\documentclass[11pt,reqno]{amsart}
\usepackage{amsfonts, amssymb, amsmath, enumerate, graphicx, bm, caption, subcaption}
\usepackage[usenames,dvipsnames]{color}
\usepackage[linktoc=page, colorlinks, linkcolor=blue, citecolor=blue]{hyperref} 

\numberwithin{equation}{section}

\captionsetup[figure]{labelfont={rm}}  


\DeclareMathOperator{\E}{\mathbb{E}}
\DeclareMathOperator{\Var}{Var}

\DeclareMathOperator*{\tr}{tr}
\DeclareMathOperator*{\rank}{rank}
\DeclareMathOperator{\vol}{vol}
\DeclareMathOperator{\conv}{conv}
\DeclareMathOperator{\diam}{diam}

\DeclareMathOperator{\Unif}{Unif}
\DeclareMathOperator{\sign}{sign}

\renewcommand{\Pr}[2][]{\mathbb{P}_{#1} \left\{ #2 \rule{0mm}{3mm}\right\}}
\newcommand{\ip}[2]{\left\langle#1,#2\right\rangle}

\def \R {\mathbb{R}}

\def \CC {\mathcal{C}}
\def \FF {\mathcal{F}}

\def \a {\alpha}

\def \e {\varepsilon}
\def \d {\delta}
\def \l {\lambda}
\def \s {\sigma}

\def \tran {\mathsf{T}}

\def \< {\langle}
\def \> {\rangle}

\def \va {\bm{a}}
\def \vb {\bm{b}}
\def \vd {\bm{d}}
\def \ve {\bm{e}}
\def \vg {\bm{g}}
\def \vh {\bm{h}}
\def \vs {\bm{s}}
\def \vt {\bm{t}}
\def \vx {\bm{x}}
\def \vu {\bm{u}}
\def \vv {\bm{v}}
\def \vy {\bm{y}}
\def \vz {\bm{z}}
\def \vX {\bm{X}}

\def \valpha {\bm{\alpha}}
\def \vnu {\bm{\nu}}

\def \veta {\bm{\eta}}

\def \xhat {\widehat{\vx}}
\def \xlin {\widehat{\vx}_{\mathrm{lin}}}
\def \Xhat {\widehat{X}}

\def \vbeta {\bm{\beta}}
\def \vbetahat {\widehat{\vbeta}}

\def \psitwo {{\psi_2}}


\newtheorem{theorem}{Theorem}[section]
\newtheorem{proposition}[theorem]{Proposition}
\newtheorem{corollary}[theorem]{Corollary}
\newtheorem{lemma}[theorem]{Lemma}

\newtheorem{definition}[theorem]{Definition}
\newtheorem{question}[theorem]{Question}

\theoremstyle{remark}
\newtheorem{remark}[theorem]{Remark}
\newtheorem{example}[theorem]{Example}


\title[]{Estimation in high dimensions: \\ a geometric perspective}

\author{Roman Vershynin}

\date{\today}
\address{Department of Mathematics, University of Michigan, 530 Church St., Ann Arbor, MI 48109, U.S.A.}
\email{romanv@umich.edu}
\thanks{Partially supported by NSF grant DMS 1265782 and USAF Grant FA9550-14-1-0009.}

\begin{document}

\begin{abstract}
This tutorial provides an exposition of a flexible geometric framework 
for high dimensional estimation problems with constraints.
The tutorial develops geometric intuition about high dimensional sets, justifies it with 
some results of asymptotic convex geometry, and demonstrates connections
between geometric results and estimation problems. The theory is illustrated
with applications to sparse recovery, matrix completion, quantization, 
linear and logistic regression and generalized linear models.
\end{abstract}

\setcounter{tocdepth}{1}

\maketitle

\tableofcontents

\newpage

\section{Introduction}

\subsection{Estimation with constraints}
This chapter provides an exposition of an emerging mathematical framework for 
high-dimensional {\em estimation problems with constraints}.
In these problems, the goal is to estimate a point $\vx$ which lies
in a certain known feasible set $K \subseteq \R^n$, from a small sample
$y_1,\ldots, y_m$ of independent observations of $\vx$.
The point $\vx$ may represent a signal in signal processing, 
a parameter of a distribution in statistics, or an unknown matrix 
in problems of matrix estimation or completion.
The feasible set $K$ is supposed to represent properties that we know 
or want to impose on $\vx$. 

The geometry of the high dimensional set $K$ is a key to understanding 
estimation problems. A powerful intuition about {\em what high dimensional 
sets look like} has been developed in the area known as {\em asymptotic convex geometry}
\cite{Ball, GBVV}.
The intuition is supported by many rigorous results, some of which can be 
applied to estimation problems. The main goals of this chapter are:

\begin{enumerate}[\quad (a)]
  \item develop geometric intuition about high dimensional sets;
  \item explain results of asymptotic convex geometry which validate
    this intuition;
  \item demonstrate connections between high dimensional geometry
	and high dimensional estimation problems.
\end{enumerate}

This chapter is not a comprehensive survey but is rather a tutorial.
It does not attempt to chart vast territories of high dimensional inference
that lie on the interface of statistics and signal processing.
Instead, this chapter proposes a useful geometric viewpoint, 
which could help us find a common mathematical 
ground for many (and often dissimilar) estimation problems.

\subsection{Quick examples}

Before we proceed with a general theory, let us mention 
some concrete examples of estimation problems that will be covered here.
A particular class of estimation problems with constraints 
is considered in the young field of {\em compressed sensing} \cite{DDEK, Kutyniok, CGLP, FR}.
There $K$ is supposed to enforce {\em sparsity}, thus $K$ usually consists of vectors
that have few non-zero coefficients. Sometimes more restrictive
{\em structured sparsity} assumptions are placed, where only certain arrangements
of non-zero coefficients are allowed \cite{BJMO, RRN}. The observations $y_i$ in compressed
sensing are assumed to be {\em linear} in $\vx$, which means that 
$y_i = \ip{\va_i}{\vx}$. Here $\va_i$ are typically i.i.d. vectors drawn from 
some known distribution in $\R^n$ (for example, normal). 

Another example of estimation problems with constraints 
is the {\em matrix completion problem}
\cite{CR, CT, KMO, Gross, SAT, Recht} where $K$ consists of matrices with {\em low rank}, and 
$y_1,\ldots,y_m$ is a sample of matrix entries. Such observations 
are still linear in $\vx$. 

In general, observations do not have to be linear; 
good examples are {\em binary observations} $y_i \in \{-1,1\}$, which satisfy
$y_i = \sign(\ip{\va_i}{\vx})$, see \cite{BB, JLBB, PV CPAM, PV DCG},
and more generally $\E y_i = \theta(\ip{\va_i}{\vx})$, see \cite{PV IEEE, ALPV, PVY}. 

In statistics, these classes of estimation problems can be interpreted
as {\em linear regression} (for linear observations with noise), 
{\em logistic regression} (for binary observations)
and {\em generalized linear models} (for more general non-linear observations). 

All these examples, and more, will be explored in this chapter. 
However, our main goal is to advance a general approach, which  
would not be tied to a particular nature of the feasible set $K$.
Some general estimation problems of this nature were considered in \cite{MPT, ALMT} 
for linear observations and in \cite{PV IEEE, PV DCG, ALPV, PVY} for non-linear observations.

\subsection{Plan of the chapter}

In Seciton~\ref{s: setup}, we introduce a general class of estimation problems
with constraints. We explain how the constraints (given by feasible set $K$) 
represent {\em low-complexity structures}, which could make it possible
to estimate $\vx$ from few observations. 

In Section~\ref{s: geometry}, 
we make a short excursion into the field of {\em asymptotic convex geometry}.
We explain intuitively the shape of high-dimensional sets $K$
and state some known results supporting this intuition.
In view of estimation problems, we especially emphasize one of these
results -- the so-called {\em $M^*$ bound}
on the size of high-dimensional sections of $K$ by a random subspace $E$.
It depends on the single geometric parameter of $K$ that quantifies the 
complexity of $K$; this quantity is called the {\em mean width}. 
We discuss mean width in some detail, 
pointing out its connections to convex geometry, stochastic processes, 
and statistical learning theory. 

In Section~\ref{s: estimation linear} we apply the $M^*$ bound to 
the general estimation problem with linear observations. 
We formulate an estimator first as a convex feasibility problem 
(following \cite{MPT}) and then as a convex optimization problem.

In Section~\ref{s: M* proof} we prove a general form of the $M^*$ 
bound. Our proof borrowed from \cite{PV DCG} is quite simple and instructive. 
Once the $M^*$ bound is stated in the language of stochastic processes, 
it follows quickly by application of symmetrization, contraction and rotation invariance.

In Section~\ref{s: noisy}, we apply the general $M^*$ bound to estimation 
problems; observations here are still linear but can be noisy.
Examples of such problems include {\em sparse recovery} problems 
and {\em linear regression} with constraints, which we explore in 
Section~\ref{s: sparse recovery}.

In Section~\ref{s: sub-gaussian}, we extend the theory from Gaussian 
to sub-gaussian observations. A sub-gaussian $M^*$ bound 
(similar to the one obtained in \cite{MPT}) is deduced from 
the previous (Gaussian) argument followed by an application of 
a deep comparison theorem of X.~Fernique and M.~Talagrand 
(see \cite{Talagrand}). 

In Section~\ref{s: exact} we pass to {\em exact recovery} results, 
where an unknown vector $\vx$ can be inferred from the 
observations $y_i$ without any error.
We present a simple geometric argument based on Y.~Gordon's 
``escape through a mesh'' theorem \cite{Gordon}. This argument was first used in this context
for sets of sparse vectors in \cite{RV CPAM}, was further developed in \cite{Stojnic, Oymak-Hassibi}
and pushed forward for general feasible sets in \cite{CRPW, ALMT, Tropp}.

In Section~\ref{s: matrix}, we explore matrix estimation problems. 
We first show how the general theory applies to a {\em low-rank matrix recovery} 
problem. Then we address a {\em matrix completion} problem with 
a short and self-contained argument from \cite{PVY}.

Finally, we pass to non-linear observations. 
In Section~\ref{s: single bit tess}, we consider {\em single-bit observations} 
$y_i = \sign\ip{\va_i}{\vx}$. Analogously to linear observations, 
there is a clear geometric interpretation for these as well. 
Namely, the estimation problem reduces in this case to a 
{\em pizza cutting problem} about random hyperplane tessellations of $K$. 
We discuss a result from \cite{PV DCG} on this problem, and we apply it to
estimation by formulating it as a feasibility problem. 

Similarly to what we did for linear observations, we replace the feasibility problem 
by optimization problem in Section~\ref{s: single bit opt}. Unlike before, such replacement 
is not trivial. We present a simple and self-contained argument from \cite{PV IEEE} about 
estimation from single-bit observations via convex optimization. 

In Section~\ref{s: general non-linear} we discuss the estimation problem 
for general (not only single-bit) observations following \cite{PVY}. 
The new crucial step of estimation is the metric projection onto the feasible set; 
this projection was studied recently in \cite{Chatterjee} and \cite{PVY}.

In Section~\ref{s: extensions}, we outline some natural extensions of the results
for general distributions and to a localized version of mean width.

\subsection{Acknowledgements}
The author is grateful to Vladimir Koltchinskii, Shahar Mendelson, Renato Negrinho, 
Robert Nowak, Yaniv Plan, Elizaveta Rebrova, Joel Tropp and especially the anonymous referees
for their helpful discussions, comments, and corrections, which 
lead to a better presentation of this chapter.

\section{High dimensional estimation problems}			\label{s: hde problems}

\subsection{Estimating vectors from random observations}			\label{s: setup}

Suppose we want to estimate an unknown vector $\vx \in \R^n$.
In signal processing, $\vx$ could be a signal to be reconstructed, while 
in statistics $\vx$ may represent a parameter of a distribution.
We assume that information about $\vx$ comes from a sample 
of independent and identically distributed observations $y_1, \ldots, y_m \in \R$,
which are drawn from a certain distribution which depends on $\vx$:
$$
y_i \sim \text{distribution} (\vx), \quad i=1,\ldots,m.
$$
So, we want to estimate $\vx \in \R^n$ from the observation vector
$$
\vy = (y_1,\ldots,y_m) \in \R^m.
$$
One example of this situation is the classical linear regression problem in statistics, 
\begin{equation}         \label{eq: linear regression}
\vy = X \vbeta + \vnu,
\end{equation}
in which one wants to estimate the coefficient vector $\vbeta$ from the observation 
vector $\vy$. We will see 
many more examples later; for now let us continue with setting up 
the general mathematical framework.

\subsection{Low complexity structures}

It often happens that we know in advance, believe in, or want to enforce,
some properties of the vector $\vx$.
We can formalize such extra information as the assumption that 
$$
\vx \in K
$$
where $K$ is some fixed and known subset of $\R^n$, a {\em feasible set}.
This is a very general and flexible assumption. At this point, we are not stipulating 
any properties of the feasible set $K$. 

To give a quick example, in the regression problem \eqref{eq: linear regression},
one often believes that $\vbeta$ 
is a sparse vector, i.e. among its coefficients only few are non-zero. 
This is important because it means that a few explanatory variables can adequately explain 
the dependent variable. So one could choose $K$ to be a set of all $s$-sparse vectors in $\R^n$ --
those with at most $s$ non-zero coordinates, for a fixed sparsity level $s \le n$.
More examples of natural feasible sets $K$ will be given later. 

Figure~\ref{fig: sampling-estimation} illustrates the estimation problem. 
Sampling can be thought of as a map taking $\vx \in K$ to $\vy \in \R^m$; 
estimation is a map from $\vy \in \R^m$ to $\xhat \in K$ and is ideally the inverse of sampling. 

\begin{figure}[htp]			
  \centering \includegraphics[height=2.6cm]{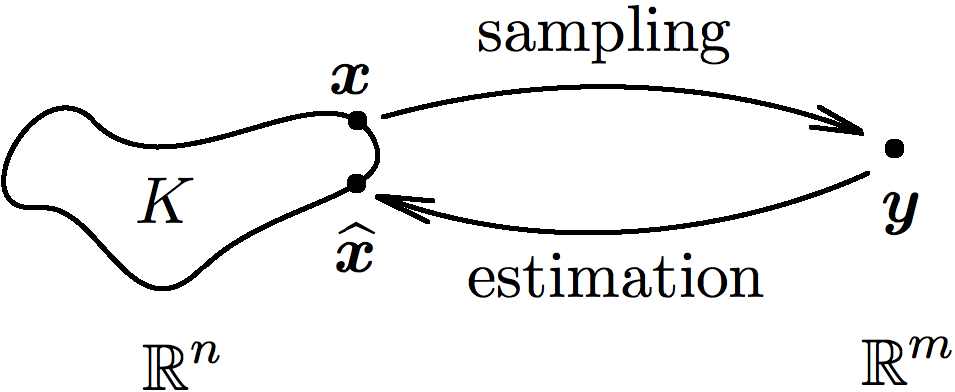} 
  \caption{Estimation problem in high dimensions}
  \label{fig: sampling-estimation}	
\end{figure}

How can a prior information encoded by $K$ help in high-dimensional estimation?
Let us start with a quick and non-rigorous argument based on the number of degrees of freedom. 
The unknown vector $\vx$ has $n$ dimensions
and the observation vector $\vy$ has $m$ dimensions. So in principle, it should 
be possible to estimate $\vx$ from $\vy$ with 
$$
m = O(n)
$$ 
observations. Moreover, this bound should be tight in general. 

Now let us add the restriction that $\vx \in K$. If $K$ happens to be {\em low-dimensional}, 
with algebraic dimension $\dim(K) = d \ll n$, then $\vx$ has $d$ degrees of freedom. 
Therefore, in this case the estimation should be possible with fewer observations, 
$$
m = O(d) = o(n).
$$

It rarely happens that feasible sets of interest literally have small algebraic dimension.
For example, the set of all $s$-sparse vectors in $\R^n$ has full dimension $n$.
Nevertheless, the intuition about low-dimensionality remains valid. 
Natural feasible sets, such as regression coefficient vectors, 
images, adjacency matrices of networks, do tend to have {\em low complexity}. 
Formally $K$ may live in an $n$-dimensional space where $n$ can be very large, 
but the actual complexity of $K$, or ``effective dimension'' 
(which will formally quantify in Section~\ref{s: eff dim}) is often much smaller. 

\medskip

This intuition motivates the following three goals, which we will discuss in detail in this chapter: 

\begin{enumerate}[\qquad 1.]
  \item Quantify the complexity of general subsets $K$ of $\R^n$.
  \item Demonstrate that  estimation can be done with few observations
    as long as the feasible set $K$ has low complexity.
  \item Design estimators that are algorithmically efficient.
\end{enumerate}

We will start by developing intuition about the geometry of sets $K$ in high dimensions.
This will take us a short excursion into high dimensional convex geometry.
Although convexity assumption for $K$ will not be imposed in most results of this chapter, 
it is going to be useful in Section~\ref{s: geometry} for developing a good intuition 
about geometry in high dimensions.

\section{An excursion into high dimensional convex geometry}		\label{s: geometry}

High dimensional convex geometry studies {\em convex bodies} $K$ in $\R^n$ for large $n$; 
those are closed, bounded, convex sets with non-empty interior. 
This area of mathematics is sometimes also called asymptotic convex geometry (referring 
to $n$ increasing to infinity) and geometric functional analysis. 
The tutorial \cite{Ball} could be an excellent first contact with this field;  
the survey \cite{GM} and books \cite{MS, Pisier, GBVV, AGM} cover more material and in more depth.

\subsection{What do high dimensional convex bodies look like?}

A central problem in high dimensional convex geometry is -- {\em what do convex 
bodies look like in high dimensions?} A heuristic answer to this question is --
a convex body $K$ usually consists of a {\em bulk} and {\em outliers}. 
The bulk makes up most of the volume of $K$, but it is usually small in diameter. 
The outliers contribute little to the volume, but they are large in diameter. 

If $K$ is properly scaled, the bulk usually looks like a Euclidean ball.
The outliers look like thin, long tentacles. This is best seen on Figure~\ref{fig: general convex}, which 
depicts V.~Milman's vision of high dimensional convex sets \cite{Milman}. This picture does 
not look convex, and there is a good reason for this. The volume in high dimensions 
scales differently than in low dimensions -- dilating of a set by the factor $2$ 
increases its volume by the factor $2^n$. This is why it is not surprising that 
the tentacles contain exponentially less volume than the bulk. Such behavior
is best seen if a picture looks ``hyperbolic''. Although not convex, pictures like 
Figure~\ref{fig: milman-convex-body}	 more accurately reflect the distribution of 
volume in higher dimensions.

\begin{figure}[htp]			
  \centering 
  \begin{subfigure}[b]{0.4\textwidth}
    \includegraphics[height=3.4cm]{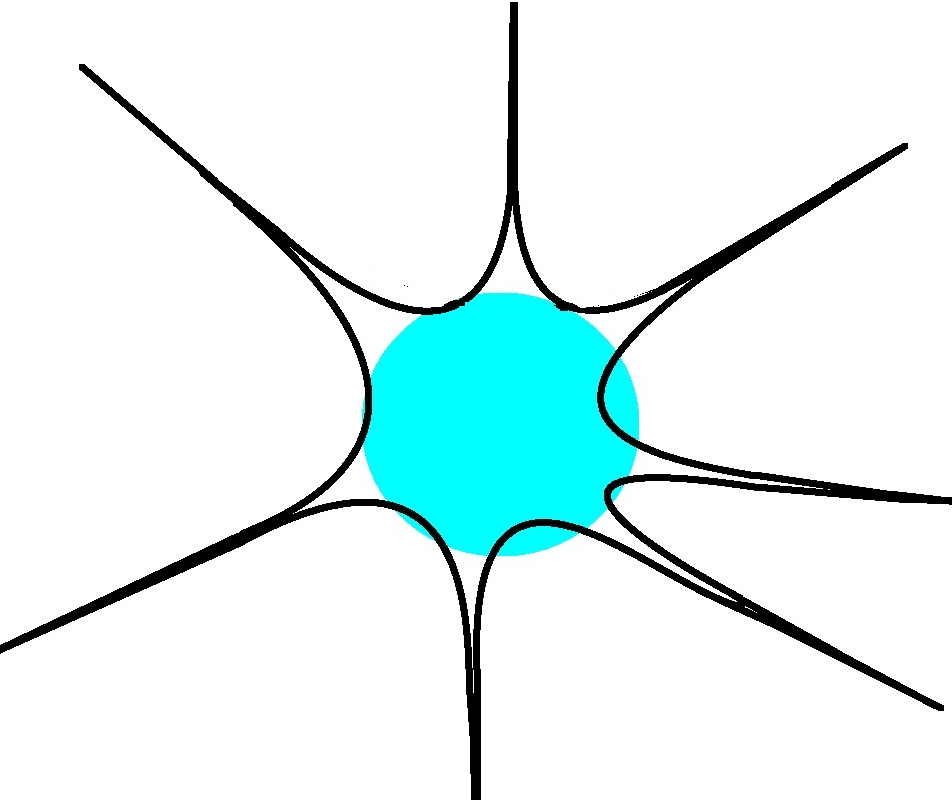} 
    \caption{A general convex set}
    \label{fig: general convex}
  \end{subfigure}
  \qquad \qquad
  \begin{subfigure}[b]{0.3\textwidth}
    \includegraphics[height=3.4cm]{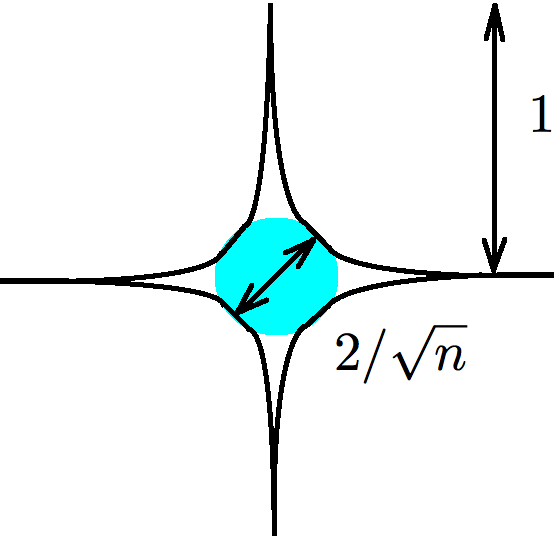} 
    \caption{The $\ell_1$ ball}
    \label{fig: ell1}
  \end{subfigure}
  \caption{V.~Milman's ``hyperbolic'' drawings of high dimensional convex sets }
  \label{fig: milman-convex-body}	
\end{figure}

\begin{example}[The $\ell_1$ ball]
  To illustrate this heuristic on a concrete example, consider the set
  $$
  K = B_1^n = \{ x \in \R^n :\; \|x\|_1 \le 1 \},
  $$ 
  i.e. the unit $\ell_1$-ball in $\R^n$. The inscribed Euclidean ball in $K$, 
  which we will denote by $B$, has diameter $2/\sqrt{n}$. 
  One can then check that volumes of $B$ and of $K$ are comparable:\footnote{Here $a_n \asymp b_n$
  means that there exists positive absolute constants $c$ and $C$ such that 
  $c a_n \le b_n \le C a_n$ for all $n$.}
  $$
  \vol_n(B)^{1/n} \asymp \vol_n(K)^{1/n} \asymp \frac{1}{n}.
  $$
  Therefore, $B$ (perhaps inflated by a constant factor) forms 
  the bulk of $K$. It is round, makes up most of the volume of $K$, but has small diameter. 
  The outliers of $K$ are thin and long tentacles protruding quite far in the coordinate directions. 
  This can be best seen in a hyperbolic drawing, see Figure~\ref{fig: ell1}.
\end{example}

\subsection{Concentration of volume}

The heuristic representation of convex bodies just described 
can be supported by some rigorous results about 
{\em concentration of volume}. 

These results assume that $K$ is {\em isotropic}, which means that 
the random vector $X$ distributed uniformly in $K$ (according to the Lebesgue measure) 
has zero mean and identity covariance: 
\begin{equation}         \label{eq: isotropy}
\E X = 0, \quad \E X X^\tran = I_n.
\end{equation}
Isotropy is just an assumption of proper scaling -- one can always make a convex body $K$ 
isotropic by applying a suitable invertible linear transformation. 

With this scaling, most of the volume of $K$ is located around the 
Euclidean sphere of radius $\sqrt{n}$. Indeed, taking traces of both sides 
of the second equation in \eqref{eq: isotropy}, we obtain 
$$
\E \|X\|_2^2 = n.
$$
Therefore, by Markov's inequality, at least $90\%$ of the volume of $K$ 
is contained in a Euclidean ball of size $O(\sqrt{n})$. 
Much more powerful concentration results are known -- the bulk of 
$K$ lies very near the sphere of radius $\sqrt{n}$, and the outliers 
have exponentially small volume. This is the content of the two major
results in high dimensional convex geometry, which we summarize in the 
following theorem. 

\begin{theorem}[Distribution of volume in high-dimensional convex sets]				\label{thm: volume}
  Let $K$ be an isotropic convex body in $\R^n$, and let $X$ be a random vector
  uniformly distributed in $K$. Then the following is true: 
  \begin{enumerate}[\qquad 1.]
    \item(Concentration of volume) For every $t \ge 1$, one has 
      $$
      \Pr{\|X\|_2 > t \sqrt{n}} \le \exp(-c t \sqrt{n}). 
      $$
    \item(Thin shell) For every $\e \in (0,1)$, one has
      $$
      \Pr{\Big| \|X\|_2 - \sqrt{n} \Big| > \e \sqrt{n}} \le C \exp(-c \e^3 n^{1/2}).
      $$
  \end{enumerate} 
  Here and later in this chapter, $C, c$ denote positive absolute constants.
\end{theorem}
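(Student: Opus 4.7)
The plan is to base both bounds on the fact that $X$ is an isotropic log-concave random vector---recall that the uniform measure on a convex body is log-concave by Pr\'ekopa--Leindler---and then to control the deviations of $\|X\|_2$ through sharp moment estimates.

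For part 1 (Paouris' concentration inequality), I would proceed via the $L_p$-centroid bodies $Z_p(K)$, defined by the support function $h_{Z_p(K)}(\theta) = (\E |\ip{X}{\theta}|^p)^{1/p}$. Isotropy gives $Z_2(K) \supseteq B_2^n$, and the heart of Paouris' argument is a volumetric estimate showing that $Z_p(K)$ grows slowly enough in $p$ to yield
\[
(\E \|X\|_2^p)^{1/p} \le C\sqrt{n} \quad\text{for all } 1 \le p \le c\sqrt{n}.
\]
Granted this moment bound, Markov's inequality gives $\Pr{\|X\|_2 > t\sqrt{n}} \le (C/t)^p$, and choosing $p \asymp \sqrt{n}$ produces the stated tail $\exp(-c t\sqrt{n})$.

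For part 2 (the thin-shell estimate), my approach would be Klartag's: first establish a polynomial variance bound of the form $\Var(\|X\|_2) \le C n^{1-c}$ for any isotropic log-concave vector, using the localization / needle-decomposition method (which reduces bounding the variance of a function against a log-concave measure to bounding its variance on one-dimensional log-concave ``needles''). Chebyshev's inequality then delivers a weak thin-shell estimate. To upgrade this polynomial tail into the exponential one with rate $\e^3 n^{1/2}$, one combines it with the Paouris tail bound from part 1 (controlling the upper deviation on the scale where Chebyshev is wasteful) and with a Borell-type reverse H\"older inequality (controlling the lower deviation), exploiting log-concavity of the one-dimensional distribution of $\|X\|_2$.

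The main obstacle is part 1: Paouris' inequality is genuinely deep, and the moment estimate $(\E\|X\|_2^p)^{1/p} \lesssim \sqrt{n}$ is not elementary---it rests on nontrivial covering and volumetric estimates for the centroid bodies together with Borell's lemma on moments of seminorms of log-concave vectors. Obtaining the correct $\e^3$ exponent in part 2 is a second delicate technical point that builds on the sharp tail from part 1. In a tutorial-style presentation, I would cite these results of Paouris, Klartag, and Gu\'edon--Milman rather than reproduce their proofs, which collectively occupy several long papers.
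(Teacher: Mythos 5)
Your proposal is consistent with the paper, which does not prove this theorem at all: it simply attributes part 1 to Paouris (with the shorter proof in the Adamczak et al.\ reference) and part 2 to Gu\'edon--Milman improving Klartag, exactly the citations you land on. Your sketch of the underlying arguments (moment bounds via $L_p$-centroid bodies for the Paouris tail, and a variance/thin-shell upgrade for part 2) is an accurate summary of those references, and your decision to cite rather than reproduce them matches the paper's treatment.
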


The concentration part of Theorem~\ref{thm: volume} is due to G.~Paouris \cite{Paouris}; 
see \cite{ALLOPT} for an alternative and shorter proof. The thin shell part is an improved 
version of a result of B.~Klartag \cite{Klartag}, which is due 
to O.~Guedon and E.~Milman \cite{Guedon-Milman}.

\subsection{Low dimensional random sections}

The intuition about bulk and outliers of high dimensional convex bodies $K$ can help us to understand
what {\em random sections} of $K$ should look like. Suppose $E$ is a random subspace of $\R^n$
with fixed dimension $d$, i.e. 
 $E$ is drawn at random from the Grassmanian manifold $G_{n,d}$ according to the Haar measure. 
What does the section $K \cap E$ look like on average? 

If $d$ is sufficiently small, then we should expect $E$ to pass through the bulk of $K$ and
miss the outliers, as those have very small volume. 
Thus, if the bulk of $K$ is a round ball,\footnote{This intuition is a good approximation to truth, 
  but it should to be corrected. 
  While concentration of volume tells us that the bulk is {\em contained} in a certain Euclidean ball
  (and even in a thin spherical shell), 
  it is not always true that the bulk {\em is} a Euclidean ball (or shell); a counterexample is 
  the unit cube $[-1,1]^n$. 
  In fact, the cube is the worst convex set in the Dvoretzky theorem, which we are about to state.}  
we should expect the section $K \cap E$ to be a {\em round ball} 
as well; see Figure~\ref{fig: milman-convex-body-section}.

\begin{figure}[htp]			
  \centering 
  \includegraphics[height=3.4cm]{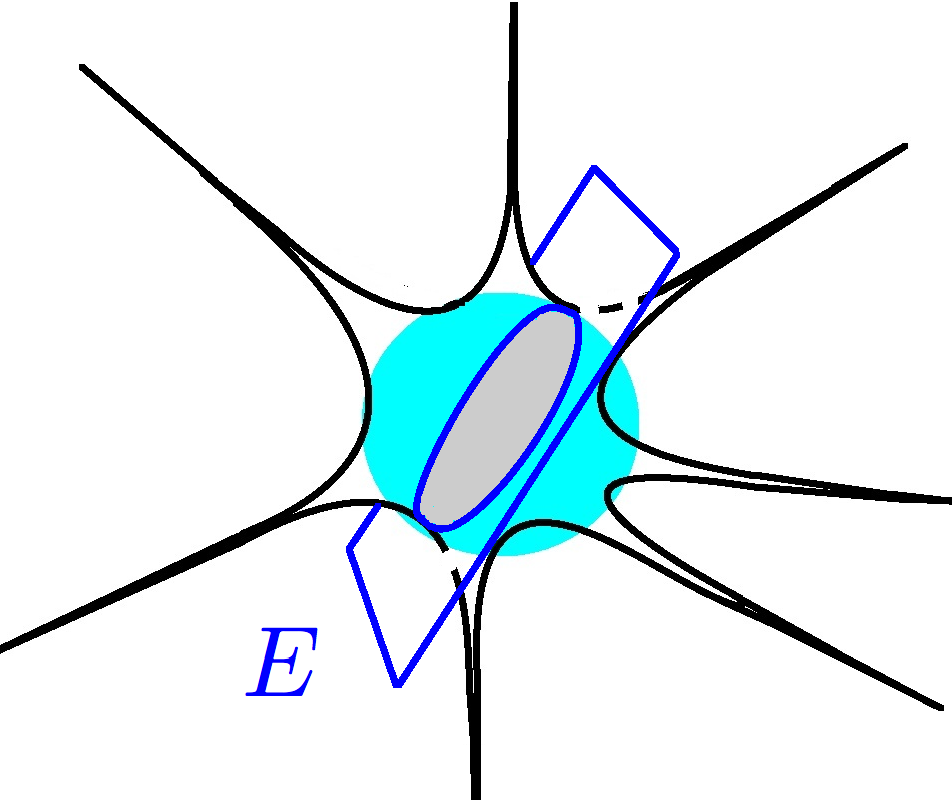} 
  \caption{Random section of a high dimensional convex set}
  \label{fig: milman-convex-body-section}	
\end{figure}

There is a rigorous result which confirms this intuition. It is known as 
Dvoretzky's theorem \cite{Dvoretzky1, Dvoretzky2}, 
which we shall state in the form of V. Milman \cite{Milman-Dvoretzky}; 
expositions of this result can be found e.g. in \cite{Pisier, GBVV}. 
Dvoretzky-Milman's theorem has laid a foundation for 
the early development of asymptotic convex geometry. 
Informally, this result says that 
random sections of $K$ of dimension $d \sim \log n$ are round with high probability. 

\begin{theorem}[Dvoretzky's theorem]					\label{thm: dvoretzky}
  Let $K$ be an origin-symmetric convex body in $\R^n$ such that the ellipsoid of maximal volume 
  contained in $K$ is the unit Euclidean ball $B_2^n$. Fix $\e \in (0,1)$.  
  Let $E$ be a random subspace of dimension 
  $d = c\e^{-2} \log n$ drawn from the Grassmanian $G_{n,d}$ according to the Haar measure. 
  Then there exists $R \ge 0$ such that with high probability (say, $0.99$) we have
  $$
  (1-\e) \, B(R) \subseteq K \cap E \subseteq (1+\e) \, B(R).
  $$
  Here $B(R)$ is the centered Euclidean ball of radius $R$ in the subspace $E$.
\end{theorem}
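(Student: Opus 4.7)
The plan is to reduce the statement to a concentration-of-measure argument for the Minkowski functional of $K$, following V.~Milman's approach. Let $\|x\|_K = \inf\{t > 0 : x/t \in K\}$, so that $K = \{x : \|x\|_K \le 1\}$. Since $B_2^n$ is the maximum-volume ellipsoid contained in $K$, in particular $B_2^n \subseteq K$, which gives $\|x\|_K \le \|x\|_2$; hence $\|\cdot\|_K$ is a $1$-Lipschitz function on $\R^n$ with respect to the Euclidean metric. Define
\[
M = \int_{S^{n-1}} \|x\|_K \, d\sigma(x)
\]
where $\sigma$ is the normalized Haar measure on $S^{n-1}$, and set $R = 1/M$. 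The conclusion $(1-\e) B(R) \subseteq K \cap E \subseteq (1+\e) B(R)$ is equivalent to showing $\bigl|\|x\|_K - M\bigr| \le \e M$ for every $x \in E \cap S^{n-1}$.

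Next I would invoke Lévy's concentration inequality on the sphere for the $1$-Lipschitz function $\|\cdot\|_K$:
\[
\sigma\bigl\{x \in S^{n-1} : \bigl|\|x\|_K - M\bigr| > \e M\bigr\} \le 2 \exp(-c \e^2 M^2 n).
\]
Fix any $d$-dimensional subspace $E_0$ and pick an $\e$-net $\NN$ of its unit sphere with $|\NN| \le (3/\e)^d$. For a random rotation $U \in O(n)$, the set $U\NN$ is a collection of uniformly distributed points on $S^{n-1}$, so a union bound yields
\[
\Pr[]{\exists y \in U\NN : \bigl|\|y\|_K - M\bigr| > \e M} \le 2 (3/\e)^d \exp(-c \e^2 M^2 n).
\]
A standard approximation step, again using the $1$-Lipschitz property of $\|\cdot\|_K$, promotes this net-level bound to a uniform bound over the whole unit sphere of $E := U E_0$ (at the cost of replacing $\e$ by a constant multiple of $\e$). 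The event has probability $\ge 0.99$ once
\[
d \le c \e^2 M^2 n / \log(1/\e),
\]
and since $E$ is a uniformly random element of $G_{n,d}$ by rotation invariance, this supplies a random subspace of the required type.

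The main obstacle, and the part that forces the bound $d \sim \e^{-2} \log n$ rather than something much smaller, is obtaining a lower bound on $M$ that captures the logarithmic gain. Without additional information one only has $M \ge 1/\sqrt{n}$, which is far too weak. The John-ellipsoid hypothesis is precisely what rescues the argument: the classical Dvoretzky--Rogers lemma extracts contact points $x_1,\dots,x_k \in \partial K \cap S^{n-1}$ (with $k$ proportional to $n$) that are nearly orthonormal, and evaluating $\|\cdot\|_K$ at a random Gaussian combination of these contact points produces the sharp estimate
\[
M \ge c \sqrt{\frac{\log n}{n}}.
\]
Substituting this into the threshold above gives the critical dimension $d \le c \e^2 \log n$ promised by the theorem, with $R = 1/M$. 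The delicate combinatorial-geometric input of Dvoretzky--Rogers is what I would expect to take the most care to set up cleanly; the concentration step itself is essentially routine once $\|\cdot\|_K$ is recognized as $1$-Lipschitz.
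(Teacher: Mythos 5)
The paper does not actually prove this theorem: it is stated as background from asymptotic convex geometry, with the proof deferred to \cite{Milman-Dvoretzky, Pisier, GBVV}. So there is no internal argument to compare against; what you have written is a reconstruction of exactly the proof those references contain, namely V.~Milman's concentration-of-measure argument. Your outline is sound: the $1$-Lipschitz property of $\|\cdot\|_K$ from $B_2^n \subseteq K$, L\'evy concentration around $M$, the $(3/\e)^d$ net with a union bound over a random rotation, and the successive-approximation step promoting the net estimate to the whole sphere of $E$ are all correct and standard (the ``equivalence'' with $(1\pm\e)B(R)$ really produces factors $(1\pm\e)^{-1}$, but this is absorbed by rescaling $\e$). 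The one place where genuine work remains is the step you yourself flag: the lower bound $M \ge c\sqrt{\log n / n}$ is not automatic and requires the Dvoretzky--Rogers lemma in John's position, typically implemented by extracting $\sim n$ well-spread contact points, passing to dual functionals $y_i$ with $\ip{y_i}{v_i} = \|v_i\|_K$, and lower-bounding $\E \max_i \ip{y_i}{\vg}$; as written this is an assertion, not a proof, so the argument is an outline rather than a complete proof. One further remark in your favor: your threshold comes out as $d \le c\,\e^2 M^2 n/\log(1/\e) \le c\,\e^2 \log n$, which is the correct form of the critical dimension (it must shrink as $\e \to 0$; the cube already shows this). The exponent $\e^{-2}$ in the paper's statement appears to be a typo for $\e^{2}$, and your derivation proves the corrected statement, not the literal one.
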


Several important aspects of this theorem are not mentioned here -- in particular how, 
for a given convex set $K$, to compute the radius $R$ and the largest dimension $d$ of round sections 
of $K$. These aspects can be found in modern treatments of Dvoretzky theorem
such as \cite{Pisier, GBVV}.

\subsection{High dimensional random sections?}		\label{s: high dim sections?}

Dvoretzky's Theorem~\ref{thm: dvoretzky} describes the shape of {\em low} dimensional random
sections $K \cap E$, those of dimensions $d \sim \log n$. 
Can anything be said about {\em high} dimensional sections, those with small codimension?  
In this more difficult regime, we can no longer expect such sections to be round.
Instead, as the codimension decreases, the random subspace $E$ becomes larger and it will probably
pick more and more of the outliers (tentacles) of $K$. The shape of such sections $K \cap E$ 
is difficult to describe. 

Nevertheless, it turns out that we can accurately predict the {\em diameter} 
of $K \cap E$. A bound on the diameter is known in asymptotic convex geometry as the low $M^*$ estimate, 
or $M^*$ bound. We will state this result in Section~\ref{s: M*} 
and prove it in Section~\ref{s: M* proof}. 
For now, let us only mention that $M^*$ bound is particularly attractive in applications 
as it depends only on two parameters -- the codimension of $E$ 
and a single geometric quantity, which informally speaking, measures the size of the bulk of $K$. 
This geometric quantity is called the {\em mean width} of $K$. We will pause briefly to discuss this
important notion.

\subsection{Mean width}

The concept of mean width captures important geometric characteristics of sets in $\R^n$.
One can mentally place it in the same category as other classical geometric quantities 
like volume and surface area.

Consider a bounded subset $K$ in $\R^n$. 
(The convexity, closedness and nonempty interior will not be imposed from now on.)
The {\em width} of $K$ in the direction of a given unit vector $\veta \in S^{n-1}$ is defined as the width of the smallest slab 
between two parallel hyperplanes with normals $\veta$ that contains $K$; see Figure~\ref{fig: mean width}. 
\begin{figure}[htp]		
  \centering \includegraphics[height=3cm]{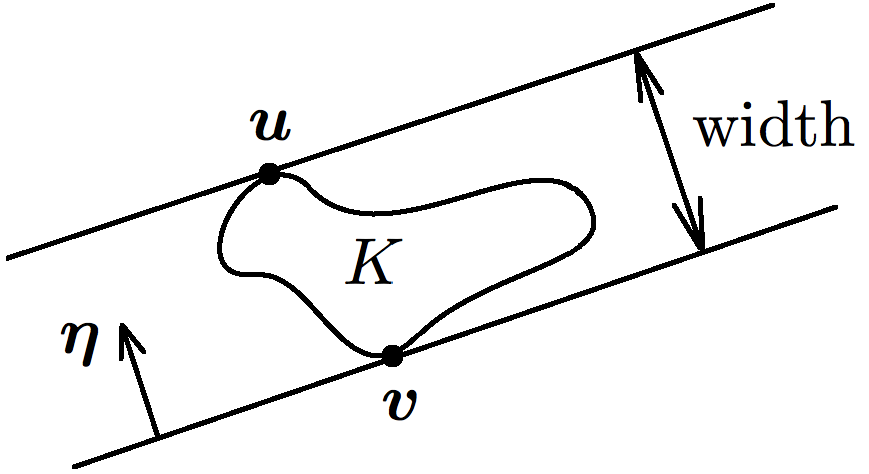} 
  \caption{Width of $K$ in the direction of $\veta$}
  \label{fig: mean width}
\end{figure}

Analytically, we can express the width in the direction of $\veta$ as  
$$
\sup_{\vu, \vv \in K} \ip{\veta}{\vu-\vv} 
= \sup_{\vz \in K-K} \ip{\veta}{\vz}
$$ 
where $K-K = \{\vu-\vv :\; \vu, \vv \in K\}$ is the Minkowski sum of $K$ and $-K$.
Equivalently, we can define the width using the standard notion of {\em support function} 
of $K$, which is $h_K(\veta) = \sup_{\vu \in K} \ip{\veta}{\vu}$, see \cite{Rockafellar}. 
The width of $K$ in the direction of $\veta$ can be expressed as $h_K(\veta) + h_{K}(-\veta)$.

Averaging over $\veta$ uniformly distributed on the sphere $S^{n-1}$, 
we can define the {\em spherical mean width} of $K$:
$$
\tilde{w}(K) := \E \sup_{\vz \in K-K} \ip{\veta}{\vz}.
$$
This notion is standard in asymptotic geometric analysis. 

In other related areas, such as high dimensional probability and 
statistical learning theory, it is more convenient to replace the {\em spherical} 
random vector $\veta \sim \Unif(S^{n-1})$
by the standard {\em Gaussian} random vector $\vg \sim N(0, I_n)$.
The advantage is that $\vg$ has independent coordinates while $\veta$ does not. 

\begin{definition}[Gaussian mean width]				\label{def: mean width}
  The Gaussian {\em mean width} of a bounded subset $K$ of $\R^n$ is defined as 
  \begin{equation}         \label{eq: mean width}
  w(K) := \E \sup_{\vu \in K-K} \ip{\vg}{\vu}, 
  \end{equation}
  where $\vg \sim N(0,I_n)$ is a standard Gaussian random vector in $\R^n$.
  We will often refer to Gaussian mean width as simply the {\em mean width}.
\end{definition}

\subsubsection{Simple properties of mean width}			\label{s: mean width prop}

Observe first 
that the Gaussian mean width is about $\sqrt{n}$ times larger than the spherical mean width. 
  To see this, using rotation invariance we realize $\veta$ as $\veta = \vg/\|\vg\|_2$. 
  Next, we recall that the direction and magnitude of a standard Gaussian random vector are independent, 
  so $\veta$ is independent of $\|\vg\|_2$. It follows that 
  $$
  w(K) = \E \|\vg\|_2 \cdot \tilde{w}(K).
  $$ 
  Further, the factor $\E \|\vg\|_2$ is of order $\sqrt{n}$; this follows, for example, 
  from known bounds on the  $\chi^2$ distribution:
  \begin{equation}         \label{eq: gaussian norm}
  c \sqrt{n} \le \E \|\vg\|_2 \le \sqrt{n}
  \end{equation}
  where $c>0$ is an absolute constant.
Therefore, the Gaussian and spherical versions of mean width 
are equivalent (up to scaling factor $\sqrt{n}$), 
so it is mostly a matter of personal preference which version to work with.
In this chapter, we will mostly work with the Gaussian version. 

Let us observe a few standard and useful properties of the mean width, 
which follow quickly from its definition. 

\begin{proposition}			\label{prop: mean width}
  The mean width is invariant under translations, orthogonal transformations, and taking convex hulls. \qed
\end{proposition}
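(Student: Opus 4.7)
The plan is to verify each of the three invariances separately, each time reducing the Gaussian mean width to elementary properties of the difference set $K-K$ or, equivalently, of the support function $h_K(\veta) = \sup_{\vu \in K} \ip{\veta}{\vu}$.

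For \emph{translation invariance}, I would simply observe that for any $\vt \in \R^n$,
\[
(K + \vt) - (K + \vt) = K - K,
\]
so the random variable inside the expectation in \eqref{eq: mean width} is literally unchanged, and hence $w(K+\vt) = w(K)$.

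For \emph{invariance under an orthogonal transformation} $U$, the idea is that $UK - UK = U(K - K)$, and then change variables $\vu = U\vv$ inside the supremum to get
\[
\sup_{\vu \in UK - UK} \ip{\vg}{\vu} = \sup_{\vv \in K-K} \ip{U^{\tran} \vg}{\vv}.
\]
Since $\vg \sim N(0,I_n)$ is rotation invariant, $U^{\tran} \vg$ has the same distribution as $\vg$, so taking expectations gives $w(UK) = w(K)$.

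For \emph{invariance under taking convex hulls}, the main point (and the one I expect to require the most care) is that $\conv(K) - \conv(K)$ is generally strictly larger than $K-K$, so one cannot proceed by equating difference sets as in the first two cases. Instead, I would use the standard fact that a linear functional attains the same supremum over a set and over its convex hull, i.e.\ $h_{\conv(K)}(\veta) = h_K(\veta)$ for every $\veta$. Writing
\[
\sup_{\vu \in K-K} \ip{\vg}{\vu} = h_K(\vg) + h_K(-\vg),
\]
and applying this identity with $K$ replaced by $\conv(K)$, I obtain pointwise in $\vg$ that
\[
\sup_{\vu \in \conv(K) - \conv(K)} \ip{\vg}{\vu} = h_K(\vg) + h_K(-\vg) = \sup_{\vu \in K-K} \ip{\vg}{\vu},
\]
and taking expectations yields $w(\conv(K)) = w(K)$. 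The only subtle point is the identity $h_{\conv(K)} = h_K$, which is standard but is the one nontrivial ingredient in the whole proposition.
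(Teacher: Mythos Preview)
Your proof is correct. The paper does not actually supply a proof of this proposition: it simply remarks that these properties ``follow quickly from its definition'' and places a \qed\ box after the statement. Your argument is exactly the natural elaboration the paper leaves implicit, and your observation that the convex-hull case cannot be handled by comparing difference sets directly (since $\conv(K)-\conv(K)$ may strictly contain $K-K$) but must instead go through the identity $h_{\conv(K)}=h_K$ is the right way to see why this case requires slightly more thought than the other two.
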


Especially useful for us will be the last property, which states that 
\begin{equation}         \label{eq: conv wK}
w(\conv(K)) = w(K).
\end{equation}
This property will come handy later, 
when we consider convex relaxations of optimization problems. 

\subsubsection{Computing mean width on examples}

Let us illustrate the notion of mean width on some simple examples.

\begin{example}
If $K$ is {\em the unit Euclidean ball} $B_2^n$ or sphere $S^{n-1}$, then 
$$
w(K) = \E \|\vg\|_2 \le \sqrt{n}
$$
and also $w(K) \ge c \sqrt{n}$, by \eqref{eq: gaussian norm}.
\end{example}

\begin{example}				\label{ex: algebraic dim}
Let $K$ be a subset of $B_2^n$ with {\em linear algebraic dimension $d$}.
Then $K$ lies in a $d$-dimensional unit Euclidean ball, so as before we have 
$$
w(K) \le 2\sqrt{d}.
$$
\end{example}

\begin{example}		\label{ex: finite set mean width}
Let $K$ be {\em a finite subset} of $B_2^n$. Then
$$
w(K) \le C\sqrt{\log|K|}.
$$ 
This follows from a known and simple computation of the expected maximum of $k=|K|$ 
Gaussian random variables.
\end{example}

\begin{example}[Sparsity]				\label{ex: sparse mean width}
Let $K$ consist of all unit $s$-sparse vectors in $\R^n$
-- those with at most $s$ non-zero coordinates:
$$
K = \{ \vx \in \R^n :\; \|\vx\|_2 = 1, \; \|\vx\|_0 \le s \}.
$$
Here $\|\vx\|_0$ denotes the number of non-zero coordinates of $\vx$.
A simple computation (see e.g. \cite[Lemma~2.3]{PV IEEE}) shows that 
$$
c \sqrt{s \log(2n/s)} \le w(K) \le C \sqrt{s \log(2n/s)}.
$$
\end{example}

\begin{example}[Low rank]				\label{ex: low rank mean width}
Let $K$ consist of $d_1 \times d_2$ matrices with unit Frobenius norm and  
rank at most $r$:
$$
K = \{ X \in \R^{d_1 \times d_2} :\; \|X\|_F = 1, \; \rank(X) \le r \}.
$$
We will see in Proposition~\ref{prop: low rank mean width},
$$
w(K) \le C \sqrt{r(d_1 + d_2)}.
$$
\end{example}

\subsubsection{Computing mean width algorithmically}

Can we estimate the mean width of a given set $K$ fast and accurately?
Gaussian concentration of measure 
(see \cite{Pisier, LT, Ledoux}) implies that, with high probability,
the random variable 
$$
w(K,\vg) = \sup_{\vu \in K-K} \ip{\vg}{\vu}
$$
is close to its expectation $w(K)$.
Therefore, to estimate $w(K)$, it is enough to generate a single 
realization of a random vector $\vg \sim N(0,I_n)$ and compute $w(K, \vg)$; 
this should produce a good estimator of $w(K)$.

Since we can convexify $K$ without changing the mean width by Proposition~\ref{prop: mean width}, 
computing this estimator is a {\em convex optimization problem} 
(and often even a linear problem if $K$ is a polytope). 

\subsubsection{Computing mean width theoretically}

Finding theoretical estimates on the mean width of a given set $K$ 
is a non-trivial problem. It has been extensively studied 
in the areas of probability in Banach spaces and stochastic processes.

Two classical results in the theory of stochastic processes -- 
Sudakov's inequality (see \cite[Theorem~3.18]{LT}) 
and Dudley's inequality (see \cite[Theorem~11.17]{LT}) -- relate the 
mean width to the metric entropy of $K$. Let $N(K, t)$ denote 
the smallest number of Euclidean balls of radius $t$ whose union covers $K$. 
Usually $N(K,t)$ is referred to as a {\em covering number} of $K$, 
and $\log N(K,t)$ is called the {\em metric entropy} of $K$. 

\begin{theorem}[Sudakov's and Dudley's inequalities]
  For any bounded subset $K$ of $\R^n$, we have 
  $$
  c \, \sup_{t>0} t \sqrt{\log N(K,t)} 
  \le w(K)
  \le C \int_0^\infty \sqrt{\log N(K,t)} \; dt.
  $$
The lower bound is Sudakov's inequality and the upper bound
is Dudley's inequality.
\end{theorem}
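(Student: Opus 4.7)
The plan is to recognize $w(K) = \E\sup_{\vu\in T}\ip{\vg}{\vu}$, with $T := K-K$, as the expected supremum of the canonical Gaussian process $X_{\vu}:=\ip{\vg}{\vu}$. The structural fact that makes Gaussian-process theory applicable is that its canonical pseudometric agrees with the Euclidean metric, $\E(X_{\vu}-X_{\vv})^2 = \|\vu-\vv\|_2^2$. The covering numbers of $T=K-K$ and of $K$ differ only by harmless constants on the relevant scales (a $t/2$-cover of $K$ yields a $t$-cover of $K-K$ of squared size, while a $t$-cover of $K-K$ translates by any fixed $\vu_0\in K$ into a $t$-cover of $K$), so I can freely substitute $N(K,\cdot)$ for $N(T,\cdot)$ at the end.

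For Dudley's upper bound I would run the classical chaining argument. Pick dyadic scales $\varepsilon_k = 2^{-k}\diam(T)$, $\varepsilon_k$-nets $T_k \subseteq T$ of size $N(T,\varepsilon_k)$, and nearest-point maps $\pi_k \colon T \to T_k$. Fixing a base point $\vu_0$ and telescoping,
\[
X_{\vu} - X_{\vu_0} \;=\; \sum_{k \ge 1}\bigl(X_{\pi_k \vu} - X_{\pi_{k-1} \vu}\bigr).
\]
Each link is a centered Gaussian of standard deviation $\le \|\pi_k\vu-\pi_{k-1}\vu\|_2 \le 3\varepsilon_{k-1}/2$, drawn from a pool of at most $N(T,\varepsilon_k)^2$ variables. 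The elementary bound $\E\max_{i\le N}|g_i| \le C\sigma\sqrt{\log N}$, applied level-wise and summed, yields
\[
\E\sup_{\vu\in T}(X_{\vu}-X_{\vu_0}) \;\le\; C\sum_{k\ge 1}\varepsilon_{k-1}\sqrt{\log N(T,\varepsilon_k)} \;\le\; C'\int_0^{\infty}\sqrt{\log N(T,t)}\,dt,
\]
the last step being the routine comparison of a dyadic sum with its integral.

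For Sudakov's lower bound I would invoke the Sudakov--Fernique comparison inequality. Given $t>0$, pick a maximal $t$-separated subset $S \subseteq T$; maximality forces $|S| \ge N(T,t)$. For distinct $\vu,\vv\in S$ one has $\E(X_{\vu}-X_{\vv})^2 = \|\vu-\vv\|_2^2 \ge t^2$, so introducing i.i.d.\ centered Gaussians $\{Y_{\vu}\}_{\vu\in S}$ of variance $t^2/2$ makes their increments satisfy $\E(Y_{\vu}-Y_{\vv})^2 = t^2 \le \E(X_{\vu}-X_{\vv})^2$ for $\vu\ne\vv$. Sudakov--Fernique then gives
\[
\E\sup_{\vu\in S}X_{\vu} \;\ge\; \E\max_{\vu\in S}Y_{\vu} \;\ge\; c\,t\sqrt{\log|S|} \;\ge\; c\,t\sqrt{\log N(T,t)},
\]
using the standard lower bound on the expected maximum of i.i.d.\ Gaussians. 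Taking the supremum over $t>0$ completes the proof.

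The only genuinely deep ingredient is the Sudakov--Fernique comparison inequality itself, whose proof rests on Slepian's lemma and a Gaussian-interpolation argument; this is the principal obstacle. The remaining pieces --- dyadic chaining, the elementary max-of-Gaussians bounds, the extraction of a maximal separated set, and the bookkeeping between $N(K,\cdot)$ and $N(K-K,\cdot)$ --- are routine once that comparison theorem is available.
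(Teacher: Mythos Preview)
The paper does not actually prove this theorem; it is quoted as a classical result with references to \cite{LT} (Theorems~3.18 and~11.17 there) for the two directions. Your outline is exactly the standard argument found in such references and is correct: Dudley via dyadic chaining together with the elementary maximal inequality for Gaussians, Sudakov via the Sudakov--Fernique comparison against an i.i.d.\ Gaussian family indexed by a maximal $t$-separated set, and the covering-number bookkeeping $N(K,t)\le N(K-K,t)\le N(K,t/2)^2$ to pass between $K$ and $T=K-K$. You are also right that the only non-elementary input is the Gaussian comparison inequality.
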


Neither Sudakov's nor Dudley's inequality are tight for all sets $K$. 
A more advanced method of {\em generic chaining} produces 
a tight (but also more complicated) estimate of the mean width
in terms of {\em majorizing measures}; see \cite{Talagrand}. 

Let us only mention some other known ways to control mean width. 
In some cases, {\em comparison inequalities} for Gaussian processes
can be useful, especially Slepian's and Gordon's; see \cite[Section~3.3]{LT}.
There is also a combinatorial approach to estimating 
the mean width and metric entropy, which is based on 
{\em VC-dimension} and its generalizations; see \cite{Mendelson, RV Annals}.

\subsubsection{Mean width and Gaussian processes}

The theoretical tools of estimating mean width we just mentioned, 
including Sudakov's, Dudley's, Slepian's and Gordon's inequalities, 
have been developed in the context of stochastic processes. 
To see the connection, consider the 
Gaussian random variables $G_{\vu} = \ip{\vg}{\vu}$ indexed by points $\vu \in \R^n$.
The collection of these random variables $(G_{\vu})_{\vu \in K-K}$ forms 
a {\em Gaussian process}, and the mean width measures the size of this process: 
$$
w(K) = \E \sup_{\vu \in K-K} G_{\vu}.
$$
In some sense, any Gaussian process can be approximated by a process
of this form. We will return to the connection between mean width 
and Gaussian processes in Section~\ref{s: M* proof} where we prove the $M^*$ bound.

\subsubsection{Mean width, complexity and effective dimension}	\label{s: eff dim}

In the context of stochastic processes, Gaussian mean width 
(and its non-gaussian variants) 
play an important role in statistical learning theory.
There it is more natural to work with classes $\FF$ of real-valued functions
on $\{1,\ldots,n\}$ than with geometric sets $K \subseteq \R^n$. 
(We identify a vector in $\R^n$ with a function on $\{1,\ldots,n\}$.)
The Gaussian mean width serves as a 
measure of complexity of a function class in statistical learning theory, see \cite{Mendelson2}.
It is sometimes called {\em Gaussian complexity} 
and is usually denoted $\gamma_2(\FF)$.

\smallskip

To get a better feeling of mean width as complexity, 
assume that $K$ lies in the unit Euclidean ball $B_2^n$.
The square of the mean width, $w(K)^2$, 
may be interpreted as the {\em effective dimension} of $K$.
By Example~\ref{ex: algebraic dim}, the effective dimension 
is always bounded by the linear algebraic dimension.   
However, unlike algebraic dimension, the effective dimension is {\em robust} -- 
a small perturbation of $K$ leads to a small change in $w(K)^2$.

\subsection{Random sections of small codimension: $M^*$ bound}		\label{s: M*}

Let us return to the problem we posed in Section~\ref{s: high dim sections?} --
bounding the diameter of random sections $K \cap E$ where $E$ 
is a high-dimensional subspace. 
The following important result in asymptotic convex geometry 
gives a good answer to this question.

\begin{theorem}[$M^*$ bound]					\label{thm: M*}
  Let $K$ be a bounded subset of $\R^n$. Let $E$ be a random 
  subspace of $\R^n$ of a fixed codimension $m$, drawn from the 
  Grassmanian $G_{n, n-m}$ according to the Haar measure. 
  Then
  $$
  \E \diam (K \cap E) \le \frac{C w(K)}{\sqrt{m}}. 
  $$
\end{theorem}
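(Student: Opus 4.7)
I would realize the random codimension-$m$ subspace concretely as $E = \ker G$, where $G$ is the $m \times n$ matrix whose rows are i.i.d.\ $\vg_i \sim N(0, I_n)$; by Gaussian rotation invariance $E$ is then Haar-distributed on $G_{n, n-m}$. Since $\diam(K \cap E) \le \sup_{\vz \in D \cap E} \|\vz\|_2$ with $D := K - K$, and since $\vz \in E$ is equivalent to $G\vz = 0$, it suffices to prove that with high probability no $\vz \in D$ with $\|\vz\|_2 \gtrsim w(K)/\sqrt{m}$ is killed by $G$.

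The heart of the argument is a Gaussian ``escape'' inequality of the form
\[
\E \inf_{\vz \in D,\, \|\vz\|_2 = r} \|G\vz\|_2 \;\ge\; r\, \E\|\vh\|_2 \;-\; w(K - K),
\]
where $\vh \sim N(0, I_m)$, so that $\E\|\vh\|_2 \asymp \sqrt{m}$ and $w(K-K) = 2\, w(K)$. Once this is in hand, the right-hand side is strictly positive as soon as $r > C w(K)/\sqrt{m}$, which forces every $\vz \in D \cap E$ to have $\|\vz\|_2 \le C w(K)/\sqrt{m}$, and integrating the resulting tail bound gives the theorem.

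To prove the Gaussian escape inequality, I would rewrite $\|G\vz\|_2 = \sup_{\vy \in S^{m-1}} \ip{\vy}{G\vz}$ and study the centered Gaussian process $X_{\vy, \vz} := \ip{\vy}{G\vz}$ on $S^{m-1} \times D$. Gaussian rotation invariance identifies the covariance structure of $X$, symmetrization against an independent copy together with the insertion of Rademacher signs decouples the $\vy$ and $\vz$ dependencies, and a contraction step bounds the resulting symmetrized process by the sum of two decoupled Gaussian processes, one over $S^{m-1}$ (contributing $\E\|\vh\|_2 \asymp \sqrt{m}$) and one over $D$ (contributing $w(K-K)$). Combining these in a Slepian/Gordon-style $\inf$--$\sup$ comparison yields the displayed bound.

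The main obstacle is orchestrating the comparison in the correct order. A naive application of Chevet's inequality bounds $\E \sup_{\vz \in D} \|G\vz\|_2$ from above but produces a useless $\sqrt{m}\,\diam(K)$ term, whereas what we actually need is a lower bound on $\inf_{\vz}\|G\vz\|_2$ over a spherical slice of $D$. The subtlety addressed in \cite{PV DCG} is to keep the radial constraint $\|\vz\|_2 = r$ active throughout the symmetrization, so that the contraction step produces the $r \E\|\vh\|_2$ term rather than a $\diam(K)$ one. This is where the interplay of symmetrization, contraction, and rotation invariance is delicate but, as the excerpt promises, ultimately quite short.
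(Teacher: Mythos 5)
Your overall route is genuinely different from the paper's: you are reproving the $M^*$ bound via Gordon's ``escape through a mesh'' / Gaussian min--max comparison, whereas the paper (following \cite{PV DCG}) never invokes a comparison theorem in the Gaussian case. The paper's proof shows that the empirical functional $\frac{1}{m}\|A\vu\|_1 = \frac{1}{m}\sum_i |\ip{\va_i}{\vu}|$ uniformly approximates $\sqrt{2/\pi}\,\|\vu\|_2$ over $\vu \in T$ with error $4\,\E\sup_{\vu\in T}|\ip{\vg}{\vu}|/\sqrt{m}$; this follows from symmetrization, contraction, and the observation that $m^{-1/2}\sum_i \e_i \va_i \sim N(0,I_n)$. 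On the kernel the $\ell_1$ functional vanishes, so $\|\vu\|_2$ must be small. That argument is a one-shot expectation bound, needs no concentration step, and extends immediately to the noisy constraint $\frac{1}{m}\|A\vu\|_1\le\varepsilon$ and to sub-gaussian rows. Your route is the one the paper attributes to Gordon \cite{Gordon}, which yields sharp constants; it is a legitimate alternative, but as written it has two real gaps.

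First, the mechanism you describe for the escape inequality does not work. Symmetrization and contraction produce \emph{upper} bounds on expected suprema of centered processes; they cannot deliver a \emph{lower} bound on $\E\inf_{\vz}\sup_{\vy}\ip{\vy}{G\vz}$. That lower bound is precisely Gordon's Gaussian min--max comparison theorem, proved by Gaussian interpolation (a Slepian-type argument with controlled signs of mixed second derivatives), not by symmetrization; so the ``heart of the argument'' is a deep imported theorem rather than something recovered from the elementary tools you list, and the attribution of this scheme to \cite{PV DCG} is incorrect --- that paper's argument is the $\ell_1$-concentration one above. Second, the passage from the escape inequality at a single radius $r$ to ``every $\vz \in D\cap E$ has $\|\vz\|_2 \le C w(K)/\sqrt m$'' is not automatic: $D=K-K$ is symmetric but not star-shaped or conic, so a point of $D\cap E$ of norm $r' > r$ does not contradict positivity of $\inf_{\vz\in D,\,\|\vz\|_2=r}\|G\vz\|_2$ (that slice may even be empty). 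The standard fix is to apply the escape theorem to the radial projection $S=\{\vz/\|\vz\|_2 : \vz\in D,\ \|\vz\|_2\ge r\}\subseteq S^{n-1}$, noting $\bar{w}(S)\le w(D)/r$, so that $S\cap E=\emptyset$ rules out all large-norm points of $D\cap E$ simultaneously; one also needs the high-probability (not merely expectation) form of Gordon's theorem, together with integration of the tail, to recover the stated bound on $\E\diam(K\cap E)$. With those repairs your proof closes, but it is a heavier argument than the paper's and does not by itself give the noisy variant of Theorem~\ref{thm: M* general}.
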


We will prove a stronger version of this result in Section~\ref{s: M* proof}.
The first variant of $M^*$ bound was found by V.~Milman \cite{Milman-Mstar1, Milman-Mstar2};
its present form is due to A.~Pajor and N.~Tomczak-Jaegermann \cite{PT}; 
an alternative argument which yields tight constants was given by Y. Gordon
\cite{Gordon}; an exposition of $M^*$ bound can be found in \cite{Pisier, LT}.

\medskip

To understand the $M^*$ bound better, 
it is helpful to recall from Section~\ref{s: mean width prop}
that $w(K)/\sqrt{n}$ is equivalent to the 
{\em spherical} mean width of $K$. Heuristically,  
the spherical mean width measures the {\em size of the bulk} of $K$. 

For subspaces $E$ of not very high dimension, where $m = \Omega(n)$, 
the $M^*$ bound states that the size of the random section $K \cap E$ 
is bounded by the spherical mean width of $K$. 
In other words, subspaces $E$ of proportional dimension 
{\em passes through the bulk} of $K$ and ignores the outliers (``tentacles''), 
just as Figure~\ref{fig: milman-convex-body-section} illustrates. 
But when the dimension of the subspace $E$ grows toward $n$
(so the codimension $m$ becomes small), the diameter
of $K \cap E$ also grows by a factor of $\sqrt{n/m}$.
This gives a precise control of how $E$ in this case 
{\em interferes with the outliers} of $K$.

\section{From geometry to estimation: linear observations}	
\label{s: estimation linear}

Having completed the excursion into geometry,
we can now return to the high-dimensional estimation problems
that we started to discuss in Section~\ref{s: hde problems}.
To recall, our goal is to estimate an unknown vector
$$
\vx \in K \subseteq \R^n
$$
that lies in a known feasible set $K$, from 
a random observation vector 
$$
\vy = (y_1,\ldots,y_m) \in \R^m,
$$
whose coordinates $y_i$ are random i.i.d. observations 
of $x$. 

So far, we have not been clear about possible distributions of the observations $y_i$.
In this section, we will study perhaps the simplest 
model -- {\em Gaussian linear observations}. 
Consider i.i.d. standard Gaussian vectors
$$
\va_i  \sim N(0,I_n)
$$
and define
$$
y_i = \ip{\va_i}{\vx}, \quad i=1,\ldots,m.
$$
Thus the observation vector $\vy$ depends linearly on $\vx$. This is best expressed 
in a matrix form: 
$$
\vy = A \vx.
$$
Here $A$ in an $m \times n$ Gaussian random matrix, which means that
the entires of $A$ are i.i.d. $N(0,1)$ random variables; the vectors $\va_i$ 
form the rows of $A$.

The interesting regime is when the number of observations is 
smaller than the dimension, i.e. when $m < n$. In this regime, the
problem of estimating $\vx \in \R^n$ from $\vy \in \R^m$ is ill posed. 
(In the complementary regime, where $m \ge n$, the linear system $\vy = A \vx$
is well posed since $A$ has full rank almost surely, so the solution is trivial.)

\subsection{Estimation based on $M^*$ bound}		\label{s: based on M*}

Recall that we know two pieces of information about $\vx$: 
\begin{enumerate}[\qquad 1.]
  \item $\vx$ lies in a known random affine subspace $\{ \vx' :\; A\vx' = \vy \}$;
  \item $\vx$ lies in a known set $K$. 
\end{enumerate}
Therefore, a good estimator of $\vx$ can be obtained by picking any 
vector $\xhat$ from the {\em intersection of these two sets}; see Figure~\ref{fig: estimation-M}. 
Moreover, since just these
two pieces of information about $\vx$ are available, 
such estimator is best possible in some sense.

\begin{figure}[htp]			
  \centering \includegraphics[height=2.1cm]{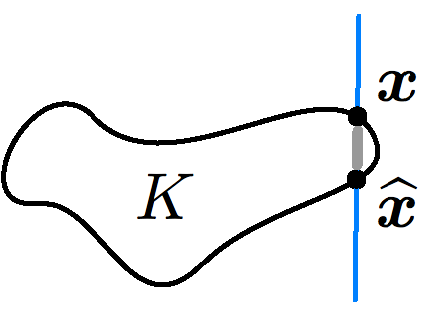} 
  \caption{Estimating $\vx$ by any vector $\xhat$ in the intersection of $K$ with 
    the affine subspace $\{ \vx' :\; A\vx' = \vy \}$}
  \label{fig: estimation-M}	
\end{figure}

How good is such estimate? The maximal error is, of course, the distance
between two farthest points in the intersection of $K$ with the 
affine subspace $\{ \vx' :\; A\vx' = \vy \}$. This distance in turn equals the 
diameter of the section of $K$ by this random subspace. But this diameter
is controlled by $M^*$ bound, Theorem~\ref{thm: M*}. Let us put 
together this argument more rigorously.

\medskip

In the following theorem, the setting is the same as above: $K \subset \R^n$ 
is a bounded subset, $\vx \in K$ is an unknown vector 
and $\vy = A\vx$ is the observation vector, where $A$ is an $m \times n$ Gaussian matrix. 

\begin{theorem}[Estimation from linear observations: feasibility program]	
		\label{thm: estimation feasibility}
  Choose $\xhat$ to be any vector satisfying
  \begin{equation}         \label{eq: feasibility}
  \xhat \in K \quad \text{and} \quad A\xhat = \vy.
  \end{equation}
  Then 
  $$
  \E \sup_{\vx \in K} \|\xhat-\vx\|_2 \le \frac{C w(K)}{\sqrt{m}}. 
  $$
\end{theorem}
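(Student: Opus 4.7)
The plan is to reduce the statement directly to the $M^*$ bound of Theorem~\ref{thm: M*}. The key observation is that if $\xhat$ satisfies \eqref{eq: feasibility}, then $A(\xhat - \vx) = \vy - \vy = 0$, and since both $\xhat$ and $\vx$ belong to $K$,
$$\xhat - \vx \;\in\; (K - K) \cap \ker(A).$$
Since $0$ lies in this set, its elements all have norm at most its diameter, giving the deterministic bound
$$\sup_{\vx \in K} \|\xhat - \vx\|_2 \;\le\; \diam\bigl((K - K) \cap \ker(A)\bigr).$$

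Next I would identify the distribution of $\ker(A)$. Because the entries of $A$ are i.i.d.\ standard Gaussian, the law of $A$ is right-rotation invariant: $A$ and $A O$ have the same distribution for every orthogonal $O$. Consequently $\ker(A)$ and $O^\tran \ker(A)$ are equidistributed, which forces $\ker(A)$ to be Haar-distributed on the Grassmannian $G_{n,n-m}$. (Almost surely $A$ has full row rank, so $\dim \ker(A) = n - m$ exactly.)

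At this point I would apply Theorem~\ref{thm: M*} with $K$ replaced by the symmetric set $K - K$. A short calculation using the identity $\sup_{\vu \in S + T}\ip{\vg}{\vu} = \sup_{\vu \in S}\ip{\vg}{\vu} + \sup_{\vu \in T}\ip{\vg}{\vu}$ together with the symmetry $-(K - K) = K - K$ gives $w(K - K) = 2\, w(K)$. Combining this with the previous step,
$$\E \diam\bigl((K - K) \cap \ker(A)\bigr) \;\le\; \frac{C\, w(K - K)}{\sqrt{m}} \;=\; \frac{2C\, w(K)}{\sqrt{m}},$$
and the conclusion follows after renaming the constant.

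The only substantive content is hidden in the $M^*$ bound itself; the theorem is essentially a direct corollary. The spots most likely to be miswritten are the bookkeeping items---verifying that $\ker(A)$ is genuinely Haar-distributed on $G_{n,n-m}$ and tracking the factor $2$ between $w(K-K)$ and $w(K)$---but neither is a genuine obstacle.
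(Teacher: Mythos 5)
Your proposal is correct and follows essentially the same route as the paper: reduce to the $M^*$ bound applied to $K-K$ and $E=\ker(A)$, note that $\xhat-\vx\in(K-K)\cap\ker(A)$, use rotation invariance to identify $\ker(A)$ as Haar-distributed on $G_{n,n-m}$, and track the factor relating $w(K-K)$ to $w(K)$. The only (immaterial) difference is that you compute $w(K-K)=2w(K)$ exactly where the paper just records the inequality $w(K-K)\le 2w(K)$.
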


\begin{proof}
We apply the $M^*$ bound, Theorem~\ref{thm: M*}, for the set
$K-K$ and the subspace $E = \ker(A)$. Rotation invariance
of Gaussian distribution implies that $E$ is uniformly distributed 
in the Grassmanian $G_{n,n-m}$, as required by the $M^*$ bound.
Moreover, it is straightforward to check that $w(K-K) \le 2 w(K)$.
It follows that
$$
\E \diam ((K-K) \cap E) \le \frac{C w(K)}{\sqrt{m}}. 
$$
It remains to note that since $\xhat, \vx \in K$ and $A\xhat = A\vx = \vy$, 
we have $\xhat-\vx \in (K-K) \cap E$.
\end{proof}

The argument we just described was first suggested by 
S.~Mendelson, A.~Pajor and N.~Tomczak-Jaegermann
\cite{MPT}.

\subsection{Estimation as an optimization problem}		\label{s: optimization}

Let us make one step forward and replace the feasibility program \eqref{eq: feasibility}
by a more flexible {\em optimization} program. 

For this, let us make an additional (but quite mild) assumption that $K$ has non-empty 
interior and is {\em star-shaped}. 
Being star-shaped means that together
with each point, the set $K$ contains the segment joining that point to the origin; 
in other words, 
$$
t K \subseteq K \quad \text{for all } t \in [0,1].
$$
For such set $K$, let us revise the feasibility program \eqref{eq: feasibility}.
Instead of intersecting a fixed set $K$ with the affine subspace $\{ \vx' :\; A\vx' = \vy \}$, 
we may {\em blow up} $K$ (i.e. consider a dilate $tK$ with increasing $t \ge 0$) 
until it touches that subspace. Choose $\xhat$ to be the touching point, 
see Figure~\ref{fig: estimation-optimization}. 
\begin{figure}[htp]			
  \centering \includegraphics[height=2cm]{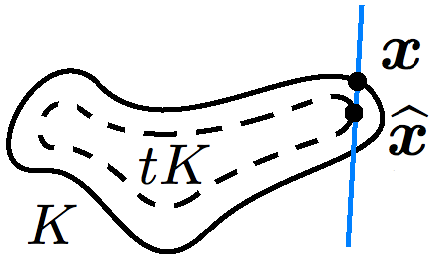} 
  \caption{Estimating $\vx$ by blowing up $K$ until it touches the  
    affine subspace $\{ \vx' :\; A\vx' = \vy \}$}
  \label{fig: estimation-optimization}	
\end{figure}
The fact that $K$ is star-shaped implies that $\xhat$ still belongs to $K$
and (obviously) the affine subspace; thus $\xhat$ satisfies the same error bound 
as in Theorem~\ref{thm: estimation feasibility}.

To express this estimator analytically, it is convenient to use the notion of 
{\em Minkowski functional} of $K$, which associates to each point 
$\vx \in \R^n$ a non-negative number $\|\vx\|_K$ defined by the rule
$$
\|\vx\|_K = \inf \big\{ \l > 0 :\; \l^{-1} \vx \in K \big\}.
$$
Miknowski functionals, also called {\em gauges}, are standard notions in 
geometric functional analysis and convex analysis. Convex analysis textbooks
such as \cite{Rockafellar} offer thorough treatments of this concept.  
We just mention here a couple elementary properties.
First, the function $\vx \mapsto \|\vx\|_K$ is continuous on $\R^n$ and 
it is positive homogeneous (that is, $\|a\vx\|_K = a \|\vx\|_K$ for $a>0$). 
Next, a closed set $K$ is the $1$-sublevel set of its Minkowski functional, 
that is 
$$
K = \{x: \|\vx\|_K \le 1\}.
$$
A typical situation to think of is when $K$ is a symmetric convex body 
(i.e. $K$ is closed, bounded, has non-empty interior and is origin-symmetric);
then $\|\vx\|_K$ defines a {\em norm} on $\R^n$ with $K$ being the unit ball. 

Let us now accurately state an optimization version of Theorem~\ref{thm: estimation feasibility}. 
It is valid for an arbitrary bounded star-shaped set $K$ with non-empty interior.

\begin{theorem}[Estimation from linear observations: optimization program]	
		\label{thm: estimation optimization}
  Choose $\xhat$ to be a solution of the program 
  \begin{equation}         \label{eq: optimization}
  \text{minimize } \|\vx'\|_K \quad \text{subject to} \quad A\vx' = \vy.
  \end{equation}
  Then 
  $$
  \E \sup_{\vx \in K}\|\xhat-\vx\|_2 \le \frac{C w(K)}{\sqrt{m}}. 
  $$
\end{theorem}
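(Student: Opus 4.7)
The plan is to reduce this optimization version directly to the feasibility version, Theorem~\ref{thm: estimation feasibility}, by verifying that any minimizer $\xhat$ of \eqref{eq: optimization} automatically satisfies the feasibility conditions \eqref{eq: feasibility}. Once this is established, the conclusion is immediate from the previous theorem.

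First, I would record the two elementary facts about the Minkowski functional that are needed. Since $K$ is star-shaped with non-empty interior, the functional $\|\cdot\|_K$ is well-defined and finite on $\R^n$, and the star-shape property gives $K = \{\vz : \|\vz\|_K \le 1\}$ (at least after a harmless closure — if one wishes to be careful, one can assume $K$ is closed or argue via a limit). In particular, for the true signal $\vx \in K$ we have $\|\vx\|_K \le 1$.

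Next, observe that $\vx$ itself is a feasible point for the program \eqref{eq: optimization}, since $A\vx = \vy$ by definition of the observation vector. Therefore the optimal value satisfies
\[
\|\xhat\|_K \;\le\; \|\vx\|_K \;\le\; 1,
\]
which, by the sublevel-set description of $K$, forces $\xhat \in K$. Combined with the constraint $A\xhat = \vy$ built into the program, we conclude that $\xhat$ meets both requirements of \eqref{eq: feasibility}.

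At this point the theorem follows by invoking Theorem~\ref{thm: estimation feasibility} verbatim, yielding
\[
\E \sup_{\vx \in K} \|\xhat - \vx\|_2 \;\le\; \frac{C w(K)}{\sqrt{m}}.
\]
There is no real obstacle: the only thing to be slightly careful about is the star-shape hypothesis, which is precisely what is needed to ensure that shrinking $\vx$ keeps it in $K$ and hence that $\|\vx\|_K \le 1$ — without this, a minimizer of the Minkowski functional subject to $A\vx' = \vy$ need not lie in $K$ at all. The non-empty interior assumption is used only so that $\|\cdot\|_K$ is finite and the program is non-degenerate. No additional geometric input (such as a fresh application of the $M^*$ bound) is required beyond what was already used in the feasibility version.
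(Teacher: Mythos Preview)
Your proof is correct and follows exactly the same approach as the paper: verify that the minimizer $\xhat$ satisfies $\|\xhat\|_K \le \|\vx\|_K \le 1$ (since $\vx$ is feasible for the program), hence $\xhat \in K$, and then invoke Theorem~\ref{thm: estimation feasibility}. The additional commentary on the role of the star-shape and non-empty interior hypotheses is accurate and matches the paper's setup.
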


\begin{proof}
It suffices to check that $\xhat \in K$; the conclusion would then follow 
from Theorem~\ref{thm: estimation feasibility}.
Both $\xhat$ and $\vx$ satisfy the linear constraint $A\vx' = \vy$. 
Therefore, by choice of $\xhat$, we have
$$
\|\xhat\|_K \le \|\vx\|_K \le 1;
$$
the last inequality is nothing else than our assumption that $\vx \in K$.
Thus $\xhat \in K$ as claimed.
\end{proof}

\subsection{Algorithmic aspects: convex programming} \label{s: algorithmic}

What does it take to solve the optimization problem \eqref{eq: optimization} algorithmically?
If the feasible set $K$ is convex, then \eqref{eq: optimization} is a {\em convex program}. 
In this case, to solve this problem numerically one may tap into an array of available 
convex optimization solvers, in particular interior-point methods \cite{Bental-Nemirovski} and 
proximal-splitting algorithms \cite{Bauschke-Combettes}.

Further, if $K$ is a polytope, then \eqref{eq: optimization} can be cast as 
a {\em linear program}, which widens an array of algorithmic possibilities even further. 
For a quick preview, let us mention that examples of the latter kind will be discussed in detail in Section~\ref{s: sparse recovery},
where we will use $K$ to enforce {\em sparsity}. We will thus choose $K$ to be a ball of $\ell_1$ norm in $\R^n$, 
so the program \eqref{eq: optimization} will minimize $\|\vx'\|_1$ subject to $A\vx' = \vy$. 
This is a typical linear program in the area of compressed sensing.

If $K$ is {\em not} convex, then we can convexify it, thereby replacing $K$ with its convex hull
$\conv(K)$. Convexification does not change the mean width according to 
the remarkable property \eqref{eq: conv wK}.
Therefore, the generally non-convex problem \eqref{eq: optimization} can be relaxed to the 
convex program 
\begin{equation}         \label{eq: estimation convex}
\text{minimize } \|\vx'\|_{\conv(K)} \quad \text{subject to} \quad A\vx' = \vy,
\end{equation}
without compromising the guarantee of estimation stated in 
Theorem~\ref{thm: estimation optimization}. The solution $\xhat$ 
of the convex program \eqref{eq: estimation convex} satisfies
$$
\E \sup_{\vx \in K} \|\xhat-\vx\|_2 
\le \E \sup_{\vx \in \conv(K)} \|\xhat-\vx\|_2
\le \frac{C w(\conv(K))}{\sqrt{m}}
= \frac{C w(K)}{\sqrt{m}}. 
$$

Summarizing, we see that in any case, whether $K$ is convex or not,
the estimation problem reduces to solving an algorithmically tractable convex program. 
Of course, one needs to be able to compute $\|\vz\|_{\conv(K)}$
algorithmically for a given vector $\vz \in \R^n$. This is possible
for many (but not all) feasible sets $K$.

\subsection{Information-theoretic aspects: effective dimension}			\label{s: effective dim}

If we fix a desired error level, for example if we aim for
$$
\E \sup_{\vx \in K} \|\xhat-\vx\|_2 \le 0.01,
$$
then 
$$
m \sim w(K)^2
$$
observations will suffice. The implicit constant factor here is determined 
by the desired error level.

Notice that this result is {\em uniform}. By Markov's inequality, with probability, 
say $0.9$ in $A$ (which determines the observation model)
the estimation is accurate simultaneously for all vectors $\vx \in K$.
Moreover, as we observed in Section~\ref{s: whp},  
the actual probability is much better than $0.9$; it converges to $1$ exponentially fast in 
the number of observations $m$.

The square of the mean width, $w(K)^2$, can be thought of an 
{\em effective dimension} of the feasible set $K$, as we pointed out in 
Section~\ref{s: eff dim}. 

We can summarize our findings as follows. 

\begin{quote}
{\em Using convex programming, one can estimate a vector $\vx$ 
in a general feasible set $K$ from $m$ random linear observations.
A sufficient number of observations $m$ 
is the same as the effective dimension of $K$ (the mean width squared),
up to a constant factor.}
\end{quote}

\section{High dimensional sections: proof of a general $M^*$ bound}		\label{s: M* proof}

Let us give a quick proof of the $M^*$ bound, Theorem~\ref{thm: M*}.
In fact, without much extra work we will be able to derive a more general result
from \cite{PV DCG}. 
First, it would allow us to treat noisy observations of the form $\vy = A\vx + \vnu$. 
Second, it will be generalizable for non-gaussian observations.

\begin{theorem}[General $M^*$ bound]				\label{thm: M* general}
  Let $T$ be a bounded subset of $\R^n$. 
  Let $A$ be an $m \times n$ Gaussian random matrix (with i.i.d. $N(0,1)$ entries).  
  Fix $\e \ge 0$ and consider the set 
  \begin{equation}         \label{eq: Te}
  T_\e := \Big\{ \vu \in T :\; \frac{1}{m} \|A\vu\|_1 \le \e \Big\}.
  \end{equation}
  Then\footnote{The conclusion \eqref{eq: M* general} is stated with the convenetion 
    that $\sup_{\vu \in T_\e} \|u\|_2 = 0$ whenever $T_\e = \emptyset$.}
  \begin{equation}         \label{eq: M* general}
  \E \sup_{\vu \in T_\e} \|\vu\|_2 
  \le \sqrt{\frac{8\pi}{m}} \, \E \sup_{\vu \in T} |\ip{\vg}{\vu}| + \sqrt{\frac{\pi}{2}} \, \e,
  \end{equation}
  where $g \sim N(0,I_n)$ is a standard Gaussian random vector in $\R^n$.
  \end{theorem}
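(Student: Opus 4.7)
The plan is to reduce the statement to a one-sided uniform deviation inequality for the empirical process $u \mapsto \frac{1}{m}\|Au\|_1$, and then dispatch that deviation by the textbook three-step procedure: symmetrization, contraction, and rotation invariance of the Gaussian measure.

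The starting point is the identity $\E|\ip{\vg}{\vu}| = \sqrt{2/\pi}\,\|\vu\|_2$ for $\vg \sim N(0,I_n)$. Since the rows $\va_i$ of $A$ are i.i.d. copies of $\vg$, we have $\E \frac{1}{m}\|A\vu\|_1 = \sqrt{2/\pi}\,\|\vu\|_2$. For any $\vu \in T_\e$, the defining inequality $\frac{1}{m}\|A\vu\|_1 \le \e$ therefore gives
\begin{equation*}
\sqrt{\tfrac{2}{\pi}}\,\|\vu\|_2 \;\le\; \e + \Bigl(\E\tfrac{1}{m}\|A\vu\|_1 - \tfrac{1}{m}\|A\vu\|_1\Bigr)
\;\le\; \e + \sup_{\vv \in T}\Bigl(\E\tfrac{1}{m}\|A\vv\|_1 - \tfrac{1}{m}\|A\vv\|_1\Bigr).
\end{equation*}
Taking the supremum over $\vu \in T_\e$ and then the expectation, the task is reduced to showing
\begin{equation*}
\E\sup_{\vv \in T}\Bigl(\E\tfrac{1}{m}\|A\vv\|_1 - \tfrac{1}{m}\|A\vv\|_1\Bigr) \;\le\; \frac{4}{\sqrt{m}}\,\E\sup_{\vu \in T}|\ip{\vg}{\vu}|.
\end{equation*}
Combining with the previous display and multiplying through by $\sqrt{\pi/2}$ then yields the claimed $\sqrt{8\pi/m}$ and $\sqrt{\pi/2}$ coefficients.

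To prove the displayed empirical-process bound, I would first apply the standard symmetrization inequality to introduce an independent Rademacher sequence $(\e_i)$ and get the upper bound $\frac{2}{m}\E\sup_{\vv \in T}\sum_i \e_i|\ip{\va_i}{\vv}|$. Next I would apply the Ledoux–Talagrand contraction principle to the 1-Lipschitz function $t \mapsto |t|$ (which vanishes at $0$) to remove the absolute values, picking up another factor of $2$, to obtain $\frac{4}{m}\E\sup_{\vv \in T}\sum_i \e_i \ip{\va_i}{\vv} = \frac{4}{m}\E\sup_{\vv \in T}\ip{\sum_i \e_i \va_i}{\vv}$. Finally, rotation invariance of the Gaussian vectors $\va_i$ (each $\e_i \va_i$ is again $N(0,I_n)$ and independent across $i$) gives $\sum_i \e_i \va_i \stackrel{d}{=} \sqrt{m}\,\vg$, so the bound collapses to $\frac{4}{\sqrt{m}}\E\sup_{\vv \in T}\ip{\vg}{\vv}$, which is at most $\frac{4}{\sqrt{m}}\E\sup_{\vv \in T}|\ip{\vg}{\vv}|$.

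The one delicate point — and the place where I would want to be careful — is the contraction step: one must verify that the version of the contraction principle being invoked applies to the nonsymmetric sup over $T$ (not $T \cup -T$) and correctly accounts for the absolute-value map, since this is where the factor of $2$ enters that eventually produces the $\sqrt{8\pi}$ rather than $\sqrt{2\pi}$ in the final constant. Everything else is routine bookkeeping: track the constants through symmetrization ($\times 2$), contraction ($\times 2$), rotation invariance ($\times 1/\sqrt{m}$), and the conversion factor $\sqrt{\pi/2}$ from $\|\vu\|_2$ to $\E|\ip{\vg}{\vu}|$.
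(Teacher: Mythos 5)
Your proposal is correct and follows essentially the same route as the paper: reduce to a uniform deviation inequality for $\vu \mapsto \frac{1}{m}\|A\vu\|_1$ around its mean $\sqrt{2/\pi}\,\|\vu\|_2$, then bound it by symmetrization, contraction, and the observation that $\frac{1}{\sqrt{m}}\sum_i \e_i \va_i \sim N(0,I_n)$, with identical constants. The only cosmetic difference is that you use a one-sided deviation bound where the paper states the two-sided version with absolute values, and your flagged worry about the contraction step is resolved exactly as in the paper, by passing to $\E\sup_{\vv\in T}|\ip{\vg}{\vv}|$ at the end.
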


To see that this result contains the classical $M^*$ bound, Theorem~\ref{thm: M*}, 
we can apply it for $T=K-K$, $\e=0$, and identify $\ker(A)$ with $E$. 
In this case, 
$$
T_\e = (K-K) \cap E.
$$ 
It follows that $T_\e \supseteq (K \cap E) - (K \cap E)$, 
so the left hand side in \eqref{eq: M* general} is bounded below by $\diam(K \cap E)$.
The right hand side in \eqref{eq: M* general} by symmetry equals $\sqrt{8\pi/m} \, w(K)$.
Thus, we recover Theorem~\ref{thm: M*} with $C=\sqrt{8\pi}$.

\bigskip

Our proof of Theorem~\ref{thm: M* general} will be based on  
two basic tools in the theory of {\em stochastic processes} --  
symmetrization and contraction.

A stochastic process is simply a collection of random variables $(Z(t))_{t \in T}$
on the same probability space. The index space $T$ can be arbitrary; 
it may be a time interval (such as in Brownian motion) or a subset of $\R^n$
(as will be our case).
To avoid measurability issues, we can assume that $T$ is finite 
by discretizing it if necessary. 

\begin{proposition}					\label{prop: symmetrization contraction}
  Consider a finite collection of stochastic processes $Z_1(t), \ldots, Z_m(t)$
  indexed by $t \in T$. Let $\e_i$ be independent Rademacher random variables
  (that is, $\e_i$ independently take values $-1$ and $1$ with probabilities $1/2$ each).
  Then we have the following. 
  \begin{enumerate}[(i)]
    \item (Symmetrization) 
      $$
      \E \sup_{t \in T} \Big| \sum_{i=1}^m \big[ Z_i(t) - \E Z_i(t) \big] \Big|
      \le 2 \E \sup_{t \in T} \Big| \sum_{i=1}^m \e_i Z_i(t) \Big|.
      $$
    \item (Contraction)
      $$
      \E \sup_{t \in T} \Big| \sum_{i=1}^m \e_i |Z_i(t)| \Big|
      \le 2 \E \sup_{t \in T} \Big| \sum_{i=1}^m \e_i Z_i(t) \Big|.
      $$
    \end{enumerate}
\end{proposition}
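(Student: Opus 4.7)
The plan is to prove parts (i) and (ii) separately, using the classical toolkit from the theory of Rademacher processes.

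For part (i), I would run the standard symmetrization argument. First, introduce independent copies $(Z_1'(t), \ldots, Z_m'(t))_{t \in T}$ with the same joint distribution as $(Z_i(t))_{i,t}$, defined on an auxiliary probability space with expectation operator $\E'$ and independent of everything else, including the Rademacher variables $\e_i$. Since $\E Z_i(t) = \E' Z_i'(t)$ for every $t$ and $i$, we may rewrite
$$
\E \sup_t \Big| \sum_i (Z_i(t) - \E Z_i(t)) \Big|
= \E \sup_t \Big| \E'\big[ \sum_i (Z_i(t) - Z_i'(t)) \big] \Big|,
$$
and push $\E'$ outside the supremum by Jensen's inequality, upper-bounding the right-hand side by $\E\E' \sup_t |\sum_i (Z_i(t) - Z_i'(t))|$. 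The increments $Z_i(t) - Z_i'(t)$ are symmetric in $i$, so the joint law of $(Z_i - Z_i')_{i=1}^m$ is invariant under multiplying each coordinate by an independent Rademacher sign. Inserting such signs and then splitting by the triangle inequality produces two identical terms, each giving $\E\sup_t |\sum_i \e_i Z_i(t)|$, which yields the factor of $2$.

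For part (ii), I would first condition on $(Z_i(t))_{i,t}$ and treat $a_i(t) := Z_i(t)$ as deterministic functions of $t$. The statement then reduces to the Ledoux--Talagrand contraction principle: for any $1$-Lipschitz $\phi: \R \to \R$ with $\phi(0) = 0$ and any bounded $S \subseteq \R^n$,
$$
\E_\e \sup_{a \in S} \Big| \sum_i \e_i \phi(a_i) \Big|
\le 2\, \E_\e \sup_{a \in S} \Big| \sum_i \e_i a_i \Big|.
$$
Applying this with $\phi(x) = |x|$ (which is $1$-Lipschitz and vanishes at $0$) and $S = \{(Z_i(t))_{i=1}^m : t \in T\}$ gives the desired bound after averaging over the conditioning.

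The main obstacle is (ii): symmetrization is essentially a one-paragraph exercise, whereas the contraction principle is genuinely harder. If I wanted a self-contained treatment I would have to reproduce the short iterative argument that conditions on all $\e_j$ with $j \neq i$, exploits the fact that the conditional supremum is convex in $\e_i$, and swaps $\e_i$ with $-\e_i$ to "peel" $\phi_i$ off one coordinate at a time. In a tutorial I would more likely cite Ledoux--Talagrand as a black box and focus the exposition on how (i) and (ii) combine, in the proof of Theorem \ref{thm: M* general}, to control $\E \sup_{\vu \in T_\e} \|\vu\|_2$; the factor of $2$ in both inequalities is exactly what determines the constants appearing in \eqref{eq: M* general}.
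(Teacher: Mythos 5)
Your proposal is correct and matches the paper's treatment: the paper does not prove Proposition~\ref{prop: symmetrization contraction} itself but cites \cite[Lemma~6.3]{LT} for symmetrization and \cite[Theorem~4.12]{LT} for contraction, which are exactly the two classical arguments you sketch (the independent-copy/Jensen/sign-insertion argument for (i), and the Ledoux--Talagrand contraction principle applied to $\phi(x)=|x|$ after conditioning on the $Z_i$ for (ii)).
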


Both statements are relatively easy to prove even in greater generality.
For example, taking the absolute values of $Z_i(t)$ in the contraction principle can be 
replaced by applying general Lipschitz functions. Proofs of symmetrization 
and contraction principles can be found in \cite[Lemma~6.3]{LT} and \cite[Theorem~4.12]{LT}, 
respectively.

\subsection{Proof of Theorem~\ref{thm: M* general}}

Let $\va_i^\tran$ denote the rows of $A$; thus $\va_i$ are independent $N(0,I_n)$ random vectors.
The desired bound \eqref{eq: M* general} would follow 
from the deviation inequality
\begin{equation}         \label{eq: M* deviation}
\E \sup_{\vu \in T} \Big| \frac{1}{m} \sum_{i=1}^m |\ip{\va_i}{\vu}| - \sqrt{\frac{2}{\pi}} \, \|\vu\|_2 \Big|
\le \frac{4}{\sqrt{m}} \, \E \sup_{\vu \in T} |\ip{\vg}{\vu}|.
\end{equation}
Indeed, if this inequality holds, then same is true if we replace $T$ by 
the smaller set $T_\e$ in the left hand side of \eqref{eq: M* deviation}. 
But for $\vu \in T_\e$, we have 
$\frac{1}{m} \sum_{i=1}^m |\ip{\va_i}{\vu}| = \frac{1}{m}\|A\vu\|_1 \le \e$, 
and the bound \eqref{eq: M* general} follows by triangle inequality.

The rotation invariance of Gaussian distribution 
implies that 
\begin{equation}         \label{eq: expectation ip}
\E |\ip{\va_i}{\vu}| = \sqrt{\frac{2}{\pi}} \, \|\vu\|_2.
\end{equation}
Thus, using symmetrization and then contraction inequalities
from Proposition~\ref{prop: symmetrization contraction}, we can 
bound the left side of \eqref{eq: M* deviation} by
\begin{equation}         \label{eq: sym contr applied}
4 \E \sup_{\vu \in T} \Big| \frac{1}{m} \sum_{i=1}^m \e_i \ip{\va_i}{\vu} \Big|
= 4 \E \sup_{\vu \in T} \left| \ip{\frac{1}{m} \sum_{i=1}^m \e_i \va_i}{\vu} \right|.
\end{equation}
Here $\e_i$ are independent Rademacher variables.

Conditioning on $\e_i$ and using rotation invariance, we see that 
the random vector 
$$
\vg := \frac{1}{\sqrt{m}} \sum_{i=1}^m \e_i \va_i
$$
has distribution $N(0,I_n)$. Thus \eqref{eq: sym contr applied} can be written as
$$
\frac{4}{\sqrt{m}} \E \sup_{\vu \in T} |\ip{\vg}{\vu}|.
$$
This proves \eqref{eq: M* deviation} and completes the proof 
of Theorem~\ref{thm: M* general}. \qed

\subsection{From expectation to overwhelming probability}			\label{s: whp}

The $M^*$ bound that we just proved, and in fact all results in this survey, 
are stated in terms of expected value for simplicity of presentation. 
One can upgrade them to estimates with overwhelming probability using {\em concentration of measure},
see \cite{Ledoux}.
We will mention how do this for the results we just proved; the reader should be able 
to do so for further results as well.

Let us first obtain a high-probability version of the deviation inequality \eqref{eq: M* deviation}
using the {\em Gaussian concentration inequality}.
We will consider the deviation 
$$
Z(A) := \sup_{\vu \in T} \Big| \frac{1}{m} \sum_{i=1}^m |\ip{\va_i}{\vu}| - \sqrt{\frac{2}{\pi}} \, \|\vu\|_2 \Big|
$$
as a function of the matrix $A \in \R^{m \times n}$. Let us show that it is a Lipschitz function 
on $\R^{m \times n}$ equipped with Frobenius norm $\|\cdot\|_F$ (which is the same as the 
Euclidean norm on $\R^{mn}$).  
Indeed, two applications of the triangle inequality followed by two applications of the Cauchy-Schwarz 
inequality imply that for matrices $A$ and $B$ with rows $\va_i^\tran$ and $\vb_i^\tran$ respectively, 
we have
\begin{align*}
|Z(A) - Z(B)| 
&\leq \sup_{\vu \in T} \frac{1}{m} \sum_{i=1}^m |\ip{\va_i - \vb_i}{\vu} | \\
&\leq \frac{d(T)}{m} \sum_{i=1}^m \|\va_i - \vb_i\|_2 \quad (\text{where } d(T) = \max_{\vu \in T} \|\vu\|_2) \\
&\leq \frac{d(T)}{\sqrt{m}} \|A - B\|_F.
\end{align*}
Thus the function $A \mapsto Z(A)$ has Lipschitz constant bounded by $d(K)/\sqrt{m}$.  
We may now bound the deviation probability for $Z$ using the Gaussian concentration inequality (see \cite[Equation 1.6]{LT}) as follows:
$$
\Pr{|Z - \E Z| \geq t} \leq 2 \exp \Big(-\frac{mt^2}{2d(T)^2}\Big), \quad t \ge 0.
$$
This is a high-probability version of the deviation inequality \eqref{eq: M* deviation}.

Using this inequality, one quickly deduces a corresponding high-probability version of 
Theorem~\ref{thm: M* general}. It states that
$$
\sup_{\vu \in T_\e} \|\vu\|_2 
  \le \sqrt{\frac{8\pi}{m}} \, \E \sup_{\vu \in T} |\ip{\vg}{\vu}| + \sqrt{\frac{\pi}{2}} \, (\e+t)
$$
with probability at least $1-2\exp(-mt^2 / 2d(T)^2)$.

As before, we obtain from this the following {\em high-probability version of the $M^*$
bound}, Theorem~\ref{thm: M* general}. It states that 
$$
\diam(K \cap E) \le \frac{C w(K)}{\sqrt{m}} + Ct
$$
with probability at least $1-2\exp(-mt^2 / 2\diam(K)^2)$.

\section{Consequences: estimation from noisy linear observations}		\label{s: noisy}

Let us apply the general $M^*$ bound, Theorem~\ref{thm: M* general}, 
to estimation problems. This will be even more straightforward
 than our application of the standard $M^*$ bound in
Section~\ref{s: estimation linear}. Moreover, we will now be able to 
treat {\em noisy} observations.

Like before, our goal is to estimate an unknown vector $\vx$ that lies in a known 
feasible set $K \subset \R^n$, from a random observation vector $\vy \in \R^m$.
This time we assume that, for some known level of noise $\e \ge 0$, we have 
\begin{equation}         \label{eq: noisy observations}
\vy = A\vx + \vnu, \qquad \frac{1}{m} \|\vnu\|_1 
= \frac{1}{m} \sum_{i=1}^m |\nu_i| \le \e.
\end{equation}
Here $A$ is an $m \times n$ Gaussian matrix as before. 
The noise vector $\vnu$ may be unknown and have arbitrary structure. 
In particular $\vnu$ may depend on $A$, so even adversarial errors are allowed.
The $\ell_1$ constraint in \eqref{eq: noisy observations} can clearly be replaced by 
the stronger $\ell_2$ constraint 
$$
\frac{1}{m} \|\vnu\|_2^2 
= \frac{1}{m} \sum_{i=1}^m \nu_i^2 
\le \e^2.
$$

The following result is a generalization of Theorem~\ref{thm: estimation feasibility} 
for noisy observations  \eqref{eq: noisy observations}. As before, it is valid for any bounded 
set $K \subset \R^n$.

\begin{theorem}[Estimation from noisy linear observations: feasibility program]	
		\label{thm: estimation noisy feasibility}
  Choose $\xhat$ to be any vector satisfying
  \begin{equation}         \label{eq: noisy feasibility}
  \xhat \in K \quad \text{and} \quad \frac{1}{m} \|A\xhat - \vy\|_1 \le \e.
  \end{equation}
  Then 
  $$
  \E \sup_{\vx \in K} \|\xhat-\vx\|_2 \le \sqrt{8\pi} \, \left( \frac{w(K)}{\sqrt{m}} + \e \right).
  $$
\end{theorem}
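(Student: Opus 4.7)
The plan is to reduce the noisy statement directly to the general $M^*$ bound (Theorem~\ref{thm: M* general}) applied to the symmetric set $T = K - K$ with tolerance $2\e$. The guiding observation is that if $\xhat$ satisfies \eqref{eq: noisy feasibility} and $\vy = A\vx + \vnu$ with $\frac{1}{m}\|\vnu\|_1 \le \e$, then the triangle inequality gives
\[
\tfrac{1}{m}\|A(\xhat - \vx)\|_1 \;\le\; \tfrac{1}{m}\|A\xhat - \vy\|_1 + \tfrac{1}{m}\|\vy - A\vx\|_1 \;\le\; \e + \e \;=\; 2\e.
\]
Setting $\vu := \xhat - \vx$, this says exactly that $\vu \in T_{2\e}$ in the notation \eqref{eq: Te} with $T = K-K$. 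In particular $\|\xhat - \vx\|_2 \le \sup_{\vu \in T_{2\e}} \|\vu\|_2$, and since this bound holds uniformly over $\vx \in K$ (the bound does not depend on the choice of $\vx$ beyond $\vx \in K$), we also have $\sup_{\vx \in K} \|\xhat - \vx\|_2 \le \sup_{\vu \in T_{2\e}} \|\vu\|_2$.

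Taking expectations and invoking Theorem~\ref{thm: M* general} with this $T$ and tolerance $2\e$, we obtain
\[
\E \sup_{\vx \in K} \|\xhat - \vx\|_2 \;\le\; \sqrt{\tfrac{8\pi}{m}} \, \E \sup_{\vu \in K-K} |\ip{\vg}{\vu}| \;+\; \sqrt{\tfrac{\pi}{2}} \cdot 2\e.
\]
Next I would observe that $K - K$ is origin-symmetric, so $\sup_{\vu \in K-K} |\ip{\vg}{\vu}| = \sup_{\vu \in K-K} \ip{\vg}{\vu}$, whose expectation is exactly $w(K)$ by Definition~\ref{def: mean width}. Finally, a cosmetic simplification $\sqrt{\pi/2} \cdot 2\e = \sqrt{2\pi}\,\e = \tfrac{1}{2}\sqrt{8\pi}\,\e \le \sqrt{8\pi}\,\e$ collects the constants into the stated form $\sqrt{8\pi}\bigl(w(K)/\sqrt{m} + \e\bigr)$.

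There is no real obstacle here beyond bookkeeping; the only conceptual point is that the adversarial/unknown noise $\vnu$ merely doubles the tolerance that feeds into the $M^*$ bound (from $\e$ to $2\e$), which the final constant comfortably absorbs. The heavy lifting — the symmetrization, contraction and rotation-invariance argument leading to \eqref{eq: M* general} — has already been done, so the noisy feasibility estimator is essentially an immediate corollary.
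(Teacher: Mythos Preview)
Your proposal is correct and follows essentially the same approach as the paper: apply the general $M^*$ bound (Theorem~\ref{thm: M* general}) to $T = K-K$ with tolerance $2\e$, use the triangle inequality to place $\xhat - \vx$ in $T_{2\e}$, and simplify the Gaussian supremum via the symmetry of $K-K$. The only difference is the order of presentation (you verify $\xhat - \vx \in T_{2\e}$ first, the paper does it last), which is immaterial.
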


\begin{proof}
We apply the general $M^*$ bound, Theorem~\ref{thm: M* general}, for the set $T = K-K$, 
and with $2\e$ instead of $\e$. It follows that
$$
\E \sup_{u \in T_{2\e}} \|\vu\|_2
\le \sqrt{\frac{8\pi}{m}} \, \E \sup_{\vu \in T} |\ip{\vg}{\vu}| + \sqrt{2\pi} \, \e
\le \sqrt{8\pi} \, \left( \frac{w(K)}{\sqrt{m}} + \e \right).
$$
The last inequality follows from the definition of mean width and the symmetry of $T$.

To finish the proof, it remains to check that 
\begin{equation}         \label{eq: T2e}
\xhat-\vx \in T_{2\e}.
\end{equation}
To prove this, first note that $\xhat, \vx \in K$, so 
$\xhat-\vx \in K-K = T$.
Next, by triangle inequality, we have 
$$
\frac{1}{m} \|A(\xhat - \vx)\|_1
= \frac{1}{m} \|A\xhat - \vy + \vnu\|_1
\le \frac{1}{m} \|A\xhat - \vy\|_1 + \frac{1}{m} \|\vnu\|_1
\le 2 \e.
$$
The last inequality follows from \eqref{eq: noisy observations} and \eqref{eq: noisy feasibility}.
We showed that the vector $\vu = \xhat-\vx$ satisfies both constraints that define 
$T_{2\e}$ in \eqref{eq: Te}. Hence \eqref{eq: T2e} holds, and the proof 
of the theorem is complete.
\end{proof}

And similarly to Theorem~\ref{thm: estimation optimization}, we can 
cast estimation as an optimization (rather than feasibility) program. 
As before, it is valid for any bounded star-shaped set $K \subset \R^n$
with nonempty interior. 

\begin{theorem}[Estimation from noisy linear observations: optimization program]	
		\label{thm: estimation noisy optimization}
  Choose $\xhat$ to be a solution to the program 
  \begin{equation}         \label{eq: estimation noisy optimization}
  \text{minimize } \|\vx'\|_K \quad \text{subject to} \quad \frac{1}{m} \|A\vx' - \vy\|_1 \le \e.
  \end{equation}
  Then 
  $$
  \E \sup_{\vx \in K} \|\xhat-\vx\|_2 \le \sqrt{8\pi} \, \left( \frac{w(K)}{\sqrt{m}} + \e \right).
  $$
\end{theorem}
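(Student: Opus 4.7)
My plan is to reduce the optimization program to the feasibility program of Theorem~\ref{thm: estimation noisy feasibility}, in direct analogy with how Theorem~\ref{thm: estimation optimization} is deduced from Theorem~\ref{thm: estimation feasibility}. The entire argument hinges on verifying that any minimizer $\xhat$ of \eqref{eq: estimation noisy optimization} automatically satisfies both feasibility constraints $\xhat \in K$ and $\frac{1}{m}\|A\xhat - \vy\|_1 \le \e$ from \eqref{eq: noisy feasibility}; once that is done, Theorem~\ref{thm: estimation noisy feasibility} delivers the desired error bound immediately.

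First I would check that the true vector $\vx$ is itself feasible for the program \eqref{eq: estimation noisy optimization}. From the noise model \eqref{eq: noisy observations} we have $A\vx - \vy = -\vnu$, so $\frac{1}{m}\|A\vx - \vy\|_1 = \frac{1}{m}\|\vnu\|_1 \le \e$, which means $\vx$ satisfies the constraint. Moreover, since $\vx \in K$ and $K$ is the unit sublevel set of its Minkowski functional (the property of closed star-shaped sets with nonempty interior recalled before Theorem~\ref{thm: estimation optimization}), we have $\|\vx\|_K \le 1$. Thus $\vx$ is a feasible candidate with objective value at most $1$.

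Next, by the optimality of $\xhat$ as a minimizer of $\|\cdot\|_K$ over the feasible set, we obtain $\|\xhat\|_K \le \|\vx\|_K \le 1$, which (using $K = \{\vx' : \|\vx'\|_K \le 1\}$) gives $\xhat \in K$. Since $\xhat$ also satisfies the noise constraint $\frac{1}{m}\|A\xhat - \vy\|_1 \le \e$ by virtue of being feasible in \eqref{eq: estimation noisy optimization}, both conditions in \eqref{eq: noisy feasibility} hold. Theorem~\ref{thm: estimation noisy feasibility} then gives $\E \sup_{\vx \in K} \|\xhat - \vx\|_2 \le \sqrt{8\pi}\bigl(w(K)/\sqrt{m} + \e\bigr)$, as required.

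The argument is essentially structural and contains no hard step; the only thing to be careful about is the star-shaped hypothesis, which is what ensures that $K$ coincides with the $1$-sublevel set of $\|\cdot\|_K$ and thus makes the chain $\|\xhat\|_K \le 1 \Rightarrow \xhat \in K$ valid. Everything else is a transparent consequence of optimality plus the triangle-inequality calculation already carried out inside the proof of Theorem~\ref{thm: estimation noisy feasibility}.
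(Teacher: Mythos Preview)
Your proof is correct and follows essentially the same approach as the paper: show that $\vx$ is feasible for the program, use optimality to conclude $\|\xhat\|_K \le \|\vx\|_K \le 1$ and hence $\xhat \in K$, and then invoke Theorem~\ref{thm: estimation noisy feasibility}. The paper's proof is nearly word-for-word the same argument.
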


\begin{proof}
It suffices to check that $\xhat \in K$; the conclusion would then follow 
from Theorem~\ref{thm: estimation noisy feasibility}.
Note first that by choice of $\xhat$ we have 
$\frac{1}{m} \|A\xhat - \vy\|_1 \le \e$,
and by assumption \eqref{eq: noisy observations} we have
$\frac{1}{m} \|A\vx - \vy\|_1 = \frac{1}{m} \|\vnu\|_1 \le \e$.
Thus both $\xhat$ and $\vx$ satisfy the constraint in \eqref{eq: estimation noisy optimization}.
Therefore, by choice of $\xhat$, we have
$$
\|\xhat\|_K \le \|\vx\|_K \le 1;
$$
the last inequality is nothing else than our assumption that $\vx \in K$.
It follows $\xhat \in K$ as claimed. 
\end{proof}

The remarks about algorithmic aspects of estimation made in 
Sections~\ref{s: algorithmic} and \ref{s: effective dim}
apply also to the results of this section. In particular, the
estimation from noisy linear observations \eqref{eq: noisy observations} 
can be formulated as a {\em convex program}.

\section{Applications to sparse recovery and regression}				\label{s: sparse recovery}

Remarkable examples of feasible sets $K$ with low complexity
come from the notion of {\em sparsity}. 
Consider the set $K$ of all unit $s$-sparse vectors in $\R^n$. 
As we mentioned in Example~\ref{ex: sparse mean width}, the mean 
width of $K$ is 
$$
w(K) \sim s \log(n/s).
$$
According to the interpretation we discussed in Section~\ref{s: effective dim}, 
this means that the {\em effective dimension} of $K$ is of order $s \log(n/s)$.
Therefore,
$$
m \sim s \log(n/s)
$$
observations should suffice to estimate any $s$-sparse vector in $\R^n$.
Results of this type form the core of {\em compressed sensing}, 
a young area of signal processing, see \cite{DDEK, Kutyniok, CGLP, FR}.

\medskip

In this section we consider a more general model, where an unknown vector $\vx$
has a sparse representation {\em in some dictionary}.

We will specialize Theorem~\ref{thm: estimation noisy optimization} 
to the sparse recovery problem. 
The convex program will in this case amount to minimizing the $\ell_1$
norm of the coefficients. 
We will note that the notion of sparsity can be relaxed to accommodate
approximate, or ``effective'', sparsity. Finally, 
we will observe that the estimate $\xhat$ is most often unique and $m$-sparse.

\subsection{Sparse recovery for general dictionaries}

Let us fix a {\em dictionary} of vectors $\vd_1,\ldots,\vd_N \in \R^n$, 
which may be arbitrary (even linearly dependent). 
The choice of a dictionary depends on the application; common examples include 
unions of orthogonal bases and more generally tight frames (in particular, Gabor frames).
See \cite{CDS, DE, DNW, RSW} for an introduction to 
sparse recovery problems with general dictionaries. 

Suppose an unknown vector $\vx \in \R^n$ is {\em $s$-sparse
in the dictionary $\{\vd_i\}$}. 
This means that $\vx$ can be represented
as a linear combination of at most $s$ dictionary elements, i.~e. 
\begin{equation}         \label{eq: sparse representation}
\vx = \sum_{i=1}^N \a_i \vd_i \quad \text{with at most $s$ non-zero coefficients $\a_i \in \R$}.
\end{equation}

As in Section~\ref{s: noisy}, our goal is to recover $\vx$
from a noisy observation vector $\vy \in \R^m$ of the form
$$
\vy = A\vx + \vnu, \qquad \frac{1}{m} \|\vnu\|_1 
= \frac{1}{m} \sum_{i=1}^m |\nu_i| \le \e.
$$
Recall that $A$ is a known $m \times n$ Gaussian matrix, and
and $\vnu$ is an unknown noise vector, which can have arbitrary structure
(in particular, correlated with $A$). 

Theorem~\ref{thm: estimation noisy optimization} will quickly imply 
the following recovery result. 

\begin{theorem}[Sparse recovery: general dictionaries]			\label{thm: sparse recovery}
  Assume for normalization that all dictionary vectors satisfy $\|\vd_i\|_2 \le 1$. 
  Choose $\xhat$ to be a solution to the convex program 
  \begin{equation}         \label{eq: sparse recovery program}
  \text{minimize } \|\valpha'\|_1 \text{ such that } 
  \vx' = \sum_{i=1}^N \a_i' \vd_i 
  \text{ satisfies } \frac{1}{m} \|A\vx' - \vy\|_1 \le \e.
  \end{equation}
  Then 
  $$
  \E \|\xhat-\vx\|_2 
  \le C \sqrt{\frac{s \log N}{m}} \cdot \|\valpha\|_2 + \sqrt{2\pi} \; \e.
  $$
\end{theorem}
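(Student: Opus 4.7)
The plan is to reduce this to Theorem~\ref{thm: estimation noisy feasibility} by choosing the right feasible set. Set $R := \|\valpha\|_1$ and consider the ``atomic'' convex body
$$K := R \cdot \conv\{\pm \vd_1, \ldots, \pm \vd_N\}.$$
First I would check that $\vx \in K$: since $\vx/R = \sum_i (|\alpha_i|/R)\,\sign(\alpha_i)\,\vd_i$ and the coefficients $|\alpha_i|/R$ are nonnegative with $\sum_i |\alpha_i|/R = 1$, this exhibits $\vx/R$ as a convex combination of the points $\pm \vd_i$.

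Next I would identify the Minkowski functional of $K$ with the atomic norm. Unfolding the definition $\|\vx'\|_K = \inf\{\lambda > 0 : \vx' \in \lambda K\}$ and writing convex combinations of $\pm \vd_i$ as linear combinations $\sum \beta_i \vd_i$ with $\|\vbeta\|_1 \le \lambda$ gives
$$R \cdot \|\vx'\|_K \;=\; \inf\Big\{\|\valpha'\|_1 \;:\; \vx' = \textstyle\sum_i \alpha_i' \vd_i\Big\}.$$
So the program \eqref{eq: sparse recovery program} is exactly the Minkowski-functional minimization with respect to this $K$. Moreover, the candidate $\vx$ with its coefficient vector $\valpha$ is feasible for \eqref{eq: sparse recovery program} (since $\frac{1}{m}\|A\vx-\vy\|_1 = \frac{1}{m}\|\vnu\|_1 \le \e$), so the minimizer $\xhat$ satisfies $\|\widehat{\valpha}\|_1 \le \|\valpha\|_1 = R$, which means $\xhat \in K$. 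Thus the pair $(\xhat,\vy)$ meets both hypotheses \eqref{eq: noisy feasibility}, and Theorem~\ref{thm: estimation noisy feasibility} applies.

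The remaining step is to estimate $w(K)$. By homogeneity $w(K) = R \cdot w(K_0)$ where $K_0 = \conv\{\pm \vd_i\}$. By Proposition~\ref{prop: mean width}, $w(K_0) = w(\{\pm \vd_1,\dots,\pm \vd_N\})$; since each $\pm \vd_i$ lies in $B_2^n$, Example~\ref{ex: finite set mean width} gives $w(K_0) \le C\sqrt{\log(2N)} \le C'\sqrt{\log N}$. The $s$-sparsity of $\valpha$ and Cauchy--Schwarz yield $R = \|\valpha\|_1 \le \sqrt{s}\,\|\valpha\|_2$, so
$$w(K) \;\le\; C\sqrt{s\log N}\,\|\valpha\|_2.$$
Substituting into the conclusion of Theorem~\ref{thm: estimation noisy feasibility} (using the sharper form $\sqrt{8\pi}\,w(K)/\sqrt{m} + \sqrt{2\pi}\,\e$ actually produced in its proof rather than the loose bound stated) gives the claimed inequality.

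There is no real obstacle here beyond the initial act of recognizing the right $K$: once one sees that the $\ell_1$ program in \eqref{eq: sparse recovery program} is the Minkowski functional of the scaled atomic polytope $K$, the sparsity bound $\|\valpha\|_1 \le \sqrt{s}\|\valpha\|_2$ converts the ``effective dimension'' $w(K)^2$ into $s\log N$ times $\|\valpha\|_2^2$, and everything else is plugging into the general $M^*$-based estimate. The one point demanding minor care is that $K$ need not have nonempty interior in $\R^n$ when the dictionary fails to span (which is why I would invoke the feasibility version of the theorem rather than the optimization version).
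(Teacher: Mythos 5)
Your argument is correct and matches the paper's proof essentially step for step: the same scaled atomic polytope $K = \|\valpha\|_1 \cdot \conv\{\pm\vd_i\}$, the same identification of the $\ell_1$ program with the Minkowski-functional program, the same mean-width bound via convexification and Example~\ref{ex: finite set mean width}, and the same Cauchy--Schwarz step $\|\valpha\|_1 \le \sqrt{s}\,\|\valpha\|_2$. The only (cosmetic) difference is that you verify $\xhat \in K$ directly and invoke the feasibility version (Theorem~\ref{thm: estimation noisy feasibility}) rather than the optimization version (Theorem~\ref{thm: estimation noisy optimization}); since the paper proves the latter by exactly that verification, this changes nothing substantive, though your observation that it sidesteps the nonempty-interior hypothesis when the dictionary fails to span $\R^n$ is a fair point of extra care.
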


\begin{proof}
Consider the sets 
$$
\bar{K} := \conv\{ \pm \vd_i\}_{i=1}^N, \quad K : = \|\valpha\|_1 \cdot \bar{K}.
$$
Representation \eqref{eq: sparse representation} implies that $\vx \in K$, 
so it makes sense to apply 
Theorem~\ref{thm: estimation noisy optimization} for $K$.

Let us first argue that the optimization program in Theorem~\ref{thm: estimation noisy optimization}
can be written in the form \eqref{eq: sparse recovery program}.
Observe that we can replace $\|\vx'\|_K$ by $\|\vx'\|_{\bar{K}}$ 
in the optimization problem \eqref{eq: estimation noisy optimization} without changing its solution.
(This is because $\|\vx'\|_{\bar{K}} = \|\valpha\|_1 \cdot \|\vx'\|_K$ and $\|\valpha\|_1$
is a constant value.)
Now, by definition of $\bar{K}$, we have
$$
\|\vx'\|_{\bar{K}} = \min \Big\{ \|\valpha'\|_1 :\; \vx' = \sum_{i=1}^N \a_i' \vd_i \Big\}.
$$
Therefore, the optimization programs \eqref{eq: estimation noisy optimization}
and \eqref{eq: sparse recovery program} are indeed equivalent.

Next, to evaluate the error bound in Theorem~\ref{thm: estimation noisy optimization}, 
we need to bound the mean width of $K$. The convexification property \eqref{eq: conv wK} and 
Example~\ref{ex: finite set mean width} yield 
$$
w(K) = \|\valpha\|_1 \cdot w(\bar{K}) \le C \|\valpha\|_1 \cdot \sqrt{\log N}.
$$
Putting this into the conclusion of Theorem~\ref{thm: estimation noisy optimization},
we obtain the error bound 
$$
\E \sup_{\vx \in K} \|\xhat-\vx\|_2 
\le \sqrt{8\pi} \; C \; \sqrt{\frac{\log N}{m}} \cdot \|\valpha\|_1 + \sqrt{2\pi} \; \e.
$$
To complete the proof, it remains to note that 
\begin{equation}         \label{eq: ell1 ell2}
\|\valpha\|_1 \le \sqrt{s} \cdot \|\valpha\|_2,
\end{equation}
since $\valpha$ is $s$-sparse, i.e. it has only $s$ non-zero coordinates.
\end{proof}

\subsection{Remarkable properties of sparse recovery} 

Let us pause to look more closely at the statement of Theorem~\ref{thm: sparse recovery}.

\subsubsection{General dictionaries}
  Theorem~\ref{thm: sparse recovery} is very flexible with respect to the 
  choice of a dictionary $\{\vd_i\}$. 
  Note that there are essentially {\em no restrictions on the dictionary}. 
  (The normalization assumption $\|\vd_i\|_2 \le 1$ can be dispensed 
  of at the cost of increasing the error bound by the factor 
  of $\max_i \|\vd_i\|_2$.) In particular, the dictionary may be {\em linearly dependent}.

\subsubsection{Effective sparsity}				\label{s: effective sparsity}
  The reader may have noticed that the proof of Theorem~\ref{thm: sparse recovery}
  used sparsity in a quite mild way, only through inequality \eqref{eq: ell1 ell2}.
  So the result is still true for vectors $\vx$ that are {\em approximately sparse}
  in the dictionary.  
  Namely, the Theorem~\ref{thm: sparse recovery} will hold if we replace the exact notion of 
  sparsity (the number of nonzero coefficients) 
  by the more flexible notion of {\em effective sparsity}, defined as  
  $$
  \text{effective sparsity} (\valpha) := (\|\valpha\|_1 / \|\valpha\|_2)^2.
  $$
  It is now clear how to extend sparsity in a dictionary \eqref{eq: sparse representation}
  to approximate sparsity. We can say that a vector $\vx$ is {\em effectively
  $s$-sparse in a dictionary $\{d_i\}$} if it can be represented as 
  $\vx = \sum_{i=1}^N \a_i \vd_i$ where the coefficient vector 
  $\va = (\a_1,\ldots,\a_N)$ is effectively $s$-sparse.
  
  The effective sparsity is clearly bounded by the exact sparsity, and it is 
  robust with respect to small perturbations.

\subsubsection{Linear programming}
  The convex programs \eqref{eq: sparse recovery program} and \eqref{eq: sparse recovery}
  can be reformulated as linear programs.
   This can be done by introducing new variables $u_1,\ldots, u_N$; 
  instead of minimizing $\|\valpha'\|_1$ in \eqref{eq: sparse recovery program}, we can 
  equivalently minimize the linear function $\sum_{i=1}^N u_i$ subject to the additional 
  linear constraints $-u_i \le \alpha'_i \le u_i$, $i=1,\ldots,N$. 
  In a similar fashion, one can replace the convex constraint $\frac{1}{m} \|A\vx' - \vy\|_1 \le \e$ 
  in \eqref{eq: sparse recovery program} by $n$ linear constraints.
    
\subsubsection{Estimating the coefficients of sparse representation}
It is worthwhile to notice that as a result of solving the convex recovery
program \eqref{eq: sparse recovery program}, we obtain not only an estimate $\xhat$
of the vector $\vx$, but also an estimate $\widehat{\valpha}$ of the 
{\em coefficient vector} in the representation $\vx = \sum \a_i \vd_i$.

\subsubsection{Sparsity of solution}
The solution of the sparse recovery problem \eqref{eq: sparse recovery program}
may not be exact in general, that is $\xhat \ne \vx$ can happen. 
This can be due to several factors -- the generality of the dictionary, approximate 
(rather than exact) sparsity of $\vx$ in the dictionary, and the noise $\nu$
in the observations.  
But even in this general situation, {\em the solution $\vx$ is still $m$-sparse},
in all but degenerate cases. We will now state and prove this known fact 
(see \cite{FR}).

\begin{proposition}[Sparsity of solution]				\label{prop: sparse solution}
  Assume that a given convex recovery program \eqref{eq: sparse recovery program}
  has a unique solution $\widehat{\valpha}$ for the coefficient vector. 
  Then $\widehat{\valpha}$ is $m$-sparse, and consequently  
  $\xhat$ is $m$-sparse in the dictionary $\{\vd_i\}$.
  This is true even in presence of noise in observations, and even when 
  no sparsity assumptions on $\vx$ are in place.
\end{proposition}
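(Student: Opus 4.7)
The plan is to assume for contradiction that the unique minimizer $\widehat{\valpha}$ has support $S$ of size $|S| > m$, and then produce a distinct feasible point with $\ell_1$-norm no larger than $\|\widehat{\valpha}\|_1$, violating uniqueness.

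The first step is to rewrite the recovery program in terms of the coefficient vector alone. If $B$ is the $m \times N$ matrix whose $i$-th column is $A \vd_i$, then $A \vx' = B \valpha'$, so \eqref{eq: sparse recovery program} becomes: minimize $\|\valpha'\|_1$ subject to $\tfrac{1}{m}\|B\valpha' - \vy\|_1 \le \e$. Under the assumption $|S| > m$, the columns $\{B e_i : i \in S\}$ are $|S|$ vectors in $\R^m$ and are therefore linearly dependent, so one can choose a nonzero $\vh \in \R^N$ with $\mathrm{supp}(\vh) \subseteq S$ and $B \vh = 0$.

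Next I would perturb along $\vh$. For every $t \in \R$ we have $B(\widehat{\valpha} + t \vh) = B \widehat{\valpha}$, so the feasibility constraint is preserved. Because $\widehat{\alpha}_i \neq 0$ for $i \in S$, there is a threshold $t_0 > 0$ such that for $|t| \le t_0$ the sign of $\widehat{\alpha}_i + t h_i$ agrees with $\sign(\widehat{\alpha}_i)$ on $S$. Hence
\[
\|\widehat{\valpha} + t \vh\|_1 \;=\; \sum_{i \in S} \sign(\widehat{\alpha}_i)(\widehat{\alpha}_i + t h_i) \;=\; \|\widehat{\valpha}\|_1 + t \, c, \qquad c := \sum_{i \in S} \sign(\widehat{\alpha}_i)\, h_i.
\]
If $c \neq 0$, choosing $t \in [-t_0, t_0]$ with $tc < 0$ strictly decreases the objective, contradicting optimality of $\widehat{\valpha}$. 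If $c = 0$, then any $t \in (0, t_0]$ gives a distinct feasible point with the same objective value, contradicting uniqueness. Either way, we get a contradiction, so $|S| \le m$.

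The conclusion about $\xhat$ is then immediate: $\xhat = \sum_{i=1}^N \widehat{\alpha}_i \vd_i$ is a linear combination of at most $m$ dictionary elements, i.e.\ $m$-sparse in the dictionary $\{\vd_i\}$. The only delicate point in the argument is the sign-stability step, which relies on $\vh$ being supported on $S$ so that no new nonzero coordinate is introduced and the $\ell_1$-norm is exactly linear in $t$ on a neighborhood of $0$; this is precisely where the choice of $\vh$ in $\ker B$ restricted to $S$ pays off. Note that no hypothesis on the noise, the sparsity of $\vx$, or the structure of the dictionary is needed beyond uniqueness of the minimizer.
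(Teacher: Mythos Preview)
Your proof is correct. It differs in presentation from the paper's argument, which is purely geometric: the paper observes that the feasible set $\{\valpha':\tfrac1m\|AD\valpha'-\vy\|_1\le\e\}$ is a polyhedral cylinder all of whose faces have dimension at least $N-m$, while the level sets of $\|\cdot\|_1$ are dilates of the $\ell_1$ ball; uniqueness of the touching point forces the touching face of the $\ell_1$ ball to have dimension at most $m$, which is the same as $m$-sparsity. Your argument unpacks this geometry algebraically: the existence of a kernel direction $\vh$ supported on $S$ when $|S|>m$ is exactly the statement that the cylinder's lineality space meets the relative interior of the $\ell_1$ face corresponding to $S$, and your two cases ($c\ne 0$ vs.\ $c=0$) correspond to the face being non-optimal vs.\ the intersection being non-trivial. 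The perturbation approach has the virtue of being entirely self-contained and not invoking any facts about face structure of polyhedra; the paper's approach gives a cleaner conceptual picture but is slightly less explicit.
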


\begin{proof}
The result follows by simple dimension considerations. 
  First note that the constraint on $\valpha'$ in the optimization problem 
  \eqref{eq: sparse recovery program} can be written in the form 
  \begin{equation}         \label{eq: ell1 constraint}
  \frac{1}{m}\|AD\valpha' - \vy\|_1 \le \e,
  \end{equation}
  where $D$ is the $n \times N$ matrix whose columns are the dictionary vectors $\vd_i$.
  Since matrix $AD$ has dimensions $m \times N$, the 
  constraint defines a cylinder in $\R^N$ 
  whose infinite directions are formed by the kernel of $AD$, which has dimension 
  at least $N-m$. Moreover, this cylinder is a polyhedral set (due to the $\ell_1$ norm defining it), 
  so it has no faces of dimension smaller than $N-m$.   
  
  On the other hand, the level sets of the objective function $\|\valpha'\|_1$ are also 
  polyhedral sets; they are dilates of the unit $\ell_1$ ball. 
  The solution $\widehat{\valpha}$ of the optimization problem 
  \eqref{eq: sparse recovery program} is thus a point in $\R^N$ where 
  the smallest dilate of the $\ell_1$ ball touches the cylinder.  
  The uniqueness of solution means that a touching point is unique.
  This is illustrated in Figure~\ref{fig: touching faces}. 
  
  \begin{figure}[htp]			
  \centering \includegraphics[height=3cm]{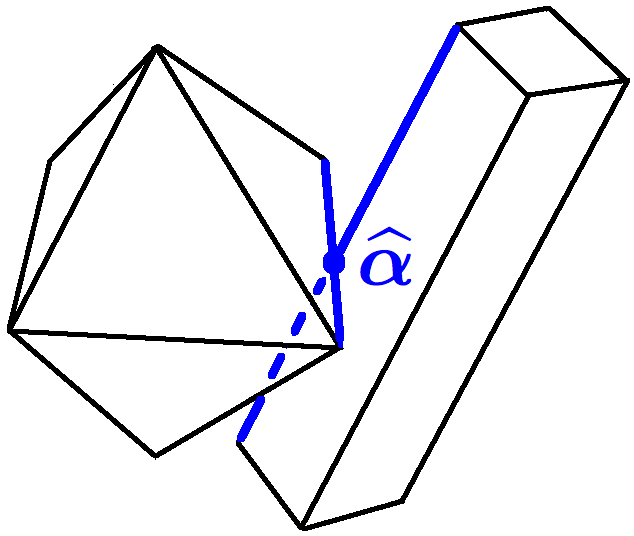} 
  \caption{Illustration for the proof of Proposition~\ref{prop: sparse solution}.
    The polytope on the left represents a level set of the $\ell_1$ ball. 
    The cylinder on the right represents the vectors $\valpha'$ satisfying the 
    constraint \eqref{eq: ell1 constraint}. The two polyhedral sets touch at point $\widehat{\valpha}$.}
  \label{fig: touching faces}	
  \end{figure}
  
  Consider the faces of these two polyhedral sets of smallest dimensions 
  that contain the touching point; we may call these the touching faces. 
  The touching face of the cylinder has dimension at least $N-m$, as all of its faces do. 
  Then the touching face of the $\ell_1$ ball must have dimension at most $m$, otherwise
  the two touching faces would intersect by more than one point. This
  translates into the $m$-sparsity of the solution $\widehat{\valpha}$, as claimed.
\end{proof}

In view of Proposition~\ref{prop: sparse solution}, we can ask when the 
solution $\widehat{\valpha}$ of the convex program \eqref{eq: sparse recovery program} 
is unique. This does not always happen; for example this fails if $\vd_1=\vd_2$. 
  
Uniqueness of solutions of optimization problems like \eqref{eq: sparse recovery program} 
is extensively studied \cite{FR}. Let us mention here a cheap way to obtain uniqueness.
This can be achieved by an arbitrarily small generic perturbation of the dictionary elements, 
such as adding a small independent Gaussian vector to each $\vd_i$.
Then one can see that the solution $\widehat{\valpha}$ (and therefore $\xhat$ as well)
are unique almost surely. Invoking Proposition~\ref{prop: sparse solution} we see 
that $\xhat$ is $m$-sparse in the perturbed dictionary.

\subsection{Sparse recovery for the canonical dictionary}

Let us illustrate Theorem~\ref{thm: sparse recovery} for the simplest example 
of a dictionary -- the canonical basis of $\R^n$:
$$
\{\vd_i\}_{i=1}^n = \{\ve_i\}_{i=1}^n.
$$
In this case, our assumption is that an unknown vector $\vx \in \R^n$ is {\em $s$-sparse}
in the usual sense, meaning that $\vx$ has at most $s$ non-zero coordinates, 
or {\em effectively $s$-sparse} as in Section~\ref{s: effective sparsity}. 
Theorem~\ref{thm: sparse recovery} then reads as follows.

\begin{corollary}[Sparse recovery]		\label{cor: sparse recovery}
  Choose $\xhat$ to be a solution to the convex program 
  \begin{equation}         \label{eq: sparse recovery}
  \text{minimize } \|\vx'\|_1 \text{ subject to } 
  \frac{1}{m} \|A\vx' - \vy\|_1 \le \e.
  \end{equation}
  Then 
  $$
  \E \|\xhat-\vx\|_2 
  \le C \sqrt{\frac{s \log n}{m}} \cdot \|\vx\|_2 + \sqrt{2\pi} \; \e.	\qquad \qed
  $$

\end{corollary}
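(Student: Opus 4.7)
The plan is to deduce Corollary~\ref{cor: sparse recovery} as an immediate specialization of Theorem~\ref{thm: sparse recovery} to the canonical dictionary $\vd_i = \ve_i$ for $i=1,\ldots,n$. I would first verify that this choice of dictionary meets the hypotheses of the theorem: we have $N = n$, the normalization $\|\ve_i\|_2 = 1 \le 1$ holds, and any $s$-sparse vector $\vx \in \R^n$ admits the trivial representation $\vx = \sum_{i=1}^n x_i \ve_i$ in which the coefficient vector $\valpha = (x_1,\ldots,x_n)$ is itself $s$-sparse (with $\|\valpha\|_2 = \|\vx\|_2$).

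Next I would observe that, under this identification, the convex program \eqref{eq: sparse recovery program} reduces exactly to \eqref{eq: sparse recovery}. Indeed, the constraint $\vx' = \sum_i \a'_i \ve_i$ just says $\valpha' = \vx'$, the objective $\|\valpha'\|_1$ becomes $\|\vx'\|_1$, and the feasibility condition $\frac{1}{m}\|A\vx' - \vy\|_1 \le \e$ is literally the one appearing in the corollary. Therefore any solution $\xhat$ of \eqref{eq: sparse recovery} corresponds to a solution $\widehat{\valpha} = \xhat$ of the program in Theorem~\ref{thm: sparse recovery}.

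Finally, invoking Theorem~\ref{thm: sparse recovery} directly with $N = n$ and $\|\valpha\|_2 = \|\vx\|_2$ yields
\[
\E \|\xhat - \vx\|_2 \le C \sqrt{\frac{s \log n}{m}} \cdot \|\vx\|_2 + \sqrt{2\pi}\,\e,
\]
which is exactly the claimed bound. There is no genuine obstacle here; the entire argument is a bookkeeping reduction. The only mild subtlety is that Theorem~\ref{thm: sparse recovery} is stated for exactly $s$-sparse $\vx$, whereas the corollary could be interpreted for effectively sparse vectors as well — but as remarked in Section~\ref{s: effective sparsity}, the inequality \eqref{eq: ell1 ell2} used in the proof of Theorem~\ref{thm: sparse recovery} holds under effective sparsity too, so the same statement extends verbatim by reading $s$ as the effective sparsity $(\|\vx\|_1/\|\vx\|_2)^2$.
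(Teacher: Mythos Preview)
Your proposal is correct and matches the paper's approach exactly: the corollary is stated with a \qed\ because it is just Theorem~\ref{thm: sparse recovery} specialized to the canonical dictionary $\{\ve_i\}_{i=1}^n$, with $N=n$ and $\valpha=\vx$. Your extra remark about effective sparsity is also consistent with the paper's Section~\ref{s: effective sparsity}.
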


Sparse recovery results like Corollary~\ref{cor: sparse recovery} form the 
core of the area of {\em compressed sensing}, see \cite{DDEK, Kutyniok, CGLP, FR}.

In the noiseless case ($\e=0$) and for sparse (rather then effectively sparse)
vectors, one may even hope to recover $\vx$ {\em exactly}, meaning that
$\xhat = \vx$ with high probability. Conditions for exact recovery are now well 
understood in compressed sensing. We will discuss some exact 
recovery problems in Section~\ref{s: exact}.

\medskip

We can summarize Theorem~\ref{thm: sparse recovery} 
and the discussion around it as follows.

\begin{quote}
{\em Using linear programming, one can approximately recover a vector $\vx$ that
is $s$-sparse (or effectively $s$-sparse) in a general dictionary of size $N$, from
$m \sim s \log N$
random linear observations.}
\end{quote}

\subsection{Application: linear regression with constraints}			\label{s: regression}
The noisy estimation problem \eqref{eq: noisy observations} is equivalent
to {\em linear regression} with constraints. So in this section we will translate  
the story into the statistical language. We present here just one class of examples 
out of a wide array of statistical problems; we refer the reader to \cite{BG, Wainwright} 
for a recent review of high dimensional estimation problems from a statistical viewpoint. 

Linear regression is a model of linear relationship between 
one dependent variable and $n$ explanatory variables. It is usually written as 
$$
\vy = X \vbeta + \vnu.
$$
Here $X$ is an $n \times p$ matrix which contains a sample 
of $n$ observations of $p$ explanatory variables; 
$\vy \in \R^n$ represents a sample of $n$ observations of the dependent variable; 
$\vbeta \in \R^p$ is a coefficient vector;
$\vnu \in \R^n$ is a noise vector. 
We assume that $X$ and $\vy$ are known, while $\vbeta$ and $\vnu$ are unknown. 
Our goal is to estimate $\vbeta$.

We discussed a classical formulation of linear regression. In addition, we
often know, believe, or want to enforce some properties about the coefficient vector $\vbeta$, 
(for example, sparsity). We can express such extra information
as the assumption that 
$$
\vbeta \in K
$$
where $K \subset \R^p$ is a known feasible set. 
Such problem may be called a {\em linear regression with constraints}.
 
The high dimensional estimation results we have seen so far can be translated into 
the language of regression in a straightforward way. 
Let us do this for
Theorem~\ref{thm: estimation noisy optimization}; the interested reader
can make a similar translation or other results.

We assume that the explanatory variables are independent $N(0,1)$, 
so the matrix $X$ has all i.i.d. $N(0,1)$ entries. This requirement may be too strong
in practice; however see Section~\ref{s: sub-gaussian} 
on relaxing this assumption. The noise vector $\vnu$ is allowed have arbitrary structure
(in particular, it can be correlated with $X$). We assume that its magnitude is controlled:  
$$
\frac{1}{n} \|\vnu\|_1 = \frac{1}{n} \sum_{i=1}^n |\nu_i| \le \e
$$
for some known noise level $\e$.
Then we can restate Theorem~\ref{thm: estimation noisy optimization} in the 
following way.

\begin{theorem}[Linear regression with constraints]	
		\label{thm: regression with constraints}
  Choose $\vbetahat$ to be a solution to the program 
  $$  
  \text{minimize } \|\vbeta'\|_K \quad \text{subject to} \quad \frac{1}{n} \|X\vbeta' - \vy\|_1 \le \e.
  $$
  Then 
  $$
  \E \sup_{\vbeta \in K} \|\vbetahat-\vbeta\|_2 \le \sqrt{8\pi} \, \left( \frac{w(K)}{\sqrt{n}} + \e \right).
  \qquad \quad \qed
  $$
\end{theorem}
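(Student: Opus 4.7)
The plan is to recognize that this theorem is essentially a notational repackaging of Theorem~\ref{thm: estimation noisy optimization} into the language of linear regression, and then to apply that theorem directly. First I would set up the translation dictionary: the Gaussian sensing matrix $A$ of size $m \times n$ corresponds to the design matrix $X$ of size $n \times p$, the number of observations $m$ becomes the sample size $n$, the unknown signal $\vx \in \R^n$ becomes the regression coefficient vector $\vbeta \in \R^p$, the estimator $\xhat$ becomes $\vbetahat$, and the feasible set $K$ plays the identical role on both sides.

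Under this dictionary, every hypothesis of Theorem~\ref{thm: estimation noisy optimization} is in place: the entries of $X$ are i.i.d. $N(0,1)$ (matching the Gaussian matrix assumption), the observation model $\vy = X\vbeta + \vnu$ with $\vbeta \in K$ coincides with \eqref{eq: noisy observations}, and the noise vector $\vnu$ is allowed to be arbitrary (even correlated with $X$) subject to $\frac{1}{n}\|\vnu\|_1 \le \e$. The minimization program defining $\vbetahat$ is exactly of the form \eqref{eq: estimation noisy optimization} with the obvious renaming. Therefore Theorem~\ref{thm: estimation noisy optimization} applies and yields
$$
\E \sup_{\vbeta \in K} \|\vbetahat-\vbeta\|_2 \le \sqrt{8\pi}\left(\frac{w(K)}{\sqrt{n}} + \e\right),
$$
which is precisely the claimed bound.

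The only caveat worth flagging, and the closest thing to an obstacle, is that Theorem~\ref{thm: estimation noisy optimization} was stated for bounded star-shaped $K$ with nonempty interior. This mild regularity is inherited implicitly here, since it is what makes the Minkowski-functional comparison $\|\vbetahat\|_K \le \|\vbeta\|_K \le 1$ force $\vbetahat \in K$, which in turn is what lets us invoke the feasibility version of the bound. Beyond carrying this hypothesis along, no new mathematical content enters, and the proof reduces to bookkeeping.
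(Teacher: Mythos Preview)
Your proposal is correct and matches the paper's approach exactly: the paper presents this theorem as nothing more than a direct restatement of Theorem~\ref{thm: estimation noisy optimization} in regression notation, with the $\qed$ placed inline and no separate proof given. Your translation dictionary and the caveat about the star-shaped hypothesis are spot on.
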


\section{Extensions from Gaussian to sub-gaussian distributions}		\label{s: sub-gaussian}

So far, all our results were stated for Gaussian distributions.
Let us show how to relax this assumption. In this section, we
will modify the proof of the $M^*$ bound, Theorem~\ref{thm: M* general} 
for general {\em sub-gaussian} distributions, and indicate the 
consequences for the estimation problem.
A result of this type was proved in \cite{MPT} with a much more complex argument.

\subsection{Sub-gaussian random variables and random vectors}
A systematic introduction into sub-gaussian distributions can be found
in Sections 5.2.3 and 5.2.5 of \cite{V tutorial}; here we briefly mention the 
basic definitions.
According to one of the several equivalent definitions, 
a random variable $X$ is {\em sub-gaussian} if
$$
\E \exp(X^2/\psi^2) \le e.
$$
for some $\psi>0$. The smallest $\psi$ is called the {\em sub-gaussian norm}
and is denoted $\|X\|_\psitwo$. Normal and all bounded random variables
are sub-gaussian, while exponential random variables are not. 

The notion of sub-gaussian distribution transfers to higher dimensions as follows. 
A random vector $\vX \in \R^n$ is called sub-gaussian if all 
one-dimensional marginals $\ip{\vX}{\vu}$, $\vu \in \R^n$, 
are sub-gaussian random variables. The sub-gaussian norm of $\vX$ 
is defined as
\begin{equation}         \label{eq: sub-gaussian vector}
\|\vX\|_\psitwo := \sup_{\vu \in S^{n-1}} \|\ip{\vX}{\vu}\|_\psitwo
\end{equation}
where, as before, $S^{n-1}$ denotes the Euclidean sphere in $\R^n$.
Recall also that the random vector $\vX$ is called {\em isotropic} if
$$
\E \vX \vX^\tran = I_n.
$$
Isotropy is a scaling condition; any distribution in $\R^n$ which is not supported
in a low-dimensional subspace can be made isotropic by an appropriate 
linear transformation. To illustrate this notion with a couple of quick examples, 
one can check that $N(0,I_n)$ and the uniform distribution on the discrete cube $\{-1,1\}^n$
are isotropic and sub-gaussian distributions.

\subsection{$M^*$ bound for sub-gaussian distributions}

Now we state and prove a version of $M^*$ bound, Theorem~\ref{thm: M* general},
for general {\em sub-gaussian} distributions. It is a variant of a result from \cite{MPT}.

\begin{theorem}[General $M^*$ bound for sub-gaussian distributions]	 \label{thm: M* sub-gaussian}
  Let $T$ be a bounded subset of $\R^n$. 
  Let $A$ be an $m \times n$ matrix whose rows $\va_i$ are i.i.d., mean zero, 
  isotropic and sub-gaussian random vectors in $\R^n$. Choose $\psi \ge 1$ so that 
  \begin{equation}         \label{eq: ai sub-gaussian}
  \|\va_i\|_\psitwo \le \psi, \quad i=1,\ldots,m.
  \end{equation}
  Fix $\e \ge 0$ and consider the set 
  $$
  T_\e := \Big\{ \vu \in T :\; \frac{1}{m} \|A\vu\|_1 \le \e \Big\}.
  $$
  Then 
  $$
  \E \sup_{\vu \in T_\e} \|\vu\|_2 
  \le C \psi^4 \Big( \frac{1}{\sqrt{m}} \, \E \sup_{\vu \in T} |\ip{\vg}{\vu}| 
    + \e \Big),
  $$
  where $g \sim N(0,I_n)$ is a standard Gaussian random vector in $\R^n$.
\end{theorem}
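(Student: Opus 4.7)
The plan is to rerun the proof of Theorem~\ref{thm: M* general} essentially verbatim, except at the two places where the Gaussian argument used rotation invariance: (i) the identity $\E|\ip{\va_i}{\vu}| = \sqrt{2/\pi}\,\|\vu\|_2$, and (ii) the observation that $\frac{1}{\sqrt m}\sum_i \e_i \va_i$ is again $N(0,I_n)$. Both must be replaced by weaker, $\psi$-dependent sub-gaussian substitutes, and the accumulated $\psi$-loss from these two replacements will account for the factor $\psi^4$.

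For ingredient (i), I would derive a two-sided first-moment estimate for $Z=\ip{\va_i}{\vu}$. Isotropy gives $\E Z^2 = \|\vu\|_2^2$ and the one-dimensional homogeneity of the sub-gaussian norm gives $\|Z\|_{\psi_2}\le \psi\|\vu\|_2$, whence $\E|Z|^3 \le C\psi^3\|\vu\|_2^3$. Applying Cauchy--Schwarz to $\E Z^2 = \E(|Z|^{1/2}\cdot |Z|^{3/2})$ yields $(\E Z^2)^2 \le \E|Z|\cdot \E|Z|^3$, so
\[
\E|\ip{\va_i}{\vu}| \;\ge\; \frac{(\E Z^2)^2}{\E|Z|^3} \;\ge\; \frac{c}{\psi^3}\,\|\vu\|_2.
\]
This lower bound degrades by a factor $\psi^3$ relative to its Gaussian counterpart.

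For ingredient (ii), I would carry out symmetrization and contraction exactly as in the proof of Theorem~\ref{thm: M* general} (the map $x\mapsto|x|$ is $1$-Lipschitz and vanishes at $0$, so Proposition~\ref{prop: symmetrization contraction} applies), reducing the deviation
\[
\E\sup_{\vu\in T}\Big|\tfrac{1}{m}\sum_{i=1}^m |\ip{\va_i}{\vu}| - \E|\ip{\va_i}{\vu}|\Big|
\]
to $\frac{4}{\sqrt m}\,\E\sup_{\vu\in T}|\ip{\vh}{\vu}|$, where $\vh := \frac{1}{\sqrt m}\sum_i \e_i\va_i$. Conditioning on the Rademacher signs and applying sub-gaussian Hoeffding shows that $\vh$ is a mean-zero sub-gaussian vector with $\|\vh\|_{\psi_2}\le C\psi$. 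Since $\vh$ is no longer Gaussian, one cannot close the Gaussian argument by distributional identity; instead, I would invoke the Fernique--Talagrand comparison (majorizing-measure) theorem cited in the excerpt, which controls the expected supremum of any sub-gaussian process by the $\gamma_2$-functional and hence by the Gaussian expected supremum. Because $(\ip{\vh}{\vu})$ and $(\ip{\vg}{\vu})$ have $\psi_2$-increments along $T$ that differ by a factor at most $C\psi$, this gives
\[
\E\sup_{\vu\in T}|\ip{\vh}{\vu}| \;\le\; C\psi\,\E\sup_{\vu\in T}|\ip{\vg}{\vu}|,
\]
a $\psi^1$ loss relative to the Gaussian case.

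Putting the two ingredients together, for any $\vu \in T_\e$ the triangle inequality and $\frac{1}{m}\|A\vu\|_1 \le \e$ yield
\[
\tfrac{c}{\psi^3}\|\vu\|_2 \;\le\; \E|\ip{\va_i}{\vu}| \;\le\; \e + \Big|\tfrac{1}{m}\|A\vu\|_1 - \E|\ip{\va_i}{\vu}|\Big|.
\]
Taking supremum over $T_\e$ and expectation and then inserting the deviation bound gives
\[
\tfrac{c}{\psi^3}\,\E\sup_{\vu\in T_\e}\|\vu\|_2 \;\le\; \e + \tfrac{C\psi}{\sqrt m}\,\E\sup_{\vu\in T}|\ip{\vg}{\vu}|,
\]
and rearranging produces the claimed bound with the exponent $\psi^{3+1}=\psi^4$. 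The main obstacle is step (ii): there is no cheap analogue of the ``$\vh$ is Gaussian'' identity that closed the Gaussian proof, and one is essentially forced to invoke a non-trivial process-comparison principle such as Fernique--Talagrand. Everything else is routine moment arithmetic and bookkeeping of the $\psi$-factors.
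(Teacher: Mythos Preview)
Your proposal is correct and matches the paper's proof essentially step for step: the paper also reruns the Gaussian argument, replaces the first-moment identity by the lower bound $\|\vu\|_2 \le C\psi^3\,\E|\ip{\va_i}{\vu}|$ via the same Cauchy--Schwarz/third-moment computation, keeps symmetrization and contraction unchanged, and then invokes the Fernique--Talagrand comparison theorem to pass from the sub-gaussian vector $\vh=\frac{1}{\sqrt m}\sum_i \e_i\va_i$ to $\vg$ at the cost of a further factor $\psi$. Your identification of Fernique--Talagrand as the one genuinely non-trivial ingredient is exactly the point the paper emphasizes.
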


A proof of this result is an extension of the proof of the Gaussian $M^*$ bound, 
Theorem~\ref{thm: M* general}.
Most of that argument generalizes to sub-gaussian distributions in a standard way.
The only non-trivial new step will be based on the deep {\em comparison theorem 
for sub-gaussian processes} due to X.~Fernique and M.~Talagrand, 
see \cite[Section~2.1]{Talagrand}.
Informally, the result states that any sub-gaussian process is dominated by
a Gaussian process with the same (or larger) increments. 

\begin{theorem}[Fernique-Talagrand's comparison theorem]		\label{thm: fernique-talagrand}
  Let $T$ be an arbitrary set.\footnote{We can assume $T$ to be finite to avoid
    measurability complications, and then proceed by approximation; see e.g. \cite[Section~2.2]{LT}.} 
  Consider a Gaussian random process $(G(\vt))_{\vt \in T}$ and a sub-gaussian
  random process $(H(\vt))_{\vt \in T}$. 
  Assume that $\E G(\vt) = \E H(\vt) = 0$ for all $\vt \in T$. 
  Assume also that for some $M>0$,
  the following increment comparison holds:\footnote{The increment 
    comparison may look better if we replace the $L_2$ norm in the right hand 
    side by $\psitwo$ norm. Indeed, it is easy to see that 
    $\|G(\vs) - G(\vt)\|_\psitwo \asymp (\E \|G(\vs) - G(\vt)\|_2^2)^{1/2}$.}
  $$
  \|H(\vs) - H(\vt)\|_\psitwo \le M \, (\E \|G(\vs) - G(\vt)\|_2^2)^{1/2} \quad \text{for all } \vs, \vt \in T.
  $$
  Then 
  $$
  \E \sup_{\vt \in T} H(\vt) \le C M \, \E \sup_{\vt \in T} G(\vt).
  $$
\end{theorem}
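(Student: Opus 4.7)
The plan is to deduce the result from Talagrand's generic chaining machinery, which characterizes the expected supremum of a Gaussian process in terms of a purely metric complexity functional $\gamma_2$. Given a metric space $(T,d)$, define the $\gamma_2$ functional by
\[
\gamma_2(T,d) := \inf \sup_{\vt \in T} \sum_{n \ge 0} 2^{n/2} \, d(\vt, T_n),
\]
where the infimum is over \emph{admissible sequences} $(T_n)_{n \ge 0}$ of subsets of $T$ with $|T_0|=1$ and $|T_n| \le 2^{2^n}$. The strategy is to sandwich both suprema between suitable $\gamma_2$ functionals on $T$ and then exploit the pointwise comparison of the two canonical metrics.

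First, I would set $d_G(\vs,\vt) := (\E|G(\vs)-G(\vt)|^2)^{1/2}$ and $d_H(\vs,\vt) := \|H(\vs)-H(\vt)\|_{\psitwo}$; these are pseudometrics on $T$, and the hypothesis of the theorem asserts exactly that $d_H \le M \, d_G$. From the definition of $\gamma_2$, monotonicity in the metric gives $\gamma_2(T, d_H) \le M \, \gamma_2(T, d_G)$ for free. So the entire task reduces to the two one-sided inequalities $\E\sup_{\vt} H(\vt) \le C\gamma_2(T,d_H)$ and $\gamma_2(T,d_G) \le C \, \E\sup_{\vt} G(\vt)$.

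The upper bound $\E\sup_{\vt} H(\vt) \lesssim \gamma_2(T,d_H)$ is the (by now standard) generic chaining bound for processes with sub-gaussian increments. I would fix any admissible sequence $(T_n)$ nearly attaining the $\gamma_2(T,d_H)$ infimum, choose nearest-point projections $\pi_n:T \to T_n$, and write each $H(\vt)-H(\pi_0\vt)$ as a telescoping sum $\sum_{n\ge 1}(H(\pi_n\vt)-H(\pi_{n-1}\vt))$. The sub-gaussian increment hypothesis gives tail bounds of the form $\Pr{|H(\pi_n\vt)-H(\pi_{n-1}\vt)| > u \, 2^{n/2} d_H(\pi_n\vt,\pi_{n-1}\vt)} \le 2\exp(-cu^2 2^n)$, and a union bound over the at most $2^{2^n}\cdot 2^{2^{n-1}}$ pairs across levels is absorbed by taking $u$ a constant multiple. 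Summing the chain then delivers the claimed inequality.

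The hard part, and the only genuinely deep ingredient, is the reverse \emph{majorizing measures} direction $\gamma_2(T,d_G) \lesssim \E\sup_{\vt} G(\vt)$ for centered Gaussian processes. This is Talagrand's majorizing measure theorem, proved by an intricate tree-like partitioning of $T$ combined with the Sudakov minoration $\E\sup_{\vt \in S} G(\vt) \gtrsim \e\sqrt{\log|S|}$ applied recursively on well-separated subsets at each scale. I would invoke this theorem as a black box (see \cite[Chapter~2]{Talagrand}) rather than reprove it. Chaining the three inequalities together,
\[
\E \sup_{\vt \in T} H(\vt) \le C \, \gamma_2(T, d_H) \le CM \, \gamma_2(T, d_G) \le C'M \, \E \sup_{\vt \in T} G(\vt),
\]
which is the conclusion of the theorem.
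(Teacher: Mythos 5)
Your proposal is correct and follows essentially the same route as the paper: the paper likewise factors the comparison through a common complexity functional of the index set, combining Fernique's upper bound for the sub-gaussian process with Talagrand's matching lower bound for the Gaussian process, only phrased in the older language of majorizing measures rather than the equivalent $\gamma_2$ generic-chaining functional. The only content you add is making explicit the monotonicity step $\gamma_2(T,d_H)\le M\,\gamma_2(T,d_G)$ and sketching the chaining argument for the sub-gaussian upper bound, both of which are fine.
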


This theorem is a combination of a result of X.~Fernique~\cite{Fernique} that bounds 
$\E \sup_{\vt \in T} H(\vt)$ above by the so-called {\em majorizing measure} of $T$,
and a result of M.~Talagrand~\cite{Talagrand-maj} that bounds $\E \sup_{\vt \in T} G(\vt)$
below by the same majorizing measure of $T$.

\begin{proof}[Proof of Theorem~\ref{thm: M* sub-gaussian}]
Let us examine the proof of the Gaussian $M^*$ bound, Theorem~\ref{thm: M* general},
check where we used Gaussian assumptions, and try to accommodate
sub-gaussian assumptions instead.

\medskip

The first such place is identity \eqref{eq: expectation ip}. We claim that a version 
of it still holds for the sub-gaussian random vector $\va$, namely
\begin{equation}         \label{eq: expectation ip sub-gaussian}
\|\vu\|_2 \le C_0 \psi^3 \, \E_{\va} |\ip{\va}{\vu}|
\end{equation}
where $C_0$ is an absolute constant.\footnote{We should mention that a reverse 
  inequality also holds: by isotropy, one has 
  $\E_{\va} |\ip{\va}{\vu}| \le (\E_{\va} \ip{\va}{\vu}^2)^{1/2} = \|\vu\|_2$.
  However, this inequality will not be used in the proof.}

To check \eqref{eq: expectation ip sub-gaussian}, we can assume that 
$\|\vu\|_2 = 1$ by dividing both sides by $\|\vu\|_2$ if necessary.
Then $Z := \ip{\va}{\vu}$ is sub-gaussian random variable, since according 
to \eqref{eq: sub-gaussian vector} and  \eqref{eq: ai sub-gaussian}, we have
$\|Z\|_\psitwo \le \|\va\|_\psitwo \le \psi$.
Then, since sub-gaussian distributions have moments of all orders (see \cite[Lemma~5.5]{V tutorial}),
we have $(\E Z^3)^{1/3} \le C_1 \|Z\|_\psitwo \le C_1 \psi$,
where $C_1$ is an absolute constant.
Using this together with isotropy and Cauchy-Schwarz inequality, we obtain
$$
1 = \E Z^2 = \E Z^{1/2} Z^{3/2} 
\le (\E Z)^{1/2} (\E Z^3)^{1/2}
\le (\E Z)^{1/2} (C_1 \psi)^{3/2}.
$$
Squaring both sides implies \eqref{eq: expectation ip sub-gaussian}, 
since we assumed that $\|\vu\|_2 = 1$.

The next steps in the proof of Theorem~\ref{thm: M* general} --
symmetrization and contraction -- go through for sub-gaussian
distributions without change. So \eqref{eq: sym contr applied} is still valid in our case.

Next, the random vector
$$
\vh := \frac{1}{\sqrt{m}} \sum_{i=1}^m \e_i \va_i
$$
is no longer Gaussian as in the proof of Theorem~\ref{thm: M* general}. 
Still, $\vh$ is sub-gaussian with 
\begin{equation}         \label{eq: h sub-gaussian}
\|\vh\|_\psitwo \le C_2 \psi
\end{equation}
due to the approximate rotation invariance of sub-gaussian distributions, 
see \cite[Lemma~5.9]{V tutorial}.

In the last step of the argument, we need to replace the sub-gaussian random vector $\vh$
by the Gaussian random vector $\vg \sim N(0,I_n)$, i.e. prove an inequality of the form
$$
\E \sup_{\vu \in T} |\ip{\vh}{\vu}| \lesssim \E \sup_{\vu \in T} |\ip{\vg}{\vu}|.
$$
This can be done by applying the comparison inequality of 
Theorem~\ref{thm: fernique-talagrand} for the processes
$$
H(\vu) = \ip{\vh}{\vu} \quad \text{and} \quad G(\vu) = \ip{\vg}{\vu}, \quad \vu \in T \cup (-T).
$$ 
To check the increment inequality, we can use \eqref{eq: h sub-gaussian}, which yields
$$
\|H(\vu) - H(\vv)\|_\psitwo 
= \|\ip{\vh}{\vu-\vv}\|_\psitwo
\le \|\vh\|_\psitwo \, \|\vu - \vv\|_2
\le C_2 \psi \, \|\vu - \vv\|_2.
$$
On the other hand, 
$$
(\E \|G(\vu) - G(\vv)\|_2^2)^{1/2} = \|\vu - \vv\|_2.
$$
Therefore, the increment inequality in Theorem~\ref{thm: fernique-talagrand}
holds with $M = C_2 \psi$. It follows that 
$$
\E \sup_{\vu \in T \cup (-T)} \ip{\vh}{\vu} \le C_3 \psi \, \E \sup_{\vu \in T \cup (-T)} \ip{\vg}{\vu}.
$$
This means that
$$
\E \sup_{\vu \in T} |\ip{\vh}{\vu}| \le C_3 \psi \, \E \sup_{\vu \in T} |\ip{\vg}{\vu}|
$$
as claimed.

Replacing all Gaussian inequalities by their sub-gaussian counterparts
discussed above, we complete the proof just like in Theorem~\ref{thm: M* general}.
\end{proof}

\subsection{Estimation from sub-gaussian linear observations}

It is now straightforward to generalize all recovery results we developed 
before from Gaussian to sub-gaussian observations. So our observations are now
$$
y_i = \ip{\va_i}{x} + \vnu_i, \quad i=1,\ldots,m
$$
where $\va_i$ are i.i.d., mean zero, isotropic and {\em sub-gaussian} 
random vectors in $\R^n$. 
As in Theorem~\ref{thm: M* sub-gaussian}, we control the sub-gaussian norm with
the parameter $\psi>1$, choosing it so that 
$$
\|\va_i\|_\psitwo \le \psi, \quad i=1,\ldots,m.
$$
We can write observations in the matrix form 
as in \eqref{eq: noisy observations}, i.e. 
$$
\vy = A\vx + \vnu,
$$
where $A$ is the $m \times n$ matrix with rows $\va_i$. As before,
we assume some control on the error: 
$$
\frac{1}{m} \|\vnu\|_1 
= \frac{1}{m} \sum_{i=1}^m |\nu_i| \le \e.
$$

Let us state a version of Theorem~\ref{thm: estimation noisy feasibility}
for sub-gaussian observations. Its proof is the same, except we use
the sub-gaussian $M^*$ bound, Theorem~\ref{thm: M* sub-gaussian} where 
previously a Gaussian $M^*$ bound was used.

\begin{theorem}[Estimation from sub-gaussian observations]	
  Choose $\xhat$ to be any vector satisfying
  $$  
  \xhat \in K \quad \text{and} \quad \frac{1}{m} \|A\xhat - \vy\|_1 \le \e.
  $$  
  Then 
  $$
  \E \sup_{\vx \in K} \|\xhat-\vx\|_2 \le C \psi^4 \left( \frac{w(K)}{\sqrt{m}} + \e \right).
  \hfill \qed
  $$
\end{theorem}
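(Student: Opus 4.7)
The plan is to imitate the proof of Theorem~\ref{thm: estimation noisy feasibility} verbatim, substituting the sub-gaussian $M^*$ bound (Theorem~\ref{thm: M* sub-gaussian}) for its Gaussian counterpart. All of the heavy lifting has already been done in Theorem~\ref{thm: M* sub-gaussian}, so the argument here is essentially a packaging exercise.

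Concretely, I would set $T := K - K$ and apply Theorem~\ref{thm: M* sub-gaussian} with the noise parameter $2\e$ in place of $\e$. Two things need to be checked. First, that the difference vector $\vu := \xhat - \vx$ lies in $T_{2\e}$: since $\xhat, \vx \in K$, we have $\vu \in K-K = T$, and by the triangle inequality
$$
\frac{1}{m} \|A\vu\|_1 \;\le\; \frac{1}{m} \|A\xhat - \vy\|_1 + \frac{1}{m} \|\vnu\|_1 \;\le\; 2\e,
$$
where the first term is bounded by the feasibility constraint on $\xhat$ and the second by the sub-gaussian analogue of the noise assumption \eqref{eq: noisy observations}. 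Second, that the right-hand side of Theorem~\ref{thm: M* sub-gaussian} for $T = K-K$ is controlled by $w(K)$: by symmetry of $T$ and the definition \eqref{eq: mean width} of mean width,
$$
\E \sup_{\vu \in T} |\ip{\vg}{\vu}| \;=\; \E \sup_{\vu \in K-K} \ip{\vg}{\vu} \;=\; w(K).
$$

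Combining these two observations with Theorem~\ref{thm: M* sub-gaussian} yields
$$
\E \sup_{\vx \in K} \|\xhat - \vx\|_2 \;\le\; \E \sup_{\vu \in T_{2\e}} \|\vu\|_2 \;\le\; C \psi^4 \Bigl( \frac{w(K)}{\sqrt{m}} + 2\e \Bigr),
$$
and, after absorbing the harmless factor of $2$ into the constant $C$, the desired bound.

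Since the scaffolding is identical to Theorem~\ref{thm: estimation noisy feasibility}, there is essentially no obstacle in this proof: the entire difficulty was already absorbed into establishing Theorem~\ref{thm: M* sub-gaussian} (whose proof in turn rested on the Fernique--Talagrand comparison theorem). The only conceptual point worth flagging is that the passage from $\e$ to $2\e$ in the application of the sub-gaussian $M^*$ bound is forced by the triangle inequality above, but this is exactly parallel to the Gaussian case.
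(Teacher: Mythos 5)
Your proposal is correct and coincides with the paper's own argument: the paper proves this result by running the proof of Theorem~\ref{thm: estimation noisy feasibility} verbatim with the sub-gaussian $M^*$ bound (Theorem~\ref{thm: M* sub-gaussian}) in place of the Gaussian one, which is exactly what you do. The bookkeeping with $T = K-K$, the passage to $2\e$ via the triangle inequality, and the identification of the supremum with $w(K)$ by symmetry are all as in the paper.
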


In a similar fashion, one can generalize all other estimation results 
established before to sub-gaussian observations. We leave this to the 
interested reader.

\section{Exact recovery}					\label{s: exact}

In some situations, one can hope to estimate vector $\vx \in K$ 
from $\vy$ {\em exactly}, without any error. 
Such results form the core of the area of {\em compressed sensing} \cite{DDEK, Kutyniok, FR}.
Here we will present an approach to exact recovery based on Y.~Gordon's ``escape through a mesh'' theorem \cite{Gordon}.
This argument goes back to \cite{RV CPAM} for the set of sparse vectors, it was further developed
in  in \cite{Stojnic, Oymak-Hassibi} and was pushed forward for general feasible sets 
in \cite{CRPW, ALPV, Tropp}. 

In this tutorial we will present
the most basic result; the reader will find a more complete picture and many more examples in 
the papers just cited. 

We will work here with Gaussian observations 
$$
\vy = A\vx,
$$
where $A$ is an $m \times n$ Gaussian random matrix. 
This is the same model as we considered 
in Section~\ref{s: estimation linear}.

\subsection{Exact recovery condition and the descent cone}
 
When can $\vx$ be inferred from $\vy$ exactly? 
Recall that we only know two things about $\vx$ -- that it lies in the feasible set $K$ 
and in the affine subspace 
$$
E_{\vx} : = \{ \vx' : \; A\vx' = \vy \}.
$$
This two pieces of information determine $\vx$ uniquely if and only if these two 
sets intersect at the single point $\vx$:
\begin{equation}         \label{eq: KEx}
K \cap E_{\vx} = \{x\}.
\end{equation}
Notice that this situation would go far beyond the $M^*$ bound
on the diameter of $K \cap E$ (see Theorem~\ref{thm: M*}) -- 
indeed, in this case the diameter would equal zero!

How can this be possible? 
Geometrically, the exact recovery condition \eqref{eq: KEx} 
states that {\em the affine subspace $E_{\vx}$ 
is tangent to the set $K$ at the point $\vx$}; see Figure~\ref{fig: KEx} 
for illustration. 

\begin{figure}[htp]			
  \centering 
  \begin{subfigure}[b]{0.4\textwidth}
    \includegraphics[height=3cm]{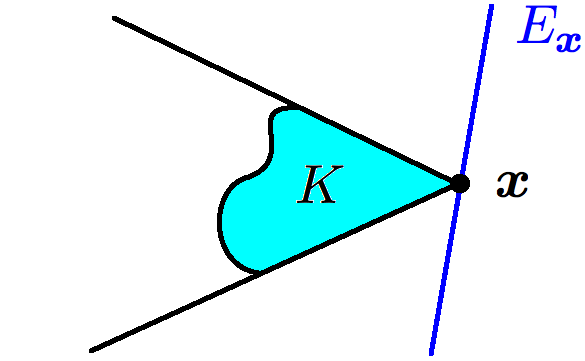} 
    \caption{Exact recovery condition \eqref{eq: KEx}: affine subspace $E_{\vx}$ 
		is tangent to $K$ at $\vx$}
    \label{fig: KEx}
  \end{subfigure}
  \qquad \qquad
  \begin{subfigure}[b]{0.4\textwidth}
    \includegraphics[height=3cm]{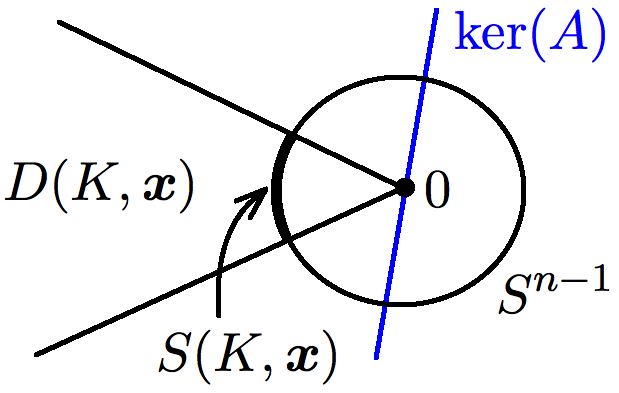} 
    \caption{Picture translated by $-\vx$: subspace $\ker(A)$ 
		is tangent to descent cone $D(K,\vx)$ at $0$}
    \label{fig: descent}
  \end{subfigure}
  \caption{Illustration of the exact recovery condition \eqref{eq: KEx}}	
\end{figure}

This condition is local. Assuming that $K$ is convex
for better understanding, we see that the tangency condition depends on the shape 
of $K$ in an infinitesimal neighborhood of $\vx$, while the global geometry of $K$ 
is irrelevant. So we would not lose anything if we replace $K$ by  
the {\em descent cone} at point $x$, see Figure~\ref{fig: descent}. 
This set is formed by the rays emanating 
from $\vx$ into directions of points from $K$: 
$$
D(K, \vx) := \{ t(\vz-\vx) :\; \vz \in K, \, t \ge 0 \}.
$$

Translating by $-\vx$, can we rewrite the exact recovery condition \eqref{eq: KEx} as 
$$
(K-\vx) \cap (E_{\vx}-\vx) = \{0\}
$$
Replacing $K-\vx$ by the descent cone (a bigger set) and
noting that $E_{\vx}-\vx = \ker(A)$, we rewrite this again as 
$$
D(K, \vx) \cap \ker(A) = \{0\}.
$$
The descent cone can be determined by its intersection with the unit sphere,
i.e. by\footnote{In the definition \eqref{eq: SKx}, we adopt the convention that $0/0=0$.}
\begin{equation}         \label{eq: SKx}
S(K,x) := D(K, \vx) \cap S^{n-1}
= \Big\{ \frac{\vz-\vx}{\|\vz-\vx\|_2} :\; \vz \in K \Big\}.
\end{equation}
Thus we arrive at the following equivalent form of 
the exact recovery condition \eqref{eq: KEx}:
$$
S(K, \vx) \cap \ker(A) = \varnothing;
$$
see Figure~\ref{fig: descent} for an illustration.

\subsection{Escape through a mesh, and implications for exact recovery }

It remains to understand under what conditions 
the random subspace $\ker A$ misses 
a given subset $S = S(K,\vx)$ of the unit sphere. 
There is a remarkably sharp result in asymptotic convex geometry that
answers this question for general subsets $S$. This is the theorem on 
{\em escape through a mesh}, which is due to Y.~Gordon \cite{Gordon}.
Similarly to the other results we saw before, this theorem depends on the 
{\em mean width} of $S$, defined as\footnote{The only (minor) difference 
with our former definition \eqref{eq: mean width} of the mean width  
is that we take supremum over $S$ instead of $S-S$, so $\bar{w}(S)$
is a smaller quantity. The reason we do not need to consider $S-S$ because 
we already subtracted $\vx$ in the definition of the descent cone.}
$$
\bar{w}(S) = \E \sup_{\vu \in S} \ip{\vg}{\vu}, 
\quad \text{where} \quad \vg \sim N(0,I_n).
$$

\begin{theorem}[Escape through a mesh]				\label{thm: Gordon}
  Let $S$ be a fixed subset of $S^{n-1}$.
  Let $E$ be a random subspace of $\R^n$ of a fixed codimension $m$, 
  drawn from the Grassmanian $G_{n,n-m}$ according to the Haar measure. 
  Assume that 
  $$
  \bar{w}(S) < \sqrt{m}.
  $$
  Then 
  $$
  S \cap E = \varnothing
  $$
  with high probability, namely $1 - 2.5 \exp \big[ - (m/\sqrt{m+1} - \bar{w}(S))^2/18 \big]$.
\end{theorem}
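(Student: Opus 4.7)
The plan is to realize the random codimension-$m$ subspace $E$ as the kernel of an $m\times n$ Gaussian matrix $A$ with i.i.d.\ $N(0,1)$ entries; by rotation invariance, $\ker A$ has exactly the Haar distribution on $G_{n,n-m}$. The condition $S \cap E = \varnothing$ then becomes $A\vu \neq 0$ for every $\vu \in S$, which in turn is implied by the quantitative statement $\min_{\vu \in S} \|A\vu\|_2 > 0$. So the goal is to produce a strictly positive lower bound on this minimum with high probability.

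The key trick is to expand the Euclidean norm as a supremum: $\|A\vu\|_2 = \sup_{\vv \in S^{m-1}} \ip{A\vu}{\vv}$. This recasts the problem as controlling the min-max of the centered Gaussian process $X_{\vu,\vv} := \ip{A\vu}{\vv}$ indexed by $(\vu,\vv) \in S \times S^{m-1}$. I will introduce the auxiliary process
$$
Y_{\vu,\vv} := \ip{\vg}{\vu} + \ip{\vh}{\vv}, \qquad \vg \sim N(0,I_n), \; \vh \sim N(0,I_m) \text{ independent}.
$$
The point is that $Y$ is separable in $\vu$ and $\vv$, so its min-max decouples:
$$
\min_{\vu \in S} \max_{\vv \in S^{m-1}} Y_{\vu,\vv} = \|\vh\|_2 + \min_{\vu \in S} \ip{\vg}{\vu}.
$$
By the symmetry $\vg \stackrel{d}{=} -\vg$, the second term has expectation $-\bar w(S)$, while $\E \|\vh\|_2 \geq m/\sqrt{m+1}$ from the standard bound on the $\chi_m$ mean.

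The heart of the argument is now to apply Gordon's Gaussian min-max comparison inequality to conclude $\E \min_{\vu} \max_{\vv} X_{\vu,\vv} \geq \E \min_{\vu} \max_{\vv} Y_{\vu,\vv}$. For this I must verify two increment inequalities: on each slice (fixed $\vu$), the $Y$-increment should be at most the $X$-increment, and across slices ($\vu \neq \vu'$), the $Y$-increment should be at least the $X$-increment. A direct computation on unit vectors gives $\E(X_{\vu,\vv} - X_{\vu,\vv'})^2 = 2 - 2\ip{\vv}{\vv'} = \E(Y_{\vu,\vv} - Y_{\vu,\vv'})^2$, so the slice condition holds with equality. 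For the across-slice comparison the difference of increments factors as $2(1-\ip{\vu}{\vu'})(1-\ip{\vv}{\vv'}) \geq 0$, which is the clean identity that makes everything work. This is the only non-routine step; the rest is mechanical.

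Combining the pieces, $\E \min_{\vu \in S} \|A\vu\|_2 \geq m/\sqrt{m+1} - \bar w(S) > 0$ by hypothesis. To upgrade from expectation to high probability, I would use Gaussian concentration: the map $A \mapsto \min_{\vu \in S} \|A\vu\|_2$ is $1$-Lipschitz in the Frobenius norm on $\R^{m\times n}$, since $\bigl|\|A\vu\|_2 - \|B\vu\|_2\bigr| \leq \|(A-B)\vu\|_2 \leq \|A-B\|_F$ for unit $\vu$, and the $\min$ preserves the Lipschitz constant. The Gaussian concentration inequality then yields sub-Gaussian deviations around the mean, giving the stated bound $1 - 2.5\exp\bigl[-(m/\sqrt{m+1} - \bar w(S))^2/18\bigr]$ after tracking constants (or, more sharply, by invoking Gordon's original tail version of the min-max inequality directly). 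The main obstacle throughout is the increment verification in the third paragraph; once that factorization is spotted, everything else follows from standard Gaussian tools.
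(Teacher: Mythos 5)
Your proposal is correct, and it is in fact the genuine proof of the theorem --- which the paper itself does not supply. The chapter states the escape theorem as a citation to Gordon and only derives a \emph{weaker} form from the general $M^*$ bound (Theorem~\ref{thm: M* general}): taking $T=S$ and $\e=0$ there yields $\Pr{S \cap E \ne \emptyset} \le \sqrt{8\pi/m}\,\big(\bar{w}(S)+\sqrt{2/\pi}\big)$, which needs $\bar{w}(S) \ll \sqrt{m}$ and gives only a polynomially small failure probability. Your route --- realizing $E=\ker A$ for a Gaussian matrix $A$, writing $\|A\vu\|_2$ as a maximum over $\vv \in S^{m-1}$, and comparing the process $\ip{A\vu}{\vv}$ with the decoupled process $\ip{\vg}{\vu}+\ip{\vh}{\vv}$ via Gordon's min--max inequality --- is exactly the argument of \cite{Gordon}, and it is what buys the sharp, constant-free threshold $\bar{w}(S)<\sqrt{m}$ together with the exponential probability bound. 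Your increment verification is right: on a slice (fixed $\vu$) the two squared increments coincide, and across slices their difference is $2(1-\ip{\vu}{\vu'})(1-\ip{\vv}{\vv'}) \ge 0$, which is precisely the condition Gordon's comparison requires. Two small points worth recording: first, the comparison theorem is usually stated for finite index sets, so one should discretize $S \times S^{m-1}$ and pass to the limit; second, the concentration step actually yields the stronger bound $1-\exp\big[-(m/\sqrt{m+1}-\bar{w}(S))^2/2\big]$, since $A \mapsto \inf_{\vu \in S}\|A\vu\|_2$ is $1$-Lipschitz in the Frobenius norm and its mean is at least $m/\sqrt{m+1}-\bar{w}(S)$; the constants $2.5$ and $18$ in the statement come from Gordon's original tail form of the min--max inequality and are not the best obtainable from this route.
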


Before applying this result to high dimensional estimation, let us see how 
a slightly weaker result follows from the general $M^*$ bound, Theorem~\ref{thm: M* general}.
Indeed, applying the latter theorem for $T=S$, $E=\ker(A)$ and $\e=0$, we obtain 
\begin{equation}         \label{eq: Gordon weaker}
\E \sup_{\vu \in S \cap E} \|\vu\|_2 
  \le \sqrt{\frac{8\pi}{m}} \, \E \sup_{\vu \in S} |\ip{\vg}{\vu}|
  \le \sqrt{\frac{8\pi}{m}} \, \bar{w}(S).
\end{equation}
Since $S \subset S^{n-1}$, the supremum in the left hand side equals $1$ 
when $S \cap E \ne \emptyset$ and zero otherwise. Thus the expectation in \eqref{eq: Gordon weaker}
equals $\Pr{ S \cap E \ne \emptyset }$. Further, one can easily check that
$\E \sup_{\vu \in S} |\ip{\vg}{\vu}| \le \bar{w}(S) + \sqrt{2/\pi}$, see \cite[Proposition~2.1]{PV CPAM}.
Thus we obtain 
$$
\Pr{ S \cap E \ne \emptyset } 
  \le \sqrt{\frac{8\pi}{m}} \Big( \bar{w}(S) + \sqrt{\frac{2}{\pi}} \Big).
$$
In other words, $S \cap E = \emptyset$ with high probability if the codimension 
$m$ is sufficiently large so that $\bar{w}(S) \ll \sqrt{m}$.
Thus we obtain a somewhat weaker form of Escape Theorem~\ref{thm: Gordon}.

\medskip

Now let us apply Theorem~\ref{thm: Gordon} for the descent $S = S(K,x)$ and $E = \ker(A)$. 
We conclude by the argument above that the exact recovery condition \eqref{eq: KEx}
holds with high probability if 
$$
m > \bar{w}(S)^2.
$$

How can we {\em algorithmically} recover $\vx$ in these circumstances? 
We can do the same as in Section~\ref{s: based on M*}, either
using the feasibility program \eqref{eq: feasibility} or, better yet, 
the optimization program \eqref{eq: optimization}. The only difference
is that the diameter of the intersection is now zero, so the recovery is exact. 
The following is an exact version of Theorem~\ref{thm: estimation optimization}.

\begin{theorem}[Exact recovery from linear observations]	\label{thm: exact recovery}
  Choose $\xhat$ to be a solution of the program 
  $$  
  \text{minimize } \|\vx'\|_K \quad \text{subject to} \quad A\vx' = \vy.
  $$  
  Assume that the number of observations satisfies  
  \begin{equation}         \label{eq: m wbar}
  m > \bar{w}(S)^2
  \end{equation}
  where $S = S(K,x)$ is the spherical part of the descent cone of $K$, defined 
  in \eqref{eq: SKx}.
  Then 
  $$
  \xhat = \vx
  $$
  with high probability (the same as in Theorem~\ref{thm: Gordon}).		\qed
\end{theorem}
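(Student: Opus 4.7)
The plan is to combine the optimization-program reasoning from Theorem~\ref{thm: estimation optimization} with Gordon's escape through a mesh theorem (Theorem~\ref{thm: Gordon}). The heart of the matter is already articulated in the discussion preceding the statement: exact recovery amounts to the tangency condition $K \cap E_{\vx} = \{\vx\}$, which is in turn equivalent to $S(K,\vx) \cap \ker(A) = \varnothing$. So the proof really has two ingredients: verifying this equivalence in the language of the optimization program, and then quoting the escape theorem.

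First I would handle the geometric reduction. Assume (as in the setup for Theorem~\ref{thm: estimation optimization}) that $K$ is bounded, star-shaped, with nonempty interior, so that $K$ is the $1$-sublevel set of $\|\cdot\|_K$. Let $\xhat$ be a minimizer of $\|\vx'\|_K$ subject to $A\vx' = \vy$. Since $\vx$ itself is feasible and $\|\vx\|_K \le 1$, we get $\|\xhat\|_K \le \|\vx\|_K \le 1$, hence $\xhat \in K$. Next, since $A\xhat = \vy = A\vx$, the difference $\vh := \xhat - \vx$ lies in $\ker(A)$. Now suppose for contradiction that $\vh \ne 0$. Because $\xhat \in K$, the normalized vector $\vh/\|\vh\|_2$ is exactly of the form $(\vz-\vx)/\|\vz-\vx\|_2$ with $\vz = \xhat \in K$, so it belongs to the spherical descent set $S(K,\vx)$ defined in \eqref{eq: SKx}. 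Therefore $\vh/\|\vh\|_2 \in S(K,\vx) \cap \ker(A)$.

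Second, I would apply Gordon's theorem to rule this out. Because $A$ has i.i.d.\ $N(0,1)$ entries, rotation invariance implies that $\ker(A)$ is distributed uniformly on $G_{n,n-m}$ according to the Haar measure (and has codimension exactly $m$ almost surely). The hypothesis \eqref{eq: m wbar} gives $\bar{w}(S(K,\vx)) < \sqrt{m}$, which is precisely the input required by Theorem~\ref{thm: Gordon} with $S = S(K,\vx)$ and $E = \ker(A)$. Thus, with the high probability quantified there, $S(K,\vx) \cap \ker(A) = \varnothing$, contradicting the existence of the nonzero $\vh$ constructed above. Hence $\xhat = \vx$ on the same event.

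There is no real obstacle here beyond making sure the two bookkeeping points are clean: that $\xhat$ is actually forced into $K$ (which is immediate from the star-shaped assumption via the Minkowski functional, exactly as in the proof of Theorem~\ref{thm: estimation optimization}), and that $\ker(A)$ is genuinely Haar-distributed on the Grassmannian so that Theorem~\ref{thm: Gordon} applies verbatim. Once these are in place, the escape theorem does the work, and the rest is a one-line contradiction.
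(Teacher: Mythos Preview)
Your proposal is correct and follows essentially the same route as the paper: the paper's ``proof'' is the discussion immediately preceding the theorem, which reduces exact recovery to the condition $S(K,\vx)\cap\ker(A)=\varnothing$ and then invokes Gordon's escape theorem (Theorem~\ref{thm: Gordon}) together with the optimization-program argument of Theorem~\ref{thm: estimation optimization}. You have simply made the contradiction step (normalizing $\vh=\xhat-\vx$ to land in $S(K,\vx)$) and the two bookkeeping points ($\xhat\in K$ via the Minkowski functional, and the Haar distribution of $\ker(A)$) more explicit than the paper does.
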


Note the familiar condition \eqref{eq: m wbar} on $m$ which we 
have seen before, see e.g. Section~\ref{s: algorithmic}.
Informally, it states the following: 
\begin{quote}
{\em Exact recovery is possible when the number of 
measurements exceeds the effective dimension of the descent cone}. 
\end{quote}
Remarkably, the condition \eqref{eq: m wbar} does not have 
absolute constant factors which we had in results before.

\subsection{Application: exact sparse recovery}

Let us illustrate how Theorem~\ref{thm: exact recovery} works 
for {\em exact sparse recovery}. 
Assume that $\vx$ is $s$-sparse, i.e. it has at most $s$ non-zero coefficients.
For the feasible set, we can choose $K := \|\vx\|_1 B_1^n = \{ \vx' :\; \|\vx'\|_1 \le \|\vx\|_1 \}$.
One can write down accurately an expression for the descent cone, 
and derive a familiar bound on the mean width of $S = S(K,x)$: 
$$
\bar{w}(S) \le C \sqrt{s \log (2n/s)}.
$$
This computation goes back to \cite{RV CPAM}; see that paper and also 
\cite{Stojnic, CRPW, ALMT} for estimates with explicit absolute constants.

We plug this into Theorem~\ref{thm: exact recovery}, where we replace
$\|\vx'\|_K$ in the optimization problem by the proportional quantity $\|\vx'\|_1$. 
This leads to the following exact version of Corollary~\ref{cor: sparse recovery}:

\begin{theorem}[Exact sparse recovery]		\label{thm: exact sparse recovery}
  Assume that an unknown vector $\vx \in \R^n$ is $s$-sparse. 
  Choose $\xhat$ to be a solution to the convex program 
  $$
  \text{minimize } \|\vx'\|_1 \quad \text{subject to} \quad A\vx' = \vy.
  $$
  Assume that the number of observations satisfies  
  $m > C s \log n$. Then 
  $$
  \xhat = \vx
  $$
  with high probability, namely $1 - 3 e^{-m}$.		\qed
\end{theorem}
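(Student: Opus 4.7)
The plan is to apply Theorem~\ref{thm: exact recovery} to the feasible set $K := \|\vx\|_1 B_1^n = \{\vx' \in \R^n :\; \|\vx'\|_1 \le \|\vx\|_1\}$, which is convex, star-shaped, has non-empty interior, and contains $\vx$ by construction. First I would observe that the Minkowski functional of $K$ is just a rescaled $\ell_1$-norm: $\|\vx'\|_K = \|\vx'\|_1 / \|\vx\|_1$. Since $\|\vx\|_1$ is a fixed positive constant, the convex program $\text{minimize }\|\vx'\|_1$ subject to $A\vx' = \vy$ has exactly the same optimizers as $\text{minimize }\|\vx'\|_K$ subject to $A\vx' = \vy$. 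So the two programs in Theorem~\ref{thm: exact recovery} and the target theorem coincide.

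All that remains is to estimate the mean width of the spherical descent cone $S = S(K,\vx)$ and to check the probability tail in Theorem~\ref{thm: Gordon}. For the geometric estimate, I would invoke the bound
\begin{equation*}
\bar{w}(S) \le C_0 \sqrt{s \log(2n/s)}
\end{equation*}
already quoted in the paragraph above the theorem and attributed to \cite{RV CPAM}. For completeness I would sketch the argument: the descent directions at an $s$-sparse $\vx$ are the vectors $\vv$ satisfying $\sum_{i \in I} \sign(x_i) v_i + \sum_{i \notin I} |v_i| \le 0$, where $I = \mathrm{supp}(\vx)$. Bounding $\bar{w}(S)$ by the distance from a standard Gaussian vector to the polar cone (i.e.\ to the scaled subdifferential $\R_+ \cdot \partial\|\cdot\|_1(\vx)$) reduces the problem to the coordinatewise calculation $\E \sum_{i \notin I} (|g_i| - t)_+^2 + \E \sum_{i \in I} (g_i - t\,\sign(x_i))^2$ optimized over $t \ge 0$; choosing $t \asymp \sqrt{\log(2n/s)}$ yields the stated bound.

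Finally I would check the constants in Theorem~\ref{thm: Gordon}. Assuming the absolute constant $C$ in the hypothesis $m > Cs\log n$ is taken large enough relative to $C_0$, we get $\sqrt{m} \ge 2\bar{w}(S) + 1$, so the exponent in the escape-through-a-mesh probability satisfies
\begin{equation*}
\bigl(m/\sqrt{m+1} - \bar{w}(S)\bigr)^2/18 \ge m,
\end{equation*}
and $2.5 e^{-m} \le 3 e^{-m}$. Combining with the tangency argument from Theorem~\ref{thm: exact recovery} gives $\xhat = \vx$ with probability at least $1 - 3 e^{-m}$.

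The only genuinely nontrivial step is the descent-cone mean width bound; everything else is plugging prior results together and bookkeeping. Since the estimate $\bar{w}(S) \lesssim \sqrt{s\log(2n/s)}$ is cited in the paper, the ``hard part'' reduces to tuning the absolute constant $C$ so that the tail from Gordon's theorem collapses cleanly into the form $1 - 3e^{-m}$.
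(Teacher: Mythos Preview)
Your approach is exactly the one the paper intends: take $K = \|\vx\|_1 B_1^n$, observe that $\|\cdot\|_K$ is a positive multiple of $\|\cdot\|_1$ so the two programs coincide, invoke the cited bound $\bar{w}(S) \le C_0\sqrt{s\log(2n/s)}$, and plug into Theorem~\ref{thm: exact recovery}. The paper treats this as immediate from the preceding paragraph and marks the theorem with \qed; your added sketch of the descent-cone width via the distance to $\R_+\cdot\partial\|\cdot\|_1(\vx)$ is the standard computation behind the citation.

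One arithmetic slip in your tail bookkeeping: the inequality $(m/\sqrt{m+1} - \bar{w}(S))^2/18 \ge m$ cannot hold, since $m/\sqrt{m+1} < \sqrt{m}$ forces the left side to be strictly less than $m/18$ regardless of $\bar{w}(S)$. From Theorem~\ref{thm: Gordon} as stated you only get an exponent of order $cm$ for some absolute $c<1$ (roughly $c\approx 1/72$ when $\bar{w}(S)\le \sqrt{m}/2$). The exact form $1-3e^{-m}$ in the theorem statement either absorbs such a constant silently or appeals to a sharper version of the escape theorem; the paper does not spell this out.
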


Due to the remarkable sharpness of Gordon's theorem, one may hope to 
obtain {\em sharp} conditions on the number of observations $m$, without any losses in 
absolute constants. This was done in \cite{DT} for the sparse recovery problem
(using geometry of polytopes rather than Gordon's theorem), and 
more recently in \cite{ALMT} for general feasible cones. The latter paper 
proposes a notion of statistical dimension, which is a close relative of mean width, 
and establishes a variant of Gordon's theorem for statistical dimension.

\section{Low-rank matrix recovery and matrix completion}		\label{s: matrix}

\subsection{Background: matrix norms}
The theory we developed so far concerns estimation of {\em vectors} in $\R^n$.
It should not be surprising that this theory can also be applied for {\em matrices}.
Matrix estimation problems were studied recently, in particular in \cite{CR, CT, KMO, CLMR, Recht}. 

Let us recall some basic facts about matrices and their norms. 
We can identify $d_1 \times d_2$ matrices with vectors in $\R^{d_1 \times d_2}$.
The $\ell_2$ norm in $\R^{d_1 \times d_2}$ is then nothing else than
{\em Frobenius} (or Hilbert-Schmidt) norm of matrices: 
$$
\|X\|_F = \Big( \sum_{i=1}^{d_1} \sum_{j=1}^{d_2} |X_{ij}|^2 \Big)^{1/2}.
$$
The inner product in $\R^{d_1 \times d_2}$ can be written in matrix form as follows:
$$
\ip{X}{Y} = \tr(X^\tran Y).
$$

Denote  $d = \min(d_1, d_2)$. 
Let
$$
s_1(X) \ge s_2(X) \ge \cdots \ge s_d(X) \ge 0
$$
denote the {\em singular values} of $X$. Then Frobenius norm 
has the following spectral representation:
$$
\|X\|_F = \Big( \sum_{i=1}^ds_i(X)^2 \Big)^{1/2}.
$$
Recall also the {\em operator norm} of $X$, which is
$$
\|X\| = \max_{\vu \in \R^n \setminus \{0\}} \frac{\|X\vu\|_2}{\|\vu\|_2}
= \max_{i=1,\ldots,d} s_i(X).
$$
Finally, the {\em nuclear norm} of $X$ is defined as
$$
\|X\|_* = \sum_{i=1}^d s_i(X).
$$

Spectrally, i.e. on the level of singular values,
the nuclear norm is a version of $\ell_1$ norm for matrices,
the Frobenius norm is a version of $\ell_2$ norm for matrices,
and the operator norm is a version of $\ell_\infty$ norm for matrices.
In particular, the following inequality holds: 
$$
\|X\| \le \|X\|_F \le \|X\|_*.
$$
The reader should be able to derive many other useful inequalities in a similar 
way, for example
\begin{equation}         \label{eq: nuclear frobenius operator}
\|X\|_* \le \sqrt{\rank(X)} \cdot \|X\|_F, \quad \|X\|_F \le \sqrt{\rank(X)} \cdot \|X\|
\end{equation}
and
\begin{equation}         \label{eq: matrix inner product split}
\ip{X}{Y} \le \|X\| \cdot \|Y\|_*.
\end{equation}

\subsection{Low-rank matrix recovery}

We are ready to formulate a matrix version of the sparse recovery problem 
from Section~\ref{s: sparse recovery}. Our goal is to estimate an unknown 
$d_1 \times d_2$ 
matrix $X$ from $m$ linear observations given by
\begin{equation}         \label{eq: matrix observations}
y_i = \ip{A_i}{X}, \quad i=1,\ldots,m.
\end{equation}
Here $A_i$ are independent $d_1 \times d_2$ Gaussian matrices
with all i.i.d. $N(0,1)$ entries.

There are two natural matrix versions of sparsity. 
The first version is the sparsity of entries. We will be concerned
with the other, spectral, type of sparsity, where there are only a few non-zero 
singular values. This simply means that the matrix has {\em low rank}. 
So let us assume that the unknown matrix $X$ satisfies
\begin{equation}         \label{eq: low rank assumption}
\rank(X) \le r
\end{equation}
for some fixed (and possibly unknown) $r \le n$.

The following is a matrix version of Corollary~\ref{cor: sparse recovery}; 
for simplicity we are stating it in a noise-free setting ($\e=0$).

\begin{theorem}[Low-rank matrix recovery]		\label{thm: low-rank matrix recovery}
  Choose $\Xhat$ to be a solution to the convex program 
  \begin{equation}         \label{eq: minimize nuclear norm}
  \text{minimize } \|X'\|_* \text{ subject to } 
  \ip{A_i}{X'} = y_i, \quad i=1,\ldots,m.
  \end{equation}  
  Then 
  $$
  \E \sup_X \|\Xhat-X\|_F
  \le 4 \sqrt{\pi} \,  \sqrt{\frac{r(d_1+d_2)}{m}} \cdot \|X\|_F.
  $$
  Here the supremum is taken over all $d_1 \times d_2$ matrices $X$ 
  of rank at most $r$.
\end{theorem}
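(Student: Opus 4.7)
The plan is to apply the general $M^*$-based estimation theorem, Theorem~\ref{thm: estimation optimization}, under the natural identification $\R^{d_1 \times d_2} \cong \R^{d_1 d_2}$, which turns the Frobenius norm into the Euclidean norm and the observations $y_i = \ip{A_i}{X}$ into standard Gaussian linear measurements.

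First, I would fix $X$ with $\rank(X) \le r$ and, by homogeneity of the desired bound, normalize $\|X\|_F = 1$. As the feasible set I would take the scaled nuclear-norm ball
\[ K := \sqrt{r}\cdot B_*, \qquad B_* := \{X' \in \R^{d_1\times d_2} : \|X'\|_* \le 1\}. \]
This $K$ is a convex body (hence star-shaped with nonempty interior), and it contains $X$ because $\|X\|_* \le \sqrt{r}\,\|X\|_F = \sqrt{r}$ by \eqref{eq: nuclear frobenius operator}. Its Minkowski functional is proportional to $\|\cdot\|_*$, so the convex program \eqref{eq: minimize nuclear norm} coincides with the optimization problem appearing in Theorem~\ref{thm: estimation optimization}. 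That theorem will then deliver
\[ \E \sup_{X \in K} \|\Xhat - X\|_F \le \frac{C\, w(K)}{\sqrt{m}}. \]

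Next, I would compute $w(K) = \sqrt{r}\,w(B_*)$. The unit nuclear-norm ball is the convex hull of signed rank-one Frobenius-norm-one matrices: $B_* = \conv\{\vu \vv^\tran : \vu \in S^{d_1-1},\, \vv \in S^{d_2-1}\}$. Applying the convexification property \eqref{eq: conv wK} reduces the task to bounding the mean width of this manifold of rank-one matrices, which is precisely the $r=1$ instance of Proposition~\ref{prop: low rank mean width} (quoted in Example~\ref{ex: low rank mean width}). This gives $w(B_*) \le C\sqrt{d_1+d_2}$, and so $w(K) \le C\sqrt{r(d_1+d_2)}$. Substituting and undoing the normalization $\|X\|_F = 1$ produces the claimed inequality; the explicit factor $4\sqrt{\pi}$ appears by tracking the $\sqrt{8\pi}$ constant from Theorem~\ref{thm: M* general} alongside the mean-width constant.

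The main obstacle is the mean-width estimate for the nuclear-norm ball. Unlike the $\ell_1$ case in Section~\ref{s: sparse recovery}, where the extreme points form a finite set and Example~\ref{ex: finite set mean width} applies directly, here the extreme points form a continuous rank-one manifold. The standard resolution uses the duality $\sup_{X \in B_*} \ip{G}{X} = \|G\|$ between the nuclear and operator norms to rewrite the mean width as (essentially) $\E\|G\|$ for a standard $d_1 \times d_2$ Gaussian matrix $G$, and then invokes the classical bound $\E\|G\| \lesssim \sqrt{d_1}+\sqrt{d_2}$ (provable via an $\e$-net argument on the two unit spheres, or via Slepian's or Gordon's comparison). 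This is precisely the content that Proposition~\ref{prop: low rank mean width} is expected to provide.
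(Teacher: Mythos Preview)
Your proposal is correct and follows essentially the same route as the paper: apply Theorem~\ref{thm: estimation optimization} (the paper uses the equivalent Theorem~\ref{thm: estimation noisy optimization} with $\e=0$) to a scaled nuclear-norm ball, then bound $w(B_*)$ via the duality $\sup_{X'\in B_*}\ip{G}{X'}=\|G\|$ and Gordon's bound $\E\|G\|\le\sqrt{d_1}+\sqrt{d_2}$. The only cosmetic difference is that the paper scales by $\|X\|_*$ and applies $\|X\|_*\le\sqrt{r}\,\|X\|_F$ at the end, whereas you normalize $\|X\|_F=1$ and scale by $\sqrt{r}$ at the start. One small caution: your appeal to Proposition~\ref{prop: low rank mean width} at $r=1$ is circular in the paper's logical order (that proposition is \emph{derived} from the nuclear-ball mean-width bound, Proposition~\ref{prop: nuclear norm wK}), but your final paragraph already supplies the correct independent argument, which is exactly Proposition~\ref{prop: nuclear norm wK}.
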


The proof of Theorem~\ref{thm: low-rank matrix recovery} will closely follow 
its vector prototype, that of Theorem~\ref{thm: sparse recovery}; 
we will just need to replace the $\ell_1$ norm by the nuclear norm. 
The only real difference will be in the computation of the {\em mean width of the unit ball 
of the nuclear norm}. This computation will be based on Y.~Gordon's bound 
on the operator norm of Gaussian random matrices, see Theorem~5.32 in \cite{V tutorial}.

\begin{theorem}[Gordon's bound for Gaussian random matrices]		\label{thm: gordon}
  Let $G$ be an $d_1 \times d_2$ matrix whose entries are 
  i.i.d. mean zero random variables. Then
  $$
  \E\|G\| \le \sqrt{d_1} + \sqrt{d_2}.
  $$
\end{theorem}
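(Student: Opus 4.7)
The plan is to realize the operator norm as the supremum of a Gaussian process on the product of two spheres, then dominate it by a much simpler Gaussian process using Slepian's comparison inequality.

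First I would write
\[
\|G\| \;=\; \sup_{\vu \in S^{d_2-1}, \, \vv \in S^{d_1-1}} \ip{\vv}{G\vu} \;=\; \sup_{(\vu,\vv) \in T} X_{\vu,\vv},
\]
where $T := S^{d_2-1} \times S^{d_1-1}$ and $X_{\vu,\vv} := \vv^\tran G \vu = \sum_{i,j} G_{ij} v_i u_j$. Since the $G_{ij}$ are i.i.d.\ $N(0,1)$, $(X_{\vu,\vv})_{(\vu,\vv)\in T}$ is a centered Gaussian process with covariance $\E X_{\vu,\vv} X_{\vu',\vv'} = \ip{\vu}{\vu'}\ip{\vv}{\vv'}$; in particular $\E X_{\vu,\vv}^2 = 1$ and
\[
\E(X_{\vu,\vv} - X_{\vu',\vv'})^2 \;=\; 2 - 2\ip{\vu}{\vu'}\ip{\vv}{\vv'}.
\]

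Next I would introduce the comparison process $Y_{\vu,\vv} := \ip{\vg}{\vu} + \ip{\vh}{\vv}$, where $\vg \sim N(0,I_{d_2})$ and $\vh \sim N(0,I_{d_1})$ are independent. This is a centered Gaussian process on $T$ with
\[
\E(Y_{\vu,\vv} - Y_{\vu',\vv'})^2 \;=\; \|\vu-\vu'\|_2^2 + \|\vv-\vv'\|_2^2 \;=\; 4 - 2\ip{\vu}{\vu'} - 2\ip{\vv}{\vv'}.
\]
The key (and only nontrivial) step is the Slepian increment comparison $\E(X_{\vu,\vv} - X_{\vu',\vv'})^2 \le \E(Y_{\vu,\vv} - Y_{\vu',\vv'})^2$, which after setting $a = \ip{\vu}{\vu'}$ and $b = \ip{\vv}{\vv'}$ reduces to the elementary inequality $a + b - ab \le 1$, i.e.\ $(1-a)(1-b) \ge 0$. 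This holds since $a, b \in [-1,1]$. I expect this algebraic verification to be the main (though short) conceptual step; everything else is bookkeeping.

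By Slepian's inequality (see \cite[Section~3.3]{LT}) the comparison yields
\[
\E\|G\| \;=\; \E \sup_{(\vu,\vv) \in T} X_{\vu,\vv} \;\le\; \E \sup_{(\vu,\vv) \in T} Y_{\vu,\vv}
\;=\; \E \sup_{\vu \in S^{d_2-1}} \ip{\vg}{\vu} + \E \sup_{\vv \in S^{d_1-1}} \ip{\vh}{\vv}.
\]
Each supremum is just the Euclidean norm of the corresponding Gaussian vector, so by \eqref{eq: gaussian norm} the right-hand side is at most $\sqrt{d_2} + \sqrt{d_1}$, which proves the claim.
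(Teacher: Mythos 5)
Your proof is correct. The paper itself gives no proof of this theorem --- it is stated as a cited result (Theorem~5.32 of \cite{V tutorial}) --- and your argument via the product-of-spheres Gaussian process, the increment comparison $\E(X_{\vu,\vv}-X_{\vu',\vv'})^2 \le \E(Y_{\vu,\vv}-Y_{\vu',\vv'})^2$ reducing to $(1-a)(1-b)\ge 0$, and the identity $\E\sup_{\vu\in S^{d-1}}\ip{\vg}{\vu} = \E\|\vg\|_2 \le \sqrt{d}$ is precisely the standard Chevet--Gordon proof given in that reference. Two small points worth making explicit: since $\E X_{\vu,\vv}^2 = 1$ while $\E Y_{\vu,\vv}^2 = 2$, the variances are not equal, so you must invoke the Sudakov--Fernique form of the comparison theorem (which needs only the increment inequality) rather than the classical Slepian lemma --- your write-up does use the increment form, so this is only a matter of naming the right theorem in \cite[Section~3.3]{LT}. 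Also, the hypothesis ``i.i.d.\ mean zero random variables'' in the statement should be read as ``i.i.d.\ $N(0,1)$ entries'' (as the theorem's title and your proof both assume); the bound as stated is false for general mean-zero entries without a variance normalization.
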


\begin{proposition}[Mean width of the unit ball of nuclear norm]		\label{prop: nuclear norm wK}
  Consider the unit ball in the space of $d_1 \times d_2$ matrices
  corresponding to the nuclear norm:
  $$
  B_* := \{ X \in \R^{d_1 \times d_2} :\; \|X\|_* \le 1 \}.
  $$
  Then 
  $$
  w(B_*) \le 2 (\sqrt{d_1} + \sqrt{d_2}).
  $$ 
\end{proposition}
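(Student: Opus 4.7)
The proof is a short chain of three observations, each invoking a tool already established in the excerpt.

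First, since $B_*$ is origin-symmetric, $B_* - B_* = 2 B_*$, so by definition of Gaussian mean width
\[
w(B_*) \;=\; \E \sup_{U \in 2B_*} \ip{G}{U} \;=\; 2\, \E \sup_{U \in B_*} \ip{G}{U},
\]
where $G$ is a $d_1 \times d_2$ matrix with i.i.d.\ $N(0,1)$ entries, interpreted under the identification $\R^{d_1 \times d_2} \cong \R^{d_1 d_2}$.

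Second, I will identify the inner supremum as the operator norm of $G$. By the duality inequality \eqref{eq: matrix inner product split}, for every $U$ with $\|U\|_* \le 1$,
\[
\ip{G}{U} \;\le\; \|G\|\cdot\|U\|_* \;\le\; \|G\|.
\]
Conversely, if $G = \sum_i s_i(G)\, \vu_i \vv_i^{\tran}$ is the singular value decomposition and $U = \vu_1 \vv_1^{\tran}$, then $\|U\|_* = 1$ and $\ip{G}{U} = s_1(G) = \|G\|$. Hence $\sup_{U \in B_*} \ip{G}{U} = \|G\|$.

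Third, I apply Gordon's bound for Gaussian matrices (Theorem~\ref{thm: gordon}) to get $\E \|G\| \le \sqrt{d_1} + \sqrt{d_2}$. Combining the three steps yields $w(B_*) \le 2(\sqrt{d_1} + \sqrt{d_2})$. There is no real obstacle here: the only content is recognizing that nuclear and operator norms are dual on $\R^{d_1 \times d_2}$, which is exactly what \eqref{eq: matrix inner product split} encodes, together with the sharpness witnessed by the top singular vector pair.
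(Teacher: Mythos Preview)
Your proof is correct and follows essentially the same route as the paper's: symmetry of $B_*$ to get $w(B_*) = 2\,\E \sup_{U \in B_*} \ip{G}{U}$, the duality inequality \eqref{eq: matrix inner product split} to bound the supremum by $\|G\|$, and then Gordon's bound. The only difference is that you take the extra step of exhibiting the witness $U = \vu_1 \vv_1^{\tran}$ to show the supremum actually equals $\|G\|$; the paper notes this equality parenthetically but does not need it for the upper bound.
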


\begin{proof}
By definition and symmetry of $B$, we have 
$$
w(B) = \E \sup_{X \in B_*-B_*} \ip{G}{X} = 2 \E \sup_{X \in B_*} \ip{G}{X},
$$
where $G$ is a $d_1 \times d_2$ Gaussian random matrix with $N(0,1)$ entries.
Using inequality \eqref{eq: matrix inner product split} and definition of $B_*$, we obtain
we obtain 
$$
w(B_*) \le 2 \E \sup_{X \in B_*} \|G\| \cdot \|X\|_*
\le 2 \E \|G\|.
$$
(The reader may notice that both these inequalities are in fact equalities, although we do not 
need this in the proof.)
To complete the proof, it remains to apply Theorem~\ref{thm: gordon}.
\end{proof}

Let us mention an immediate consequence of Proposition~\ref{prop: nuclear norm wK},
although it will not be used in the proof of Theorem~\ref{thm: low-rank matrix recovery}.

\begin{proposition}[Mean width of the set of low-rank matrices]		\label{prop: low rank mean width}
  Let 
  $$
  D = \{ X \in \R^{d_1 \times d_2} :\; \|X\|_F = 1, \; \rank(X) \le r \}.
  $$
  Then 
  $$
  w(D) \le 2 \sqrt{2r(d_1+d_2)}.
  $$ 
\end{proposition}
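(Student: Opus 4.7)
The plan is to reduce the statement to the previous Proposition~\ref{prop: nuclear norm wK} by containing $D$ inside a suitably scaled nuclear-norm ball, and then to absorb the sum $\sqrt{d_1}+\sqrt{d_2}$ into $\sqrt{2(d_1+d_2)}$ via Cauchy--Schwarz.

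First, I would use the norm inequality \eqref{eq: nuclear frobenius operator}, which says that for any matrix $X$ of rank at most $r$ one has $\|X\|_* \le \sqrt{r}\,\|X\|_F$. Applied to every $X \in D$, this yields $\|X\|_* \le \sqrt{r}$, hence
\[
D \;\subseteq\; \sqrt{r}\, B_*.
\]
The mean width is monotone under inclusion (since $S \subseteq T$ implies $S-S \subseteq T-T$) and positively homogeneous (since $(\lambda K)-(\lambda K) = \lambda(K-K)$), so
\[
w(D) \;\le\; w(\sqrt{r}\,B_*) \;=\; \sqrt{r}\, w(B_*).
\]

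Next, I would plug in the bound from Proposition~\ref{prop: nuclear norm wK}, $w(B_*) \le 2(\sqrt{d_1}+\sqrt{d_2})$, to get $w(D) \le 2\sqrt{r}(\sqrt{d_1}+\sqrt{d_2})$. Finally, an application of Cauchy--Schwarz on the two-term sum gives $\sqrt{d_1}+\sqrt{d_2} \le \sqrt{2(d_1+d_2)}$, which turns the previous estimate into $w(D) \le 2\sqrt{2r(d_1+d_2)}$, as desired.

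There is no real obstacle here; the proof is essentially one line once the previous proposition is in hand. The only thing to be careful about is matching the stated constant $2\sqrt{2}$: this is precisely where the Cauchy--Schwarz step $\sqrt{d_1}+\sqrt{d_2}\le\sqrt{2(d_1+d_2)}$ is needed, rather than the looser bound $\sqrt{d_1}+\sqrt{d_2}\le 2\sqrt{d_1+d_2}$.
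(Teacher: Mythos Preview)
Your proof is correct and follows exactly the paper's approach: the paper also deduces the bound by noting that $D \subset \sqrt{r}\, B_*$ via \eqref{eq: nuclear frobenius operator} and then invoking Proposition~\ref{prop: nuclear norm wK}. You are simply more explicit than the paper about the final Cauchy--Schwarz step $\sqrt{d_1}+\sqrt{d_2}\le\sqrt{2(d_1+d_2)}$, which the paper leaves implicit.
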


\begin{proof}[Proof of Proposition~\ref{prop: low rank mean width}]
The bound follows immediately from Proposition~\ref{prop: nuclear norm wK}
and the first inequality in \eqref{eq: nuclear frobenius operator}, which implies 
that $D \subset \sqrt{r} \cdot B_*$.
\end{proof}

\medskip

\begin{proof}[Proof of Theorem~\ref{thm: low-rank matrix recovery}]
The argument is a matrix version of the proof of Theorem~\ref{thm: sparse recovery}.
We consider the following subsets of $d_1 \times d_2$ matrices:
$$
\bar{K} := \{ X' :\; \|X'\|_* \le 1\}, \quad K := \|X\|_* \cdot \bar{K}.
$$
Then obviously $X \in K$, so it makes sense to apply 
Theorem~\ref{thm: estimation noisy optimization} (with $\e=0$) for $K$.
It should also be clear that 
the optimization program in Theorem~\ref{thm: estimation noisy optimization}
can be written in the form \eqref{eq: minimize nuclear norm}.

Applying Theorem~\ref{thm: estimation noisy optimization}, we obtain
$$
\E \sup_X \|\Xhat-X\|_F
\le \sqrt{2\pi} \cdot \frac{w(K)}{\sqrt{m}}.
$$
Recalling the definition of $K$ and using Proposition~\ref{prop: nuclear norm wK}
to bound its mean width, we have
$$
w(K) = w(\bar{K}) \cdot \|X\|_* \le 2 \sqrt{2} \,  \sqrt{d_1+d_2} \cdot \|X\|_*.
$$
It follows that 
$$
\E \sup_X \|\Xhat-X\|_F
\le 4 \sqrt{\pi} \, \sqrt{\frac{d_1+d_2}{m}} \cdot \|X\|_*.
$$
It remains to use the low-rank assumption \eqref{eq: low rank assumption}.
According to the first inequality in \eqref{eq: nuclear frobenius operator},
we have 
$$
\|X\|_* \le \sqrt{r} \|X\|_F.
$$
This completes the proof of Theorem~\ref{thm: low-rank matrix recovery}.
\end{proof}

\subsection{Low-rank matrix recovery: some extensions}

\subsubsection{From exact to effective low rank}
The exact low rank assumption \eqref{eq: low rank assumption} can be replaced 
by approximate low rank assumption. This is a matrix version of a similar
observation about sparsity which we made in Section~\ref{s: effective sparsity}.
Indeed, our argument shows that Theorem~\ref{thm: low-rank matrix recovery}
will hold if we replace the rank by the more flexible {\em effective rank}, defined 
for a matrix $X$ as
$$
r(X) = (\|X\|_*/\|X\|_F)^2.
$$
The effective rank is clearly bounded by the algebraic rank, and it is robust 
with respect to small perturbations.

\subsubsection{Noisy and sub-gaussian observations}

Our argument makes it easy to allow noise in the observations \eqref{eq: matrix observations}, 
i.e. consider observations of the form $y_i = \ip{A_i}{X} + \nu_i$. 
We leave details to the interested reader. 

\medskip

Further, just like in Section~\ref{s: sub-gaussian}, we can relax the requirement 
that $A_i$ be Gaussian random matrices, replacing it with a {\em sub-gaussian} 
assumption. Namely, it is enough to assume that the columns of $A_i$ 
are i.i.d., mean zero, isotropic and sub-gaussian random vectors in $\R^{d_1}$, 
with a common bound on the sub-gaussian norm. 
We again leave details to the interested reader.

\medskip

We can summarize the results about low-rank matrix recovery as follows.
\begin{quote}
  {\em Using convex programming, one can approximately recover 
  a $d_1 \times d_2$ matrix which has rank (or effective rank) $r$,
  from $m \sim r (d_1 + d_2)$ random linear observations.}
\end{quote}

To understand this number of observations better, 
note that it is of the same order as the number of degrees of 
freedom in the set of $d_1 \times d_2$ matrices or rank $r$.

\subsection{Matrix completion}				\label{s: matrix completion}

Let us now consider a different, and perhaps more natural, model of observations of matrices. 
Assume that we are given a {\em small random sample of entries} of an unknown matrix 
matrix $X$.
Our goal is to estimate $X$ from this sample. 
As before, we assume that $X$ has low rank.
This is called a {\em matrix completion problem}, and it was extensively 
studied recently \cite{CR, CT, KMO, Recht}.

The theory we discussed earlier in this chapter does not apply here. 
While sampling of entries is a linear operation, such observations are 
not Gaussian or sub-gaussian (more accurately, we should say 
that the sub-gaussian norm of such observations is too large). 
Nevertheless, it is possible able to derive a matrix completion result in this setting. 
Our exposition will be based on a direct and simple argument from \cite{PVY}.
The reader interested in deeper understanding of the matrix completion problem 
(and in particular exact completion) is referred to the papers cited above.

Let us formalize the process of sampling the entries of $X$.
First, we fix the average size $m$ of the sample. Then we 
generate selectors $\d_{ij} \in \{0,1\}$ for each entry of $X$. Those 
are i.i.d. random variables with 
$$
\E \d_{ij} = \frac{m}{d_1 d_2} =: p.
$$
Our observations are given as the $d_1 \times d_2$ matrix $Y$
whose entries are 
$$
Y_{ij} = \d_{ij} X_{ij}.
$$
Therefore, the observations are randomly and independently sampled entries
of $X$ along with the indices of these entries; the average sample size is fixed and equals $m$.
We will require that
\begin{equation}         \label{eq: m d1 d2}
m \ge d_1 \log d_1, \quad m \ge d_2 \log d_2.
\end{equation}
These restrictions ensure that, with high probability, the sample contains 
at least one entry from each row and each column of $X$ 
(recall the classical coupon collector's problem).

As before, we assume that
$$
\rank(X) \le r.
$$
The next result shows that $X$ can be estimated from $Y$ using low-rank approximation.  

\begin{theorem}[Matrix completion]				\label{thm: matrix completion}
  Choose $\Xhat$ to be best rank-$r$ approximation\footnote{Formally, 
  consider the singular value decomposition $p^{-1} Y = \sum_i s_i \vu_i \vv_i^\tran$
  with non-increasing singular values $s_i$. 
  We define $\Xhat$ by retaining the $r$ leading terms of this decomposition, i.e. 
  $\Xhat = \sum_{i=1}^r s_i \vu_i \vv_i^\tran$.} 
  of $p^{-1} Y$. Then 
  \begin{equation}         \label{eq: matrix completion}
  \E \frac{1}{\sqrt{d_1 d_2}} \, \|\Xhat - X\|_F \le C \sqrt{\frac{r(d_1+d_2)}{m}} \, \|X\|_\infty,
  \end{equation}
  where $\|X\|_\infty = \max_{i,j} |X_{ij}|$. 
\end{theorem}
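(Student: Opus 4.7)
The plan is to exploit the fact that $p^{-1} Y$ is an unbiased estimator of $X$: indeed, $\E[p^{-1}\delta_{ij} X_{ij}] = X_{ij}$, so if we set $\tilde X := p^{-1} Y$, then $\tilde X - X$ is a matrix with independent, mean-zero entries. The idea is to show $\tilde X$ is close to $X$ in operator norm, then use the low-rank structure of both $X$ and $\Xhat$ to upgrade this to a Frobenius norm bound.

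First I would reduce Frobenius norm to operator norm. Since $\Xhat$ is the best rank-$r$ approximation of $\tilde X$ with respect to the operator norm as well (Eckart--Young), and $X$ is itself of rank at most $r$, we get
$$\|\Xhat - \tilde X\| \le \|X - \tilde X\|,$$
so by the triangle inequality $\|\Xhat - X\| \le 2\|\tilde X - X\|$. Because $\Xhat$ and $X$ both have rank at most $r$, the difference has rank at most $2r$, and the second inequality in \eqref{eq: nuclear frobenius operator} yields
$$\|\Xhat - X\|_F \le \sqrt{2r}\,\|\Xhat - X\| \le 2\sqrt{2r}\,\|\tilde X - X\|.$$

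The heart of the proof is then to bound $\E\|\tilde X - X\|$. The matrix $\tilde X - X$ has independent mean-zero entries $(p^{-1}\delta_{ij} - 1) X_{ij}$, uniformly bounded in magnitude by $p^{-1}\|X\|_\infty$ and with variance at most $p^{-1}X_{ij}^2 \le p^{-1}\|X\|_\infty^2$. Applying a suitable spectral-norm bound for random matrices with independent bounded entries (for instance Bandeira--van Handel, or a noncommutative Bernstein/Khintchine inequality) gives
$$\E\|\tilde X - X\| \lesssim \|X\|_\infty\!\left(\sqrt{\tfrac{(d_1+d_2)\,d_1 d_2}{m}} + \tfrac{d_1 d_2}{m}\sqrt{\log(d_1+d_2)}\right),$$
after substituting $p = m/(d_1 d_2)$. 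The sampling hypothesis \eqref{eq: m d1 d2} is exactly what is needed to make the second term dominated by the first: it forces $d_1 d_2 \log(d_1+d_2)/m \lesssim d_1 + d_2$. Combining with the preceding step and dividing by $\sqrt{d_1 d_2}$ yields the claimed bound \eqref{eq: matrix completion}.

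The main obstacle is step two: a direct application of matrix Bernstein introduces an extra $\sqrt{\log(d_1+d_2)}$ in front of the variance term, which would weaken \eqref{eq: matrix completion} by a logarithmic factor. To obtain the clean statement one needs either the sharper Bandeira--van Handel spectral bound (which removes the logarithm on the variance term) or a symmetrization plus Gaussian comparison argument in the spirit of Section~\ref{s: M* proof}, combined with Theorem~\ref{thm: gordon} applied to the symmetrized Gaussian matrix. Everything else in the argument is essentially bookkeeping: Eckart--Young, the rank-$2r$ observation, and the simple triangle inequality.
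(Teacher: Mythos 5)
Your proposal is correct, and its skeleton --- the triangle inequality through $p^{-1}Y$, the observation that the best rank-$r$ approximation makes $\|\Xhat - p^{-1}Y\| \le \|X - p^{-1}Y\|$, and the passage from operator to Frobenius norm via $\rank(\Xhat - X) \le 2r$ and \eqref{eq: nuclear frobenius operator} --- is exactly the paper's. Where you diverge is the key random-matrix estimate for $\E\|p^{-1}Y - X\|$. The paper invokes Seginer's bound (Theorem~\ref{thm: seginer}), which reduces the operator norm to the expected maximum $\ell_2$ norm of the rows and columns of $Y - pX$; those maxima are then controlled by Bernstein's inequality plus a union bound over the $d_1$ rows (respectively $d_2$ columns), and the hypothesis \eqref{eq: m d1 d2} enters precisely to guarantee $p d_2 \ge \log d_1$ so that the union bound closes. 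You instead apply a Bandeira--van Handel/noncommutative-Khintchine type bound directly to the independent-entry matrix $p^{-1}Y - X$, obtaining a variance term $\|X\|_\infty\sqrt{(d_1+d_2)d_1 d_2/m}$ plus a sup-norm term $p^{-1}\|X\|_\infty\sqrt{\log(d_1+d_2)}$, and you use \eqref{eq: m d1 d2} to show the latter is dominated by the former; your arithmetic here checks out (the hypothesis gives $m \gtrsim \min(d_1,d_2)\log(d_1+d_2)$, which is what is needed). Both routes yield the same clean bound. Your observation that a naive matrix Bernstein application would cost an extra $\sqrt{\log(d_1+d_2)}$ on the variance term is accurate and is exactly why a sharper tool (Seginer in the paper, Bandeira--van Handel in your version) is required; the paper's choice has the advantage of being self-contained within the survey's toolkit, while yours makes the role of the sampling hypothesis \eqref{eq: m d1 d2} especially transparent as the condition under which the sup-norm fluctuation term is negligible.
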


To understand the form of this estimate, note that the left side of \eqref{eq: matrix completion}
measures the {\em average error per entry} of $X$:
$$
\frac{1}{\sqrt{d_1 d_2}} \|\Xhat - X\|_F 
= \Big( \frac{1}{d_1 d_2} \sum_{i=1}^{d_1} \sum_{j=1}^{d_2} |\Xhat_{ij} - X_{ij}|^2 \Big)^{1/2}.
$$
So, Theorem~\ref{thm: matrix completion} allows to make the average error per entry 
arbitrarily smaller than the maximal entry of the matrix. 
Such estimation succeeds with a sample of $m \sim r(d_1 + d_2)$ entries 
of $X$.

\medskip

The proof of Theorem~\ref{thm: matrix completion} will be based on a known
bound on the operator norm of random matrices, which is more general 
than Y.~Gordon's Theorem~\ref{thm: gordon}. 
There are several ways to obtain general bounds; see \cite{V tutorial} 
for a systematic treatment of this topic. We will use one such result 
due to Y.~Seginer \cite{Seginer}.

\begin{theorem}[Seginer's bound for general random matrices]		\label{thm: seginer}
  Let $G$ be an $d_1 \times d_2$ matrix whose entries are 
  i.i.d. mean zero random variables. Then
  $$
  \E\|G\| \le C \Big( \E \max_i \|G_i\|_2 + \E \max_j \|G^j\|_2 \Big)
  $$
  where the maxima are taken over all rows $G_i$ and over all columns $G^j$ of $G$,
  respectively.
\end{theorem}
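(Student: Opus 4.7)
The lower bound in Seginer's inequality is actually immediate: taking $\vv = \ve_j$ in the variational formula $\|G\| = \sup_{\vv \in S^{d_2-1}} \|G\vv\|_2$ gives $\|G\| \ge \|G^j\|_2$ for every $j$, and symmetrically $\|G\| \ge \|G_i\|_2$ for every $i$. So the content is the upper bound, and the plan is to reduce it in two stages: first to a Rademacher model via symmetrization, then to a \emph{deterministic} Rademacher inequality on fixed matrices.

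\textbf{Step 1 (Symmetrization).} Write $\|G\| = \sup_{\vu, \vv} \sum_{i,j} G_{ij} u_i v_j$ where the supremum runs over $(\vu,\vv) \in S^{d_1-1} \times S^{d_2-1}$. The $d_1 d_2$ summands are independent mean-zero random variables indexed by the single index $(i,j)$, so the symmetrization part of Proposition~\ref{prop: symmetrization contraction} applies and gives
\[
\E \|G\| \;\le\; 2\, \E \|\e \odot G\|,
\]
where $\e = (\e_{ij})$ is an array of i.i.d.\ Rademacher variables independent of $G$ and $\odot$ denotes the entrywise (Hadamard) product. Crucially, sign randomization leaves the row norms and column norms invariant: $\|(\e \odot G)_i\|_2 = \|G_i\|_2$ and $\|(\e \odot G)^j\|_2 = \|G^j\|_2$. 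Conditioning on $G$ therefore reduces the theorem to the following deterministic statement: for every real matrix $a = (a_{ij})$,
\[
\E_\e \|\e \odot a\| \;\le\; C\Bigl( \max_i \|a_i\|_2 \;+\; \max_j \|a^j\|_2 \Bigr).
\tag{$\star$}
\]
Once $(\star)$ is established, taking $\E$ over $a = G$ completes the proof.

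\textbf{Step 2 (Deterministic Rademacher inequality).} The naive approaches to $(\star)$ all lose a $\sqrt{\log d}$ factor: an $\e$-net argument on $S^{d_1-1}\times S^{d_2-1}$ combined with Hoeffding's inequality, or non-commutative Khintchine inequalities of Lust-Piquard type, both yield only
\[
\E_\e \|\e \odot a\| \;\lesssim\; \sqrt{\log(d_1+d_2)}\;\Bigl(\max_i \|a_i\|_2 + \max_j \|a^j\|_2\Bigr),
\]
which is strictly weaker than $(\star)$. To remove the logarithm, I would follow Seginer's combinatorial strategy: perform a dyadic decomposition of the entries of $a$ according to magnitude scales $|a_{ij}| \asymp 2^{-k}$, writing $a = \sum_k a^{(k)}$. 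Each $a^{(k)}$ is essentially a $\{0,\pm 2^{-k}\}$ matrix, so $\|\e \odot a^{(k)}\|$ is (up to scale) the spectral norm of a signed $0/1$ matrix supported on a level set $E_k$. These spectral norms can be controlled in terms of the \emph{maximum row cardinality} and \emph{maximum column cardinality} of $E_k$ via a rearrangement argument, and these cardinalities in turn feed into $\max_i \|a_i\|_2$ or $\max_j \|a^j\|_2$ depending on whether $E_k$ is row- or column-heavy. Summing a geometric series over scales $k$ yields $(\star)$.

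\textbf{Main obstacle.} The main obstacle is exactly Step 2, namely the scale-by-scale bound on $\E_\e \|\e \odot a^{(k)}\|$ without a logarithmic loss. This is the delicate combinatorial heart of Seginer's original argument and is where all the work happens; the symmetrization reduction of Step 1 is standard. An alternative route to $(\star)$ proceeds via a two-sided variant of the Latała--Seginer decoupling for quadratic forms, but either way the essential difficulty is the log-free control of the spectral norm of a sign-randomized level set, and I would import that combinatorial lemma verbatim from Seginer's paper rather than attempt to re-derive it here.
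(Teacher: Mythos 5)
First, note that the paper does not prove this theorem at all: it is imported verbatim from Seginer's paper \cite{Seginer}, so there is no in-paper argument to compare against. Your Step 1 (symmetrization) is fine up to the point where you condition on $G$: the inequality $\E\|G\| \le 2\,\E\|\e \odot G\|$ follows from Proposition~\ref{prop: symmetrization contraction} exactly as you say, and sign randomization preserves row and column norms. The genuine gap is that the deterministic statement $(\star)$ you reduce to is \emph{false}, and Seginer's own paper proves this. For a fixed matrix $a$, Seginer shows
$$
\E_\e \|\e \odot a\| \le C \big(\log \min(d_1,d_2)\big)^{1/4} \Big( \max_i \|a_i\|_2 + \max_j \|a^j\|_2 \Big),
$$
and that the factor $(\log \min(d_1,d_2))^{1/4}$ cannot be removed. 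A concrete counterexample to $(\star)$: let $a$ be the $n \times n$ block-diagonal matrix with $n/d$ disjoint $d \times d$ blocks, all entries on the blocks equal to $1/\sqrt{d}$, with $d \asymp \sqrt{\log n}$. Then $\max_i \|a_i\|_2 = \max_j \|a^j\|_2 = 1$, while $\|\e \odot a\| = d^{-1/2} \max_k \|\e^{(k)}\|$ over the $n/d$ independent sign blocks; the upper tail of each $\|\e^{(k)}\|$ forces $\E \max_k \|\e^{(k)}\| \gtrsim \sqrt{d} + \sqrt{\log(n/d)}$, so $\E_\e\|\e \odot a\| \gtrsim \sqrt{\log(n)/d} \asymp (\log n)^{1/4} \to \infty$. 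So no combinatorial lemma, from Seginer's paper or elsewhere, can deliver $(\star)$.

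The conceptual point you are missing is that the i.i.d.\ hypothesis on the entries of $G$ must be used \emph{after} symmetrization, not discarded by conditioning on $G$. In the counterexample above the bad behavior comes from an adversarial arrangement of the nonzero entries into small blocks; an i.i.d.\ matrix cannot exhibit such a level-set geometry except on an event whose contribution is already absorbed into $\E \max_i \|G_i\|_2$ (which, unlike the deterministic quantity $\max_i \|a_i\|_2$, automatically carries the $\sqrt{\log}$-type fluctuations of the largest rows). Seginer's actual argument does perform a truncation/decomposition by entry magnitude, but the resulting level sets are random, their row and column occupancies are controlled using the i.i.d.\ structure, and the right-hand side retains the expectations $\E\max_i\|G_i\|_2 + \E\max_j\|G^j\|_2$ throughout. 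So the correct skeleton is: symmetrize, then prove the bound for i.i.d.\ \emph{symmetric} entries directly, keeping the expectation over the entries; the "reduce to a fixed matrix" shortcut is precisely the step that breaks.
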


\medskip

\begin{proof}[Proof of Theorem~\ref{thm: matrix completion}.]
We shall first control the error in the operator norm.
By triangle inequality,
\begin{equation}         \label{eq: via Y}
\|\Xhat - X\| \le \|\Xhat - p^{-1}Y\| + \|p^{-1}Y - X\|.
\end{equation}
Since $\Xhat$ is the best rank-$r$ approximation to $p^{-1}Y$, and 
both $X$ and $\Xhat$ are rank-$r$ matrices, the 
first term in \eqref{eq: via Y} is bounded by the second term. Thus
\begin{equation}         \label{eq: Xhat-X}
\|\Xhat - X\| \le 2 \|p^{-1}Y - X\| = \frac{2}{p} \|Y - pX\|.
\end{equation}

The matrix $Y-pX$ has independent mean zero entries, namely
$$
(Y-pX)_{ij} = (\d_{ij}-p) X_{ij}.
$$
So we can apply Y.~Seginer's Theorem~\ref{thm: seginer}, which yields
\begin{equation}         \label{eq: norm Y-pX}
\E\|Y-pX\| \le C \Big( \E \max_{i \le d_1} \|(Y-pX)_i\|_2 + \E \max_{j \le d_2} \|(Y-pX)^j\|_2 \Big).
\end{equation}

It remains to bound the $\ell_2$ norms of rows and columns of $Y-pX$.
Let us do this for rows; a similar argument would control the columns.
Note that
\begin{equation}         \label{eq: rows norms}
\|(Y-pX)_i\|_2^2 = \sum_{j=1}^{d_2} (\d_{ij}-p)^2 |X_{ij}|^2
\le \sum_{j=1}^{d_2} (\d_{ij}-p)^2 \cdot \|X\|_\infty^2,
\end{equation}
where $\|X\|_\infty = \max_{i,j} |X_{ij}|$ is the $\ell_\infty$ norm of $X$
considered as a vector in $\R^{d_1 \times d_2}$.
To further bound the quantity in \eqref{eq: rows norms} we can use 
concentration inequalities for sums of independent random variables. 
In particular, we can use Bernstein's inequality (see \cite{BBL}), which yields 
$$
\Pr{\sum_{j=1}^{d_2} (\d_{ij}-p)^2 > p d_2 t} \le \exp(-c p d_2 t), \quad t \ge 2.
$$
The first restriction in \eqref{eq: m d1 d2} guarantees that $p d_2 \ge \log d_1$.
This enables us to use the union bound over $i \le d_1$, which yields
$$
\E \max_{i \le d_1} \Big[ \sum_{j=1}^{d_2} (\d_{ij}-p)^2 \Big]^{1/2} \le C_1 \sqrt{p d_2}.
$$
This translates into the following bound for the rows of $Y-pX$:
$$
\E \max_{i \le d_1} \|(Y-pX)_i\|_2 \le C_1 \sqrt{p d_2} \, \|X\|_\infty.
$$

Repeating this argument for columns and putting the two bounds 
into \eqref{eq: norm Y-pX}, we obtain
$$
\E\|Y-pX\| \le C_2 \sqrt{p(d_1+d_2)} \, \|X\|_\infty.
$$
Substituting into \eqref{eq: Xhat-X}, we conclude that
\begin{equation}         \label{eq: error in operator norm}
\E\|\Xhat-X\| \le C_3 \sqrt{\frac{d_1+d_2}{p}} \, \|X\|_\infty.
\end{equation}

It remains to pass to the Frobeinus norm. This is where we use 
the low rank assumption on $X$.
Since both $X$ and $\Xhat$ have ranks bounded by $r$, we have
$\rank(\Xhat-X) \le 2r$. Then,
according to the second inequality in \eqref{eq: nuclear frobenius operator}, 
$$
\|\Xhat-X\|_F \le \sqrt{2r} \, \|\Xhat-X\|.
$$
Combining this with \eqref{eq: error in operator norm}
and recalling that $p = m/(d_1d_2)$ by definition, we
arrive at the desired bound \eqref{eq: matrix completion}.
\end{proof}

\begin{remark}[Noisy observations]
  One can easily extend Theorem~\ref{thm: matrix completion} for noisy sampling, 
  where every observed entry of $X$ is independently corrupted by 
  a mean-zero noise. Formally, we assume that the entries of the observation matrix $Y$ 
  are
  $$
  Y_{ij} = \d_{ij} (X_{ij} + \nu_{ij})
  $$
  where $\nu_{ij}$ are independent and mean zero random variables. 
  Let us further assume that $|\nu_{ij}| \le M$ almost surely. 
  Then a slight modification of the proof of Theorem~\ref{thm: matrix completion} 
  yields the following error bound:
  $$
  \E \frac{1}{\sqrt{d_1 d_2}} \, \|\Xhat - X\|_F \le C \sqrt{\frac{r(d_1+d_2)}{m}} 
    \, \big( \|X\|_\infty + M \big).
  $$
  We leave details to the interested reader.  
\end{remark}

\section{Single-bit observations via hyperplane tessellations}	\label{s: single bit tess}

It may perhaps be surprising that a theory of similar strength 
can be developed for estimation problems with {\em non-linear} observations, 
in which the observation vector $\vy \in \R^m$ depends non-linearly 
on the unknown vector $\vx \in \R^n$.

In this and next sections we explore an example of extreme non-linearity -- 
the one given by the sign function. In Section~\ref{s: general non-linear}, 
we will extend the theory to completely general non-linearities.

\subsection{Single-bit observations}				\label{s: single-bit observations}

As before, our goal is to estimate an unknown vector $\vx$ that lies in a known
feasible set $K \subset \R^n$, from a random observation vector 
$\vy = (y_1,\ldots,y_m) \in \R^m$.
This time, we will work with {\em single-bit observations} $y_i \in \{-1,1\}$.
So, we assume that 
\begin{equation}         \label{eq: single-bit}
y_i = \sign \ip{\va_i}{\vx}, \quad i=1, \ldots,m,
\end{equation}
where $\va_i$ are standard Gaussian random vectors, i.e. $\va_i \sim N(0,I_n)$.
We can represent the model in a matrix form:
$$
\vy = \sign(A \vx),
$$
where $A$ is an $m \times n$ Gaussian random matrix with rows $\va_i$, 
and where our convention is that the sign function is applied to each coordinate
of the vector $A\vx$.

The single-bit model represents an extreme {\em quantization} of the 
linear model we explored before, where $\vy=A\vx$. Only one bit is retained
from each linear observation $y_i$. Yet we hope to estimate $\vx$ as accurately
as if all bits were available.

The model of single-bit observations was first studied in this context
in \cite{BB}. Our discussion will follow \cite{PV DCG}.

\subsection{Hyperplane tessellations}			\label{s: tessellations}

Let us try to understand single-bit observations $y_i$ from a geometric perspective. 
Each $y_i \in \{-1,1\}$ represents the orientation of the vector $\vx$ with respect
to the hyperplane with normal $\va_i$. There are $m$ such hyperplanes. 
The observation vector $\vy = (y_1,\ldots,y_m)$ represents orientation of $\vx$
with respect to all these hyperplanes. 

Geometrically, the $m$ hyperplanes induce a {\em tessellation} of $\R^n$
by {\em cells}. A cell is a set of points that have the same
orientation with respect to all hyperplanes; see Figure~\ref{fig: tessellation-cell}.
Knowing $\vy$ is the same as knowing the cell where $\vx$ lies. 

\begin{figure}[htp]			
  \centering \includegraphics[height=3.2cm]{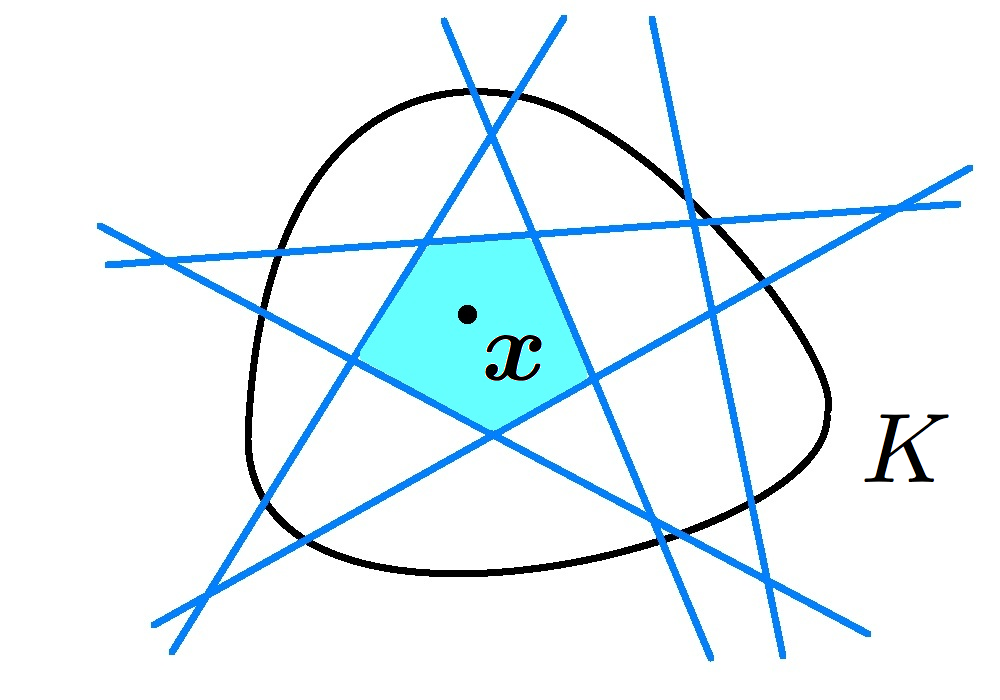} 
  \caption{A tessellation of the feasible set $K$ by hyperplanes. 
    The cell containing $\vx$ is highlighted.}
  \label{fig: tessellation-cell}	
\end{figure}

How can we estimate $\vx$? 
Recall that we know two pieces of information about $\vx$: 
\begin{enumerate}[\qquad 1.]
  \item $\vx$ lies in a known cell of the hyperplane tessellation;
  \item $\vx$ lies in a known set $K$. 
\end{enumerate}

Therefore, a good estimator of $\vx$ can be obtained by picking any 
vector $\xhat$ from the {\em intersection of these two sets}.
Moreover, since just these
two pieces of information about $\vx$ are available, 
such an estimator is best possible in some sense.

\subsection{$M^*$ bound for random tessellations}

How good is such an estimate? The maximal error is of course the diameter of the 
intersection of the cell with $K$. So in order to bound the error, we
need to prove that this diameter is small. 

Note that our strategy is parallel to what we have done for linear observations
in Section~\ref{s: based on M*}. The only piece we are missing is 
a version of $M^*$ bound for random tessellations instead of random subspaces.
Informally, we need a result about the following question: 

\begin{question}[Pizza cutting]
  How many random hyperplanes would cut a given set $K$ into pieces
  that are at most $\e$ in size?
\end{question} 

A result about this problem was proved in \cite{PV DCG}.

\begin{theorem}[$M^*$ bound for random tessellations]	\label{thm: M* tessellations}
  Consider a set $K \subseteq S^{n-1}$ and $m$ independent random hyperplanes
  drawn uniformly from the Grassmanian $G_{n,n-1}$. 
  Then 
  \begin{equation}         \label{eq: M* tessellations}
  \E \max_{\CC} \diam(K \cap \CC) \le \Big[ \frac{C w(K)}{\sqrt{m}} \Big]^{1/3},
  \end{equation}
  where the maximum is taken over all cells $\CC$ of the hyperplane tessellation.\footnote{A 
    high-probability version of Theorem~\ref{thm: M* tessellations} was proved in \cite{PV DCG}. 
    Namely, denoting 
    by $\d$ the right hand side of \eqref{eq: M* tessellations}, we have 
    $\max_{\CC} \diam(K \cap \CC) \le \delta$ with probability at least $1-2 \exp(-c\d^2 m)$,
    as long as $m \ge C \d^{-6} w(K)^2$. The reader will easily deduce the statement 
    of Theorem~\ref{thm: M* tessellations} from this.}
\end{theorem}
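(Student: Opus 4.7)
The plan is to reduce the diameter bound to a statement about pairwise hyperplane separations, and then control the resulting indicator-valued empirical process with the same symmetrization/contraction/rotation-invariance machinery used to prove Theorem~\ref{thm: M* general}. A cell $\CC$ is the set of points sharing a common sign pattern with respect to the normals $\va_1,\ldots,\va_m$, so two points $\vu,\vv$ lie in the same cell if and only if $\sign\ip{\va_i}{\vu}=\sign\ip{\va_i}{\vv}$ for every $i$. Writing $\delta_i(\vu,\vv)=\mathbb{1}\{\sign\ip{\va_i}{\vu}\neq \sign\ip{\va_i}{\vv}\}$, the bound $\max_\CC\diam(K\cap\CC)\le r$ becomes equivalent to the statement that $\sum_{i=1}^m \delta_i(\vu,\vv)\ge 1$ for every pair $\vu,\vv\in K$ with $\|\vu-\vv\|_2>r$. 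A direct calculation under the Gaussian law of $\va_i$ yields the pointwise identity $\E\,\delta_i(\vu,\vv)=\theta(\vu,\vv)/\pi$, where $\theta(\vu,\vv)=\arccos\ip{\vu}{\vv}$; since $\vu,\vv\in S^{n-1}$ we have $\theta(\vu,\vv)\ge \|\vu-\vv\|_2$, so the expected fraction of separating hyperplanes is at least $\|\vu-\vv\|_2/\pi$.

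The main body of the proof is then a uniform deviation bound
\[
\sup_{\vu,\vv\in K}\Big| \frac{1}{m}\sum_{i=1}^m \delta_i(\vu,\vv) - \frac{\theta(\vu,\vv)}{\pi} \Big| \le \eta
\]
holding with high probability, since combining such a bound with the pointwise lower bound above forces every pair $\vu,\vv\in K$ with $\|\vu-\vv\|_2>\pi(\eta+1/m)$ to be separated by at least one hyperplane, yielding $\max_\CC \diam(K\cap \CC)\lesssim \eta$. To produce the estimate on the supremum I would apply the symmetrization inequality of Proposition~\ref{prop: symmetrization contraction} to pass to the Rademacher analogue $\frac{1}{m}\sum_i \e_i\,\delta_i(\vu,\vv)$, and then aim to rewrite this via rotation invariance in the form $\ip{\vg}{\vu-\vv}$ with $\vg\sim N(0,I_n)$, so that the supremum is controlled by $w(K)/\sqrt{m}$ exactly as in the proof of Theorem~\ref{thm: M* general}.

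The decisive obstruction is that $\sign$ is discontinuous, so the contraction principle cannot be applied to the indicator $\delta_i$ directly. I would circumvent this by smoothing: introduce an $L$-Lipschitz surrogate $\sigma_L$ of the sign function that is piecewise-linear and equals $\pm 1$ outside a strip of width $O(1/L)$, apply contraction to the smoothed process to obtain a Rademacher bound of order $L\,w(K)/\sqrt{m}$, and account for the approximation error by Gaussian anti-concentration of $\ip{\va_i}{\vu}$ near the origin, which contributes a term of order $1/L$. Optimizing over $L$ balances the two contributions and produces a deviation bound of the form $\eta\lesssim (w(K)/\sqrt{m})^{1/3}$, which is exactly the cube-root scaling in the conclusion. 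The passage from a high-probability bound on $\max_\CC \diam(K\cap \CC)$ to the stated bound in expectation is then a routine integration of the tail. This contraction-relaxation step is the main technical hurdle; everything else is a faithful adaptation of the Gaussian $M^*$ argument from Section~\ref{s: M* proof}.
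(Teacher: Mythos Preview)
The paper does not give its own proof of this theorem; it quotes the high-probability statement from \cite{PV DCG} and leaves the passage to expectation to the reader. Your outline is essentially the strategy of that paper: reduce cell diameter to pairwise separation, use $\E\delta_i(\vu,\vv)=\theta(\vu,\vv)/\pi$, and control the empirical process by symmetrization and contraction after smoothing the sign.

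Two points need fixing. First, the anti-concentration step is only pointwise. Gaussian anti-concentration gives $\Pr{|\ip{\va_i}{\vu}|\le 1/L}\lesssim 1/L$ for each fixed $\vu$, but the approximation error between $\delta_i$ and its smoothed version is bounded by indicators of the event $\{|\ip{\va_i}{\vu}|\le 1/L\}$, and you need $\sup_{\vu\in K}\frac{1}{m}\sum_i \one\{|\ip{\va_i}{\vu}|\le 1/L\}$ controlled uniformly over $K$. This is itself an empirical process with a discontinuous summand and requires its own smoothing-plus-contraction argument; it is not simply $O(1/L)$.

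Second, the optimization is miscounted: balancing $L\,w(K)/\sqrt{m}$ against $1/L$ gives $L\asymp (w(K)/\sqrt{m})^{-1/2}$ and hence a bound of order $(w(K)/\sqrt{m})^{1/2}$, not the cube root. If you repair the first gap by majorizing the near-zero indicator with an $L$-Lipschitz bump and applying contraction again, the additional contribution is once more of order $1/L + L\,w(K)/\sqrt{m}$, so the final bound is still $(w(K)/\sqrt{m})^{1/2}$ --- stronger than the stated exponent $1/3$. The cube root in \cite{PV DCG} reflects the particular organization of that proof; your route, completed correctly, would in fact yield a sharper exponent. So the approach is sound in outline, but the uniform approximation step must be carried out explicitly, and the exponent produced by your balance is $1/2$, not $1/3$.
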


Apart from the exponent $1/2$ which is unlikely to be optimal, 
this result is indeed a version of the $M^*$ bound, Theorem~\ref{thm: M*}.
To further highlight the similarity, note that when $m < n$, the intersection 
of the $m$ random hyperplanes is a random linear subspace $E$ of codimension $m$. 
This subspace lies in each cell of the tessellation. So in particular, 
Theorem~\ref{thm: M* tessellations} controls the quantity $\E \diam(K \cap E)$
appearing in the standard $M^*$ bound, Theorem~\ref{thm: M*}.

\subsection{Estimation based on $M^*$ bound for random tessellations}

Now we can apply Theorem~\ref{thm: M* tessellations} for the estimation 
problem. Based on our discussion in Section~\ref{s: tessellations}, 
this result immediately implies the following.

\begin{theorem}[Estimation from single-bit observations: feasibility program]	
		\label{thm: estimation single-bit feasibility}
  Assume the unknown vector $\vx$ lies in some known set $K \subseteq S^{n-1}$, 
  and the single-bit observation vector $\vy$ is given by \eqref{eq: single-bit}.
  Choose $\xhat$ to be any vector satisfying
  \begin{equation}         \label{eq: single-bit feasibility}
  \xhat \in K \quad \text{and} \quad \sign(A\xhat) = \vy.
  \end{equation}
  Then 
  $$
  \E \sup_{\vx \in K} \|\xhat-\vx\|_2 \le  \Big[ \frac{C w(K)}{\sqrt{m}} \Big]^{1/3}.  \quad \qed
  $$
\end{theorem}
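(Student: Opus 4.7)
The plan is to apply the $M^*$ bound for random tessellations (Theorem~\ref{thm: M* tessellations}) directly, after translating the single-bit constraint $\sign(A\xhat) = \vy$ into the statement that $\xhat$ and $\vx$ lie in a common cell of the random hyperplane tessellation generated by the rows of $A$.

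First, I would set up the tessellation. Let $H_i = \{\vz \in \R^n : \ip{\va_i}{\vz} = 0\}$ for $i=1,\ldots,m$. By rotation invariance of $N(0,I_n)$, the direction $\va_i / \|\va_i\|_2$ is uniform on $S^{n-1}$ and independent of its magnitude, so each $H_i$ is uniformly distributed in $G_{n,n-1}$, and the $H_i$ are jointly independent. This matches exactly the setup of Theorem~\ref{thm: M* tessellations}, and the assumption $K \subseteq S^{n-1}$ in the statement we are proving means no additional normalization is needed.

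Next I would observe that the constraints $\sign(A\xhat) = \vy = \sign(A\vx)$ are equivalent to $\sign(\ip{\va_i}{\xhat}) = \sign(\ip{\va_i}{\vx})$ for every $i$, i.e.\ $\xhat$ and $\vx$ have identical orientation with respect to each hyperplane $H_i$, and thus lie in the same cell $\CC$ of the tessellation. Combining this with $\xhat, \vx \in K$ gives
$$
\|\xhat - \vx\|_2 \le \diam(K \cap \CC) \le \max_{\CC'} \diam(K \cap \CC'),
$$
where the right-hand side is a deterministic function of $A$ alone, independent of the particular $\vx \in K$ or of the particular choice of $\xhat$ satisfying \eqref{eq: single-bit feasibility}. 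Hence one may take $\sup_{\vx \in K}$ and then expectation, and invoke Theorem~\ref{thm: M* tessellations} to recover the claimed bound.

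There is no real obstacle here: all the analytic work is already packaged inside Theorem~\ref{thm: M* tessellations}, and the argument above is essentially a geometric dictionary translating ``single-bit feasibility'' into ``common cell.'' The only points requiring minor care are verifying the correct distribution of the random hyperplanes (via rotation invariance) and noting that the bound is uniform in $\vx$ and $\xhat$ so that the sup and the expectation can be exchanged with the cell-diameter bound without loss.
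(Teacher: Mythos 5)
Your proposal is correct and follows exactly the route the paper intends: the paper states this theorem as an immediate consequence of Theorem~\ref{thm: M* tessellations}, via the observation (made in its Section on hyperplane tessellations) that the constraint $\sign(A\xhat)=\vy$ places $\xhat$ and $\vx$ in the same cell of the random hyperplane tessellation, so the error is bounded by the maximal cell diameter. Your write-up merely makes explicit the rotation-invariance and uniformity-in-$\vx$ points that the paper leaves implicit.
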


We assumed in this result that feasible set $K$ lies on the unit sphere.
This is because the magnitude $\|\vx\|_2$ is obviously lost in the single-bit observations. 
So we can only hope to estimate the direction of $\vx$, which is the vector 
$\vx/\|\vx\|_2$ on the unit sphere.

A good news is that estimation can be made from $m \sim w(K)^2$ single-bit 
observations, the same as for linear observations. So, perhaps surprisingly, 
the essential information about $\vx$ is contained in a single bit of each observation.

A bad news is that the feasibility program \eqref{eq: single-bit feasibility} 
is {\em not convex}. When $K$ is restricted to lie on the sphere, it can 
never be convex or be convexified.
One can get around this issue, for example, by lifting the restriction;
see \cite{PV DCG} for pizza-cutting of general sets in $\R^n$. 

But a better idea will be to replace the feasibility problem \eqref{eq: single-bit feasibility}
by an optimization problem -- just like we did in Section~\ref{s: optimization} --
which will work for general sets $K$ in the unit {\em ball} $B_2^n$ rather than 
the unit sphere. Such sets can be convexified. We will do this in the next section.

\section{Single-bit observations via optimization, and applications to logistic regression}	\label{s: single bit opt}

Our goal remains the same as we described in Section~\ref{s: single-bit observations}. 
We would like to estimate a vector $\vx$ that lies in a known feasible set $K \subset \R^n$,
from single-bit observations given as 
$$
\vy = \sign(A\vx) \in \{-1,1\}^m.
$$

Instead of formulating estimation as a feasibility problem \eqref{eq: single-bit feasibility}, 
we will now state it as an {\em optimization} problem, as follows: 
\begin{equation}         \label{eq: maximize correlation}
\text{maximize } \ip{A\vx'}{\vy} \text{ subject to } 
  \vx' \in K.
\end{equation} 
This program tries to fit linear observations $A\vx'$ to the single-bit observations
$\vy$. It does so by maximizing the correlation between linear and single-bit observations 
while searching inside the feasible set $K$.

If $K$ is a convex set, \eqref{eq: maximize correlation} is a convex program. 
Otherwise one can convexify $K$ as we did several times before.

The following result from \cite{PV IEEE} provides a guarantee for such estimator.

\begin{theorem}[Estimation from single-bit observations: optimization program]	
		\label{thm: estimation single-bit optimization}
  Assume the unknown vector $\vx \in \R^n$ satisfies $\|\vx\|_2 = 1$ 
  and $\vx$ lies in some known set $K \subseteq B_2^n$.
  Choose $\xhat$ to be a solution to the program \eqref{eq: maximize correlation}.
  Then 
  $$
  \E \|\xhat-\vx\|_2^2 \le \frac{C w(K)}{\sqrt{m}}.
  $$
  Here $C = \sqrt{8\pi} \approx 5.01$.
\end{theorem}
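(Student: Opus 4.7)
The plan is to treat the optimization objective
\[
f(\vu) := \ip{A\vu}{\vy} = \sum_{i=1}^m \sign\ip{\va_i}{\vx}\,\ip{\va_i}{\vu}
\]
as a noisy version of its mean. A direct Gaussian calculation, decomposing $\vu = \ip{\vu}{\vx}\vx + \vu^\perp$ and using that $\ip{\va_i}{\vu^\perp}$ is independent of $\ip{\va_i}{\vx}$ together with $\E|\ip{\va_i}{\vx}| = \sqrt{2/\pi}$ (which crucially uses $\|\vx\|_2 = 1$), gives
\[
\E f(\vu) = m\sqrt{2/\pi}\,\ip{\vx}{\vu}.
\]
Over $K \subseteq B_2^n$ this deterministic functional is uniquely maximized at $\vu = \vx$. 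Writing the fluctuation $r(\vu) := f(\vu) - \E f(\vu) = \ip{\bm{w}}{\vu}$ with $\bm{w} := \sum_{i=1}^m \big[\sign\ip{\va_i}{\vx}\,\va_i - \sqrt{2/\pi}\,\vx\big]$ expresses the noise as a linear functional of $\vu$.

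Next, I would turn optimality into an error bound. Since $\xhat$ maximizes $f$ over $K$, $f(\xhat) \ge f(\vx)$, and rearranging yields
\[
m\sqrt{2/\pi}\,(1-\ip{\vx}{\xhat}) \;\le\; r(\xhat) - r(\vx) \;=\; \ip{\bm{w}}{\xhat - \vx} \;\le\; \sup_{\vu \in K-K}\ip{\bm{w}}{\vu}.
\]
Because $\|\xhat\|_2 \le 1$ and $\|\vx\|_2 = 1$, the elementary inequality $\|\xhat - \vx\|_2^2 \le 2(1 - \ip{\vx}{\xhat})$ holds, and combining the two gives the pointwise estimate
\[
\|\xhat - \vx\|_2^2 \;\le\; \frac{\sqrt{2\pi}}{m}\sup_{\vu \in K-K}\ip{\bm{w}}{\vu}.
\]
Taking expectations reduces the entire theorem to showing $\E \sup_{\vu \in K-K}\ip{\bm{w}}{\vu} \le 2\sqrt{m}\,w(K)$.

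For this, I would follow the symmetrization and rotation-invariance template used in the proof of the general $M^*$ bound in Section~\ref{s: M* proof}. Since the summands $\bm{w}_i := \sign\ip{\va_i}{\vx}\,\va_i - \sqrt{2/\pi}\,\vx$ are i.i.d.\ and centered, Proposition~\ref{prop: symmetrization contraction}(i) gives
\[
\E \sup_{\vu \in K-K}\ip{\bm{w}}{\vu} \;\le\; 2\,\E \sup_{\vu \in K-K}\Big|\sum_{i=1}^m \e_i \sign\ip{\va_i}{\vx}\,\ip{\va_i}{\vu}\Big|.
\]
The key observation, and the step I expect to be the most delicate, is the following distributional identity: conditional on $\va_i$, the product $\eta_i := \e_i \sign\ip{\va_i}{\vx}$ is a fair $\pm 1$ variable, so unconditionally $(\eta_i)$ is an i.i.d.\ Rademacher sequence \emph{independent of} $(\va_i)$. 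Hence $\sum_i \eta_i\va_i \stackrel{d}{=} \sum_i \e_i\va_i \sim N(0, mI_n)$, which has the law of $\sqrt{m}\,\vg$ with $\vg \sim N(0, I_n)$. The right-hand side of the last display then equals $2\sqrt{m}\,\E \sup_{\vu \in K-K}|\ip{\vg}{\vu}| = 2\sqrt{m}\,w(K)$, using the symmetry of $K-K$. Chaining the bounds yields $\E \|\xhat - \vx\|_2^2 \le \sqrt{8\pi}\,w(K)/\sqrt{m}$, matching the stated constant.
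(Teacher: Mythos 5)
Your proposal is correct and follows essentially the same route as the paper: the same Gaussian computation giving $\E\ip{A\vu}{\vy} = m\sqrt{2/\pi}\,\ip{\vx}{\vu}$, the same use of $\|\vx\|_2=1$, $\|\xhat\|_2\le 1$ to convert the correlation gap into $\|\xhat-\vx\|_2^2$, and the same symmetrization followed by absorbing the signs $\e_i y_i$ into the Gaussians and invoking rotation invariance to reduce the deviation term to $2\sqrt{m}\,w(K)$. The only differences are cosmetic (you maximize $f=\ip{A\cdot}{\vy}$ where the paper minimizes the loss $L_{\vx}=-f/m$, and you exploit linearity of the fluctuation directly), and your constant $\sqrt{8\pi}$ matches the paper's.
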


Our proof of Theorem~\ref{thm: estimation single-bit optimization}
will be based on properties of the {\em loss function}, which we define as 
$$
L_{\vx}(\vx') = - \frac{1}{m} \ip{A\vx'}{\vy} 
= - \frac{1}{m} \sum_{i=1}^m y_i \ip{\va_i}{\vx'}.
$$
The index $\vx$ indicates that the loss function depends on $\vx$ through $\vy$.
The negative sign is chosen so that program \eqref{eq: maximize correlation} 
minimizes the loss function over $K$.

We will now compute the expected value and the deviation of the loss function 
for fixed $\vx$ and $\vx'$.

\begin{lemma}[Expectation of loss function]				\label{lem: expectation loss}
  Let $\vx \in S^{n-1}$ and $\vx' \in \R^n$. Then 
  $$
  \E L_{\vx}(\vx') = - \sqrt{\frac{2}{\pi}} \, \ip{\vx}{\vx'}.
  $$
\end{lemma}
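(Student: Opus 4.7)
The plan is to compute the expectation term-by-term, using the fact that each observation contributes the same. Since $\vy = \sign(A\vx)$ has coordinates $y_i = \sign(\ip{\va_i}{\vx})$, linearity of expectation reduces the problem to evaluating
\[
\E L_{\vx}(\vx') = -\E\!\left[\sign(\ip{\va}{\vx}) \, \ip{\va}{\vx'}\right]
\]
for a single standard Gaussian vector $\va \sim N(0,I_n)$.

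Next, I would exploit orthogonal decomposition. Write $\vx' = \a \vx + \vz$, where $\a = \ip{\vx}{\vx'}$ and $\vz \perp \vx$. Then
\[
\ip{\va}{\vx'} = \a \ip{\va}{\vx} + \ip{\va}{\vz}.
\]
Because $\va$ is standard Gaussian and $\vx \perp \vz$, the random variables $g := \ip{\va}{\vx}$ and $\ip{\va}{\vz}$ are jointly Gaussian and uncorrelated, hence independent. The second term therefore contributes
\[
\E[\sign(g)] \cdot \E[\ip{\va}{\vz}] = 0
\]
by symmetry of the standard normal distribution. Only the first term survives.

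The remaining computation is then $\E[\sign(g) \cdot g] = \E|g|$, where $g \sim N(0,1)$ since $\|\vx\|_2 = 1$. This is the standard half-normal mean $\sqrt{2/\pi}$. Combining, we get
\[
\E[\sign(\ip{\va}{\vx})\ip{\va}{\vx'}] = \a \sqrt{\tfrac{2}{\pi}} = \sqrt{\tfrac{2}{\pi}}\,\ip{\vx}{\vx'},
\]
which yields the claimed formula after negation. No real obstacle arises here: the only subtlety is remembering to normalize so that $\ip{\va}{\vx} \sim N(0,1)$ exactly (which uses $\|\vx\|_2 = 1$) and recognizing the independence from orthogonality, which makes the cross term vanish cleanly.
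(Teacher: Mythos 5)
Your proof is correct and follows essentially the same route as the paper's: reduce to a single observation by linearity, exploit the joint Gaussian structure of $\ip{\va}{\vx}$ and $\ip{\va}{\vx'}$, and conclude via $\E[\sign(g)g]=\E|g|=\sqrt{2/\pi}$. Your orthogonal decomposition $\vx'=\a\vx+\vz$ simply makes explicit the ``simple calculation'' that the paper leaves to the reader, so there is nothing to add.
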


\begin{proof}
We have
$$
\E L_{\vx}(\vx') = - \E y_1 \ip{\va_1}{\vx'} = - \E \sign(\ip{\va_1}{\vx}) \ip{\va_1}{\vx'}.
$$
It remains to note that $\ip{\va_1}{\vx}$ and $\ip{\va_1}{\vx'}$
are normal random variables with zero mean, 
variances $\|\vx\|_2^2 = 1$ and $\|\vx'\|_2^2$ respectively, 
and covariance $\ip{\vx}{\vx'}$.
A simple calculation renders the expectation above as 
$- \ip{\vx}{\vx'} \cdot \E \sign(g) g$ where $g \sim N(0,1)$.
It remains to recall that $\E \sign(g) g = \E |g| = \sqrt{2/\pi}$.
\end{proof}

\begin{lemma}[Uniform deviation of loss function]				\label{lem: deviation loss}
  We have 
  \begin{equation}         \label{eq: deviation loss}
  \E \sup_{\vu \in K-K} |L_{\vx}(\vu) - \E L_{\vx}(\vu)| 
  \le \frac{2 w(K)}{\sqrt{m}}.
  \end{equation}
\end{lemma}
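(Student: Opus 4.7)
The plan is to mimic the proof of the $M^*$ bound (Theorem~\ref{thm: M* general}) by combining symmetrization with a distributional identity that converts the Rademacher-weighted sum back into a Gaussian one. Writing $Z_i(\vu) = y_i \ip{\va_i}{\vu}$, the random variables $Z_1(\vu), \ldots, Z_m(\vu)$ are i.i.d.\ for each fixed $\vu$, and
$$
L_{\vx}(\vu) - \E L_{\vx}(\vu) = -\frac{1}{m}\sum_{i=1}^m \bigl[ Z_i(\vu) - \E Z_i(\vu) \bigr].
$$
So as a first step I would apply the symmetrization inequality from Proposition~\ref{prop: symmetrization contraction}(i), which yields
$$
\E \sup_{\vu \in K-K} \bigl| L_{\vx}(\vu) - \E L_{\vx}(\vu) \bigr|
\;\le\; \frac{2}{m}\, \E \sup_{\vu \in K-K} \Bigl| \sum_{i=1}^m \e_i\, y_i \ip{\va_i}{\vu} \Bigr|,
$$
where the $\e_i$ are independent Rademacher variables, independent of the $\va_i$.

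The second (and decisive) step is to recognize that, even though $y_i = \sign\ip{\va_i}{\vx}$ depends on $\va_i$, the product $\e_i y_i$ is a Rademacher variable independent of $\va_i$: conditionally on $\va_i$, the sign $y_i$ is deterministic and $\pm 1$, so $\e_i y_i \mid \va_i$ is Rademacher. Since $\va_i \sim N(0,I_n)$ is symmetric, it follows that $\e_i y_i \va_i \sim N(0,I_n)$ and these vectors are i.i.d. Hence
$$
\vg \;:=\; \frac{1}{\sqrt{m}} \sum_{i=1}^m \e_i\, y_i\, \va_i \;\sim\; N(0,I_n),
$$
and the inner expectation becomes $\sqrt{m}\, \E \sup_{\vu \in K-K} |\ip{\vg}{\vu}|$.

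For the third step, I use the fact that $K-K$ is origin-symmetric, so the supremum of $\ip{\vg}{\vu}$ over $K-K$ equals the supremum of its absolute value, and by Definition~\ref{def: mean width} this expectation is exactly $w(K)$. Combining these three observations gives
$$
\E \sup_{\vu \in K-K} \bigl| L_{\vx}(\vu) - \E L_{\vx}(\vu) \bigr| \;\le\; \frac{2}{m}\cdot \sqrt{m}\cdot w(K) \;=\; \frac{2 w(K)}{\sqrt{m}},
$$
which is the claimed bound. There is no real obstacle here; the only point requiring care is the distributional identity $\e_i y_i \va_i \sim N(0,I_n)$, which is what makes the single-bit argument reduce cleanly to the Gaussian mean width of $K$ rather than some more complicated quantity. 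Notice in particular that the contraction step from Proposition~\ref{prop: symmetrization contraction} is not needed, because the absolute value $|\,\cdot\,|$ is already absent from $y_i \ip{\va_i}{\vu}$.
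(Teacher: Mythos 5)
Your proposal is correct and follows essentially the same route as the paper: symmetrization, then the observation that $\e_i y_i \va_i$ has the same distribution as $\va_i$ (so the weighted sum is $\sqrt{m}\,\vg$ with $\vg \sim N(0,I_n)$), then the symmetry of $K-K$ to identify the mean width. Your remark that the contraction step is not needed here also matches the paper, which likewise uses only symmetrization in this lemma.
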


\begin{proof}
Due to the form of loss function, we can apply the symmetrization inequality 
of Proposition~\ref{prop: symmetrization contraction}, which bounds 
the left side of \eqref{eq: deviation loss} by 
\begin{equation}         \label{eq: big ip}
\frac{2}{m} \E \sup_{\vu \in K-K} \Big| \sum_{i=1}^m \e_i y_i \ip{\va_i}{\vu} \Big|
= \frac{2}{m} \E \sup_{\vu \in K-K} \Big| \ip{\sum_{i=1}^m \e_i y_i \va_i}{\vu} \Big|.
\end{equation}

By symmetry and since $y_i \in \{-1,1\}$, 
the random vectors $\{\e_i y_i \va_i\}$ are distributed identically with $\{\va_i\}$.
In other words, we can remove $\e_i y_i$ from \eqref{eq: big ip} without changing
the value of the expectation. 

Next, by rotation invariance, $\sum_{i=1}^m \va_i$ is distributed identically with 
$\sqrt{m} \, \vg$, where $\vg \sim N(0,I_n)$. Therefore, the quantity 
in \eqref{eq: big ip} equals
$$
\frac{2}{\sqrt{m}} \E \sup_{\vu \in K-K} |\ip{\vg}{\vu}| = \frac{2 w(K)}{\sqrt{m}}.
$$
This completes the proof.
\end{proof}

\medskip

\begin{proof}[Proof of Theorem~\ref{thm: estimation single-bit optimization}.]
Fix $\vx' \in K$. Let us try to bound $\|\vx-\vx'\|_2$ in terms of $L_{\vx}(\vx) - L_{\vx}(\vx')$.
By linearity of the loss function, we have
\begin{equation}         \label{eq: expectation+deviation}
L_{\vx}(\vx) - L_{\vx}(\vx') = L_{\vx}(\vx-\vx') = \E L_{\vx}(\vx-\vx') + D_{\vx}
\end{equation}
where the deviation
$$
D_{\vx} := \sup_{\vu \in K-K} |L_{\vx}(\vu) - \E L_{\vx}(\vu)|
$$
will be controlled using Lemma~\ref{lem: deviation loss} a bit later.

To compute the expected value in \eqref{eq: expectation+deviation}, 
we can use Lemma~\ref{lem: expectation loss}
along with the conditions $\|\vx\|_2 = 1$, $\|\vx'\|_2 \le 1$ 
(the latter holds since $\vx' \in K \subseteq B_2^n$).
This way we obtain
$$
\E L_{\vx}(\vx-\vx') = - \sqrt{\frac{2}{\pi}} \, \ip{\vx}{\vx-\vx'}
\le - \frac{1}{2} \sqrt{\frac{2}{\pi}} \, \|\vx-\vx'\|_2^2.
$$
Putting this into \eqref{eq: expectation+deviation}, we conclude that
\begin{equation}         \label{eq: Lx bounded}
L_{\vx}(\vx) - L_{\vx}(\vx') 
\le - \frac{1}{\sqrt{2\pi}} \, \|\vx-\vx'\|_2^2 + D_{\vx}.
\end{equation}

This bound holds for any fixed $\vx' \in K$ and for any point
in the probability space (i.e. for any realization of the random variables
appearing in this bound).
Therefore \eqref{eq: Lx bounded} must hold for the random vector 
$\vx' = \xhat$, again for any point in the probability space.

The solution $\xhat$ was chosen to minimize the loss function, thus
$L_{\vx}(\xhat) \le L_{\vx}(\vx)$. This means that for $\vx' = \xhat$, 
the left hand side of \eqref{eq: Lx bounded} is non-negative. 
Rearranging the terms, we obtain 
$$
\|\vx-\xhat\|_2^2 \le \sqrt{2\pi} \, D_{\vx}.
$$
It remains to take expectation of both sides and use Lemma~\ref{lem: deviation loss}. 
This yields
$$
\E \|\vx-\xhat\|_2^2 \le \sqrt{2\pi} \, \frac{2 w(K)}{\sqrt{m}}.
$$
This completes the proof of Theorem~\ref{thm: estimation single-bit optimization}.
\end{proof}

\subsection{Single-bit observations with general non-linearities}

The specific non-linearity of observations that we considered so far 
-- the one given by sign function -- did not play a big role in our argument 
in the last section. The same argument, and surprisingly, the 
same optimization program \eqref{eq: maximize correlation}, can serve
any non-linearity in the observations.

So let us consider a general model of single-bit observations 
$\vy = (y_1,\ldots,y_m) \in \{-1,1\}^m$, which satisfy 
\begin{equation}         \label{eq: single-bit again}
\E y_i = \theta(\ip{\va_i}{\vx}), \quad i=1,\ldots,m
\end{equation}
Here $\theta: \R \to \R$ is some {\em link function}, which 
describes non-linearity of observations. 
We assume that $y_i$ are independent given $\va_i$, 
which are standard Gaussian random vectors as before.
The matrix form of this model can be written as
$$
\E \vy = \theta(A \vx),
$$
where $A$ is an $m \times n$ Gaussian random matrix with rows $\va_i$, 
and where our convention is that the $\theta$ is applied to each coordinate
of the vector $A\vx$.

To estimate $\vx$, an unknown vector in a known feasible set $K$, 
we will try to use the same optimization program \eqref{eq: maximize correlation} 
in the last section. This may be surprising since {\em the program does not 
even need to know the non-linearity $\theta$}, nor does it attempt to estimate $\theta$.
Yet, this idea works in general as nicely as for the specific sign function. 
The following result from \cite{PV IEEE} is a general version of 
Theorem~\ref{thm: estimation single-bit optimization}.

\begin{theorem}[Estimation from single-bit observations with general non-linearity]	
		\label{thm: single-bit general non-linearity}
  Assume the unknown vector $\vx \in \R^n$ satisfies $\|\vx\|_2 = 1$ 
  and $\vx$ lies in some known set $K \subseteq B_2^n$.
  Choose $\xhat$ to be a solution to the program \eqref{eq: maximize correlation}.
  Then 
  $$
  \E \|\xhat-\vx\|_2^2 \le \frac{4 w(K)}{\l \sqrt{m}}.
  $$
  Here we assume that 
  \begin{equation}         \label{eq: lambda}
  \l := \E \theta(g) g > 0 \quad \text{for } g \sim N(0,1).
  \end{equation}
\end{theorem}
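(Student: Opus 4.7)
The plan is to mimic, essentially verbatim, the three-step argument used for Theorem~\ref{thm: estimation single-bit optimization} (the sign-function case), using the same loss function
$$
L_{\vx}(\vx') = -\frac{1}{m}\ip{A\vx'}{\vy} = -\frac{1}{m}\sum_{i=1}^m y_i \ip{\va_i}{\vx'}.
$$
Observe that by linearity $L_{\vx}(\vx) - L_{\vx}(\vx') = L_{\vx}(\vx - \vx')$, and by definition $\xhat$ minimizes $L_\vx$ on $K$. Hence, as before, the task reduces to (i) computing $\E L_{\vx}(\vx - \vx')$, and (ii) bounding the uniform deviation $D_\vx := \sup_{\vu \in K-K} |L_\vx(\vu) - \E L_\vx(\vu)|$. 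Remarkably, the program itself never uses $\theta$; the link function enters only through the expectation.

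For step (i), the only change from Lemma~\ref{lem: expectation loss} is that in the single bit case we exploited $\E[y_1 \mid \va_1] = \sign(\ip{\va_1}{\vx})$, whereas now $\E[y_1 \mid \va_1] = \theta(\ip{\va_1}{\vx})$. So I would write
$$
\E L_{\vx}(\vx') = -\E\bigl[\theta(\ip{\va_1}{\vx})\, \ip{\va_1}{\vx'}\bigr],
$$
and decompose $\va_1 = \ip{\va_1}{\vx}\,\vx + \va_1^{\perp}$, where $\va_1^\perp$ is independent of $\ip{\va_1}{\vx}$ by rotation invariance and $\|\vx\|_2=1$. The $\va_1^\perp$ term integrates to zero, leaving $\E\theta(g)g \cdot \ip{\vx}{\vx'} = \l \ip{\vx}{\vx'}$ with $g \sim N(0,1)$. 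Combined with the elementary inequality $\|\vx - \vx'\|_2^2 \le 2(1 - \ip{\vx}{\vx'})$ valid for $\|\vx\|_2 = 1$, $\|\vx'\|_2 \le 1$, this yields
$$
\E L_\vx(\vx - \vx') = -\l(1 - \ip{\vx}{\vx'}) \le -\tfrac{\l}{2}\|\vx - \vx'\|_2^2.
$$

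For step (ii), Lemma~\ref{lem: deviation loss} carries over without change: after symmetrization the key trick is that $\e_i y_i$ is a Rademacher variable independent of $\va_i$ whenever $y_i \in \{-1,1\}$ (this uses nothing about the distribution of $y_i$ beyond being $\pm 1$-valued and independent of $\e_i$). Since $\va_i$ is symmetric, $\e_i y_i \va_i \stackrel{d}{=} \va_i$, and the rest of the computation proceeds identically, giving $\E D_\vx \le 2 w(K)/\sqrt{m}$.

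To finish, I would apply the decomposition $L_\vx(\vx - \xhat) = \E L_\vx(\vx - \xhat) + [L_\vx(\vx - \xhat) - \E L_\vx(\vx - \xhat)]$ at the random point $\vx' = \xhat$. Since $L_\vx(\xhat) \le L_\vx(\vx)$ by optimality, the left-hand side is nonnegative, whence
$$
0 \le -\tfrac{\l}{2}\|\vx - \xhat\|_2^2 + D_\vx,
$$
and taking expectations gives $\E\|\vx - \xhat\|_2^2 \le (2/\l)\E D_\vx \le 4 w(K)/(\l\sqrt{m})$. The only real conceptual point (and essentially the only place the hypothesis $\lambda > 0$ is used) is the identification $\E\theta(g)g = \l$ and the coercivity it then provides; the rest is bookkeeping parallel to the sign-function proof.
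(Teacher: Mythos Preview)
Your proof is correct and follows exactly the approach the paper takes: the paper's own proof simply says that the argument of Theorem~\ref{thm: estimation single-bit optimization} goes through verbatim, the only change being that Lemma~\ref{lem: expectation loss} now yields $\E L_{\vx}(\vx') = -\lambda \ip{\vx}{\vx'}$ in place of $-\sqrt{2/\pi}\,\ip{\vx}{\vx'}$. Your orthogonal decomposition of $\va_1$ is a clean way to carry out that computation, and your observation that Lemma~\ref{lem: deviation loss} transfers unchanged because $\e_i y_i$ is Rademacher independent of $\va_i$ (using only $y_i \in \{-1,1\}$) is precisely the point.
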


\begin{proof}
The argument follows very closely the proof of Theorem~\ref{thm: estimation single-bit optimization}.
The only different place is the computation 
of expected loss function in Lemma~\ref{lem: expectation loss}.
When the sign function is replaced by a general non-linearity $\theta$,
one easily checks that the expected value becomes
$$
\E L_{\vx}(\vx') = - \lambda \ip{\vx}{\vx'}.
$$
The rest of the argument is the same. 
\end{proof}

For $\theta(z) = \sign(z)$, Theorem~\ref{thm: single-bit general non-linearity}
is identical with Theorem~\ref{thm: estimation single-bit optimization}. However, 
the new result is much more general.
{\em Virtually no restrictions are imposed on the non-linearity $\theta$.} 
In particular, $\theta$ needs not be continuous or one-to-one. 

The parameter $\l$ simply measures the information content retained
through the non-linearity. It might be useful to express $\l$ as 
$$
\l = \E \theta(\ip{\va_i}{\vx}) \ip{\va_i}{\vx},
$$
so $\lambda$ measures how much 
the non-linear observations $\theta(\ip{\va_i}{\vx})$ 
are correlated with linear observations $\ip{\va_i}{\vx}$.

The assumption that $\l>0$ is made for convenience; 
if $\l<0$ we can switch the sign of $\theta$. However, if $\l=0$,
the non-linear and linear measurements are uncorrelated, and 
often no estimation is possible. 
An extreme example of the latter situation occurs when $\theta$ is a constant
function, which clearly carries no information about $\vx$.

\subsection{Logistic regression, and beyond}

  For the link function $\theta(z) = \tanh(z/2)$, the estimation problem \eqref{eq: single-bit again}
  is equivalent to {\em logistic regression with constraints}. In the usual statistical notation
  explained in Section~\ref{s: regression}, logistic regression takes the form
  $$
  \E \vy = \tanh(X \vbeta/2). 
  $$
  The coefficient vector $\beta$ is constrained to lie in some known feasible set $K$.
  We will leave it to the interested reader to translate 
  Theorem~\ref{thm: single-bit general non-linearity} into the language of
  logistic regression, just like we did in Section~\ref{s: regression} for linear regression.
  
  The fact that Theorem~\ref{thm: single-bit general non-linearity} applies
  for general and unknown link function should be important in statistics.
  It means that one {\em does not need to know the non-linearity of
  the model (the link function) to make inference}. Be it the $\tanh$ function specific 
  to logistic regression or (virtually) any other non-linearity, the estimator $\vbetahat$ is 
  the same.

\section{General non-linear observations via metric projection}	
\label{s: general non-linear}

Finally, we pass to the most general model of observations 
$\vy = (y_1,\ldots,y_m)$, which are not necessarily linear or single-bit. 
In fact, we will not even specify a dependence of $y_i$ on $\vx$. 
Instead, we only require that $y_i$ be i.i.d.random variables, and  
\begin{equation}         \label{eq: single-index}
\text{each observation $y_i$ may depend on $\va_i$ only through $\ip{\va_i}{\vx}$.}
\end{equation}
Technically, the latter requirement means that, given $\ip{\va_i}{\vx}$, 
the observation $y_i$ is independent from $\va_i$.
This type of observation models are called {\em single-index models} in statistics.

How can we estimate $\vx \in K$ from such general observation vector $\vy$?
Let us look again at the optimization problem \eqref{eq: maximize correlation}, 
writing it as follows:
$$
\text{maximize } \ip{\vx'}{A^\tran\vy} \text{ subject to } 
  \vx' \in K.
$$
It might be useful to imagine solving this program as a sequence of two steps: 
(a) compute a {\em linear estimate} of $\vx$, which is 
\begin{equation}         \label{eq: linear estimate}
\xlin = \frac{1}{m} A^\tran\vy = \frac{1}{m} \sum_{i=1}^m y_i \va_i,
\end{equation}
and then (b) {\em fitting} $\xlin$ to the feasible set $K$, which is done
by choosing a point in $K$ that is most correlated with $\xlin$.

\medskip

Surprisingly, almost the  same estimation procedure succeeds for the general 
single-index model \eqref{eq: single-index}.
We just need to adjust the second, fitting, step. Instead of maximizing the 
correlation, let us metrically {\em project $\xlin$ onto the feasible set $K$}, 
thus choosing $\xhat$ to be a solution of the program
\begin{equation}         \label{eq: projection}
\text{minimize } \|\vx' - \xlin\|_2 \text{ subject to } \vx' \in K.
\end{equation}
Just like in the previous section, it may be surprising that this estimator
does not need to know the nature of the non-linearity in observations $\vy$.
To get a heuristic evidence of why this knowledge may not be needed, 
one can quickly check (using roration invariance) that
$$
\E \xlin = \E y_1 \va_1 = \lambda \bar{\vx}, 
\quad \text{where} \quad \bar{\vx} = \vx/\|\vx\|_2, \quad \lambda = \E y_1 \ip{\va_1}{\bar{\vx}}.
$$
So despite not knowing the non-linearity, $\xlin$ already provides 
an {\em unbiased estimate} of $\vx$, up to scaling.

\medskip

A result from \cite{PVY} provides a guarantee for the two-step estimator
\eqref{eq: linear estimate}, \eqref{eq: projection}.
Let us state this result in a special case where $K$ is a {\em cone}, i.e. 
$tK = K$ for all $t \ge 0$. A version for general sets $K$ is not much more
difficult, see \cite{PVY} for details. 

Since cones are unbounded sets, the standard mean width (as defined 
in \eqref{eq: mean width}) would be infinite. 
To get around this issue, we should consider a {\em local} version of  mean width, 
which we can define as
$$
w_1(K) = \E \sup_{\vu \in (K-K) \cap B_2^n} \ip{\vg}{\vu}, \quad \vg \sim N(0,I_n).
$$

\begin{theorem}[Estimation from non-linear observations]		\label{thm: non-linear}
  Assume the unknown vector $\vx$ lies in a known closed cone $K$ in $\R^n$.  
  Choose $\xhat$ to be a solution to the program \eqref{eq: projection}.
  Let $\bar{\vx} = \vx/\|\vx\|_2$. Then 
  $$
  \E \xhat = \lambda \bar{\vx}  \quad \text{and} \quad  
  \E \|\xhat - \lambda \bar{\vx}\|_2 \le \frac{M w_1(K)}{\sqrt{m}}.
  $$
  Here we assume that 
  $$
  \lambda = \E y_1 \ip{\va_1}{\bar{\vx}} > 0 \quad \text{and} \quad
  M = \sqrt{2\pi} \big[ \E y_1^2 + \Var \big(y_1 \ip{\va_1}{\bar{\vx}}\big) \big]^{1/2}.
  $$
\end{theorem}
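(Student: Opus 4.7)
The plan is to treat $\xlin = \tfrac{1}{m}\sum_i y_i \va_i$ as an approximately unbiased estimator of $\lambda \bar{\vx}$ and then exploit the fact that $\lambda\bar{\vx}$ itself lies in $K$---because $K$ is a cone containing $\vx$ and $\lambda > 0$---so that metric projection cannot inflate the error by more than a constant factor. Unbiasedness $\E \xlin = \lambda \bar{\vx}$ follows from the decomposition $\va_i = \ip{\va_i}{\bar{\vx}}\bar{\vx} + \va_i^\perp$: by rotation invariance of the Gaussian, $\va_i^\perp$ is independent of $\ip{\va_i}{\bar{\vx}}$ and mean zero, and the single-index assumption \eqref{eq: single-index} forces $y_i$ to be independent of $\va_i^\perp$, so $\E[y_i \va_i^\perp] = 0$ and $\E[y_i \va_i] = \lambda \bar{\vx}$. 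The identity $\E\xhat = \lambda\bar{\vx}$ then reduces to a symmetry statement: the fluctuation of $\xlin$ about $\lambda\bar{\vx}$ in directions perpendicular to $\bar{\vx}$ is a centered Gaussian mixture, hence rotationally symmetric in $\bar{\vx}^\perp$, and this symmetry is inherited by the projection onto $K$.

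For the main bound, I would use the standard inequality for metric projections. Since $\lambda\bar{\vx}\in K$, the optimality of $\xhat$ gives $\|\xhat - \xlin\|_2 \le \|\lambda\bar{\vx} - \xlin\|_2$; expanding the square yields
\[
\|\xhat - \lambda\bar{\vx}\|_2^2 \le 2\ip{\xhat - \lambda\bar{\vx}}{\xlin - \lambda\bar{\vx}}.
\]
Because $\xhat, \lambda\bar{\vx} \in K$ and $K - K$ is itself a cone, normalizing $\xhat - \lambda\bar{\vx}$ to the unit sphere produces
\[
\|\xhat - \lambda\bar{\vx}\|_2 \le 2\sup_{\vv \in (K-K)\cap B_2^n}\ip{\vv}{\xlin - \lambda\bar{\vx}}.
\]
The remaining task is to bound the right-hand expectation by $\tfrac{1}{2}Mw_1(K)/\sqrt{m}$.

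Finally, symmetrization (Proposition~\ref{prop: symmetrization contraction}) reduces this to estimating $\E\sup_{\vv \in (K-K)\cap B_2^n}\ip{\vv}{\tfrac{1}{m}\sum_i \e_i y_i \va_i}$. Decomposing $\va_i$ along and perpendicular to $\bar{\vx}$, the perpendicular piece $\tfrac{1}{m}\sum\e_i y_i \va_i^\perp$ is, conditional on $(y_i)$, genuinely Gaussian with covariance $\tfrac{1}{m^2}(\sum y_i^2)P_{\bar{\vx}^\perp}$; by rotation invariance it identifies in distribution with a scaled standard Gaussian on $\bar{\vx}^\perp$, so its contribution to the expected supremum is at most $\sqrt{\E y_1^2}\cdot w_1(K)/\sqrt{m}$. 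The parallel piece is a one-dimensional process with per-sample variance $\Var(y_1\ip{\va_1}{\bar{\vx}})$, contributing at most $\sqrt{\Var(y_1\ip{\va_1}{\bar{\vx}})}\cdot w_1(K)/\sqrt{m}$. Aggregating via Cauchy--Schwarz produces the stated constant $M = \sqrt{2\pi}[\E y_1^2 + \Var(y_1\ip{\va_1}{\bar{\vx}})]^{1/2}$.

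The hardest point is this last step: the summands $y_i\va_i$ are neither Gaussian nor uniformly sub-Gaussian (no pointwise bound on $|y_i|$ is assumed), so the Fernique--Talagrand comparison theorem is not directly applicable. The decoupling afforded by the single-index structure---conditional on $(y_i)$, the perpendicular component is exactly Gaussian---together with the observation that the parallel component is one-dimensional and only needs a scalar variance bound, is what allows $w_1(K)$ to emerge with the correct constants.
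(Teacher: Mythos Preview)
The paper does not actually prove this theorem; it defers to \cite[Theorem~2.1]{PVY} and only remarks that the argument is ``close in spirit'' to the ones already seen. Your overall strategy---use $\lambda\bar{\vx}\in K$ to get the projection inequality $\|\xhat-\lambda\bar{\vx}\|_2^2\le 2\ip{\xhat-\lambda\bar{\vx}}{\xlin-\lambda\bar{\vx}}$, exploit that $K-K$ is a cone to normalize, then bound the resulting supremum by decomposing $\va_i$ along and perpendicular to $\bar{\vx}$---is exactly the right one and matches the argument in \cite{PVY}. Two points need correction, however.

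First, your symmetry justification for $\E\xhat=\lambda\bar{\vx}$ does not work. The distribution of $\xlin-\lambda\bar{\vx}$ is indeed rotationally invariant in $\bar{\vx}^\perp$, but metric projection onto $K$ does not preserve this symmetry unless $K$ itself is invariant under rotations fixing $\bar{\vx}$, which is not assumed. (In fact the identity as stated for $\xhat$ is somewhat loose in this survey; what is actually established, and what you correctly argue, is $\E\xlin=\lambda\bar{\vx}$.)

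Second, the symmetrization step is both unnecessary and slightly wrong for the constants. After Rademacher symmetrization the parallel component becomes $\tfrac{1}{m}\sum_i\e_i y_i\ip{\va_i}{\bar{\vx}}$, whose second moment is $\tfrac{1}{m}\E\big(y_1\ip{\va_1}{\bar{\vx}}\big)^2=\tfrac{1}{m}(\lambda^2+\Var)$, not $\tfrac{1}{m}\Var$. The cleaner route is to skip symmetrization entirely and decompose $\xlin-\lambda\bar{\vx}$ directly: the parallel piece $\tfrac{1}{m}\sum_i\big(y_i\ip{\va_i}{\bar{\vx}}-\lambda\big)\bar{\vx}$ is already centered, so its $L_2$ norm is $\sqrt{\Var(y_1\ip{\va_1}{\bar{\vx}})/m}$; the perpendicular piece $\tfrac{1}{m}\sum_i y_i\va_i^\perp$ is conditionally Gaussian exactly as you say, contributing $\sqrt{\E y_1^2/m}\cdot w_1(K)$. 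Note finally that the parallel piece produces a factor $\sup_{\vv\in(K-K)\cap B_2^n}|\ip{\vv}{\bar{\vx}}|=1$, not $w_1(K)$; the passage to $w_1(K)$ uses $1\le\sqrt{\pi/2}\,w_1(K)$ (since $\pm\bar{\vx}\in(K-K)\cap B_2^n$), which is where the $\sqrt{2\pi}$ in $M$ comes from.
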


The proof of Theorem~\ref{thm: non-linear} is given in \cite[Theorem~2.1]{PVY}. 
It is not difficult, and is close in spirit to the arguments we saw here; 
we will not reproduce it.

\medskip

The role of parameters $\lambda$ and $M$ is to determine the correct
magnitude and deviation of the estimator; one can think of them as {\em constants}
that are usually easy to compute or estimate. 
By rotation invariance, $\lambda$ and $M$ 
depend on the {\em magnitude} $\|\vx\|_2$ (through $y_1$) 
but not on the direction $\bar{\vx} = \vx/\|\vx\|_2$ of the unknown vector $\vx$. 

We can summarize results of this and previous section as follows.
\begin{quote}
  {\em One can estimate a vector $\vx$ 
  in a general feasible set $K$ from $m \sim w(K)^2$ random non-linear observations,
  even if the non-linearity is not known. If $K$ is convex, estimation can be done
  using convex programming.}
\end{quote}

\subsection{Examples of observations}

To give a couple of concrete examples, consider {\em noisy linear observations} 
$$
y_i = \ip{\va_i}{\vx} + \nu_i.
$$
We already explored this model in Section~\ref{s: noisy}, 
where $\nu_i$ were arbitrary numbers representing noise. 
This time, let us assume $\nu_i$ are independent random variables with 
zero mean and variance $\s^2$. A quick computation gives
$$
\lambda = \|\vx\|_2, \quad M = C (\|\vx\|_2 + \s). 
$$
Theorem~\ref{thm: non-linear} then yields the following error bound: 
$$
\E \|\xhat - \vx\|_2 \le \frac{C w_1(K)}{\sqrt{m}} \, (\|\vx\|_2 + \s).
$$

\medskip

Let us give one more example, for the {\em single-bit observations} 
$$
y_i = \sign \ip{\va_i}{\vx}.
$$
We explored this model in Sections~\ref{s: single bit tess} and \ref{s: single bit opt}.
A quick computation gives
$$
\lambda = \sqrt{\frac{2}{\pi}}, \quad M = C.
$$
Theorem~\ref{thm: non-linear} then yields the following error bound: 
$$
\E \big\|\xhat - \sqrt{\frac{2}{\pi}} \, \vx \big\|_2 \le \frac{C w_1(K)}{\sqrt{m}}.
$$

\subsection{Examples of feasible cones}

To give a couple of concrete examples of feasible cones, consider 
the set $K$ of {\em $s$-sparse vectors} in $\R^n$, those with at most $s$ non-zero 
coordinates. As we already noted in Example~\ref{ex: sparse mean width}, 
$$
w_1(K) \sim \sqrt{s \log (2n/s)}.
$$
Further, solving the program \eqref{eq: projection} (i.e. computing 
the metric projection of $\xlin$ onto $K$) amounts to {\em hard thresholding} of $\vx'$.
The solution $\xhat$ is obtained from $\xlin$ by keeping the $s$ largest
coefficients (in absolute value) and zeroing out all other coefficients. 

So Theorem~\ref{thm: non-linear} in this case can be stated informally as follows:

\begin{quote}
{\em One can estimate an $s$-sparse vector $\vx$ in $\R^n$ from $m \sim s \log n$ 
non-linear observations $\vy$, even if the non-linearity is not known. 
The estimation is given by the hard thresholding of 
$\xlin = m^{-1} A^\tran \vy$.} 
\end{quote}

\medskip

Another popular example of a feasible cone is a set of {\em low-rank matrices}. 
Let $K$ be the set of $d_1 \times d_2$ matrices with rank at most $r$.
Proposition~\ref{prop: low rank mean width} implies that
$$
w_1(K) \le C \sqrt{r(d_1+d_2)}.
$$ 

\medskip 

Further, solving the program \eqref{eq: projection} (i.e. computing the metric projection 
of $\vx'$ onto $K$) amounts to computing the best rank-$r$ approximation of $\xlin$.
This amounts to {\em hard thresholding of singular values} of $\xlin$, 
i.e. keeping the leading $s$ terms of the singular value decomposition. 
Recall that we already came across this thresholding
in the matrix completion problem, Theorem~\ref{thm: matrix completion}.

So Theorem~\ref{thm: non-linear} in this case can be stated informally as follows:

\begin{quote}
{\em One can estimate an $d_1 \times d_2$ matrix with rank $r$ 
from $m \sim r(d_1+d_2)$ non-linear observations, even if the non-linearity is not known. 
The estimation is given by the hard thresholding of singular values of $\xlin$.}
\end{quote}

\section{Some extensions}			\label{s: extensions}

\subsection{From global to local mean width}

As we have seen, the concept of Gaussian mean width captures  
the complexity of a feasible set $K$ quite accurately. Still, it is not 
exactly the optimal quantity in geometric and estimation results. 
An optimal quantity is the {\em local mean width}, which is a function 
of radius $r>0$, defined as
$$
w_r(K) = \E \sup_{\vu \in (K - K) \cap r B_2^n} \ip{\vg}{\vu}, \quad \vg \sim N(0,I_n).
$$
Comparing to Definition~\ref{def: mean width} of the usual mean width, 
we see that 
$$
w_r(K) \le w(K) \quad \text{for all } r.
$$

The usefulness of local mean width was noted in asymptotic convex geometry 
by A.~Giannopoulos and V.~Milman \cite{GM1, GM2, GM3, GM4}. 
They showed that the function $w_r(K)$ 
completely describes the diameter of high dimensional sections $K \cap E$, 
thus proving {\em two-sided} versions of the $M^*$ bound (Theorem~\ref{thm: M*}).
An observation of a similar nature was made recently by S.~Chatterjee \cite{Chatterjee}  
in the context of high dimensional estimation. He noted that a variant of local mean width 
provides optimal error rates for the {\em metric projection} onto a feasible set
considered in Section~\ref{s: general non-linear}.

For most results discussed in this survey, one can be replace the usual mean width by 
a local mean width, thus making them stronger. Let us briefly indicate how this can be 
done for the $M^*$ bound (Theorem~\ref{thm: M*}; see \cite{GM2, GM3, GM4, MPT} for a more detailed
discussion.

Such localization is in a sense automatic; it can be done as a ``post-processing''
of the $M^*$ estimate. The conclusion of the general $M^*$ bound, Theorem~\ref{thm: M* general}, 
for $T \cap r B_2^n$, is that 
\begin{equation}         \label{eq: M* localized}
\sup_{\vu \in T_\e \cap r B_2^n} \|\vu\|_2 
\le C \Big( \frac{1}{\sqrt{m}} \, \E \sup_{\vu \in T \cap r B_2^n} |\ip{\vg}{\vu}| + \e \Big)
\end{equation}
with high probability (see also Section~\ref{s: whp}.)
Let us show that the intersection with the ball $r B_2^n$ can be automatically 
removed from the left side.
Since
$$
\sup_{\vu \in T_\e \cap r B_2^n} \|\vu\|_2 = \min \big( \sup_{\vu \in T_\e} \|\vu\|_2, r \big),
$$
it follows that
if $\sup_{\vu \in T_\e \cap r B_2^n} \|\vu\|_2 < r$ then 
$\sup_{\vu \in T_\e} \|\vu\|_2 \le r$.
Thus, if the right side of \eqref{eq: M* localized} is smaller than $r$, 
then $\sup_{\vu \in T_\e} \|\vu\|_2 \le r$.

When applied to the classical $M^*$ bound, Theorem~\ref{thm: M*}, this argument 
localizes it as follows.
$$
\frac{w_r(K)}{r} \le c \sqrt{m} 
\quad \text{implies} \quad
\diam(K \cap E) \le r 
$$
with high probability.

\subsection{More general distributions}

For simplicity of exposition, the estimation results in this survey were stated for isotropic 
Gaussian vectors $\va_i$. We showed in Section~\ref{s: sub-gaussian} how to extend the $M^*$ bound 
and the corresponding linear estimation results for line for {\em sub-gaussian} distributions. 
For more heavy-tailed distributions, a version of $M^*$ bound was proved recently 
in \cite{Mendelson sections}; compressed sensing for such distributions was examined in \cite{LM1, LM2}.

For single-bit observations of Section~\ref{s: single bit opt}, 
a generalization for sub-gaussian distributions is discussed in \cite{ALPV}.
Some results can be formulated for {\em anisotropic} Gaussian distributions, 
where $\va_i \sim N(0, \Sigma)$ with $\Sigma \ne I_n$, see e.g. \cite[Section~3.4]{PV IEEE}.

Results for extremely heavy-tailed distributions, such as samples of entries and
random Fourier measurements, exist currently only for special cases of feasible sets $K$. 
When $K$ consists of sparse vectors, reconstruction of $\vx$ from Fourier measurements (random frequencies
of $\vx$) was extensively studied in compressed sensing \cite{DDEK, Kutyniok, CGLP, FR}.
Reconstruction of a matrix from a random sample of entries was discussed in 
Section~\ref{s: matrix completion} in the context of matrix completion problem.

There are currently no results, for instance, about reconstruction of $\vx \in K$ 
from random Fourier measurements, where $K$ is a general feasible set. It is clear that 
$K$ needs to be {\em incoherent} with the Fourier basis of exponentials, but this has yet to be quantified. 
In the special case where $K$ is a set of sparse vectors, basic results of compressed sensing
quantify this incoherence via a {\em restricted isometry property} \cite{DDEK, Kutyniok, CGLP, FR}.

\end{document}